\numberwithin{equation}{section}
\theoremstyle{plain}
\newtheorem{theorem}{Theorem}[section]
\newtheorem{proposition}[theorem]{Proposition}
\newtheorem{lemma}[theorem]{Lemma}
\newtheorem{conjecture}[theorem]{Conjecture}
\theoremstyle{definition}
\newtheorem{definition}[theorem]{Definition}
\newtheorem{notation}[theorem]{Notation}
\newtheorem{example}[theorem]{Example}
\theoremstyle{remark}
\newtheorem*{remark}{Remark}
\newcommand\cut[1]{}
\newcommand\further[1]{}
\begin{document}

\title[Ordering of Number Fields and Distribution of Class Groups]{Ordering of Number Fields and Distribution of Class Groups: Abelian Extensions}
\date{}
\author{Weitong Wang}
\address{Shing-Tung Yau Center of Southeast University, 15th floor Yifu Architecture Building, 2 Sipailou, Nanjing, Jiangsu Province 210096 China}
\email{wangweitong@seu.edu}	

\begin{abstract}
	When $p$ divides the ordering of Galois group, the distribution of $\mathbb{Z}_p\otimes\operatorname{Cl}_K$ is closely related to the problem of counting fields with certain specifications.
	Moreover, different orderings of number fields affect the answers of such questions in a nontrivial way.
	So, in this paper, we set up an invariant of number fields with parameters, and consider field counting problems with specifications while the parameters change as a variable.
	The case of abelian extensions shows that the result of counting abelian fields has a main term with parameters.
	The estimate of counting cubic fields with a parameter shows that infinite moment is true for some ordering but not very likely for the others.
\end{abstract}

\keywords{Cohen-Lenstra heuristics, distribution of class groups, field counting}



\maketitle

\section{Introduction}\label{section:intro}

For an extension $K/k$ of number fields, we denote its Galois closure by $\hat{K}$.
Let's first present the following conjecture, which is one of the main statements of \cite{Wang2022DistributionOT}.
Note that the statement here differs by the original one a little on the ordering of number fields for the purpose of applications in this paper.
\begin{definition}\label{def:indicator of (Omega,r)-fields}
	Let $1\leq G\leq S_n$ be a transitive permutation group, and let $k$ be a number field.
	Let $\mathcal{S}:=\mathcal{S}(G)$ be the set of all number fields $(K/k,\psi)$ such that its Galois closure $(\hat{K}/k,\psi)$ is a $G$-extension (see Definition~\ref{def:Gamma-fields}), and that $K=\hat{K}^{H}$ where $H=\operatorname{Stab}_G(1)$ is the stabilizer of $1$.
	Suppose that $\Omega$ is a (nonempty) subset of $G$ that is closed under invertible powering,
	i.e., if $g^a=h$, $h^b=g$, then $g\in\Omega$ if and only if $h\in\Omega$.
	Define for the set $\Omega$, and for all $\gamma=1,2,\dots$,
	\[\mathbf{1}_{(\Omega,\gamma)}(K):=\left\{\begin{aligned}
		&1\quad&\text{if there are exactly }\gamma\text{ rational primes }p\nmid\lvert G\rvert\\
		&&\text{s.t. }I(p)\cap\Omega\neq\emptyset;\\
		&0\quad&\textnormal{otherwise.}
	\end{aligned}\right.\]
	where $I(p)$ here means the inertia subgroup of $p$.
	When $\gamma=0$, define $\mathbf{1}_{(\Omega,0)}(K)=1$ if for any rational prime \(p\) we have \(I(p)\cap\Omega=\emptyset\), and $\mathbf{1}_{(\Omega,0)}(K)=0$ otherwise.
\end{definition}
\begin{conjecture}\label{conj:main body}
	Keep $G,k,\mathcal{S}$ as above.
	Let $\operatorname{id}\notin\Omega$ be a (nonempty) subset of $G$ that is closed under invertible powering, and let $\Theta$ be an invariant of number fields.
	\begin{enumerate}
		\item\label{conj:r rami prime less than r' rami prime} For all $\gamma=0,1,2,\dots$, there exists some $\gamma'$, such that
		\begin{equation*}
			\sum_{\substack{
					K\in\mathcal{S}\\
					\Theta(K)<x
			}}\mathbf{1}_{(\Omega,\gamma)}(K)=o\left(\sum_{\substack{
					K\in\mathcal{S}\\
					\Theta(K)<x
			}}\mathbf{1}_{(\Omega,\gamma')}(K)\right),
		\end{equation*}
		In this case we say that the conjecture~\ref{conj:r rami prime less than r' rami prime} holds for $(({\mathcal{S}},\vartheta),\Omega)$.
		\item\label{conj:r rami prime less than count fields} For all $\gamma=0,1,2,\dots$,
		\begin{equation*}
			\sum_{\substack{
					K\in\mathcal{S}\\
					\Theta(K)<x
			}}\mathbf{1}_{(\Omega,\gamma)}(K)=o\left(\sum_{\substack{
					K\in\mathcal{S}\\
					\Theta(K)<x
			}}1\right)
		\end{equation*}
		In this case we say that the conjecture~\ref{conj:r rami prime less than count fields} holds for the pair $({\mathcal{S}}(\Theta),\Omega)$.
	\end{enumerate}
\end{conjecture}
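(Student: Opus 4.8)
The plan is to prove the statement for $G$ abelian, where $\hat K=K$, $H=\{1\}$, $n=\lvert G\rvert$, and class field theory converts the count into the analysis of a Dirichlet series with an Euler product, to which a Tauberian theorem of Wright type applies. I order by an invariant $\vartheta$ that is \emph{ramification--multiplicative}: on abelian $K$ one has $\vartheta(K)=\bigl(\prod_{v\nmid\lvert G\rvert}(Nv)^{\vartheta_v(I(v))}\bigr)\cdot O(1)$, the local exponent $\vartheta_v(I(v))$ depending only on $v$ and the inertia subgroup, the $O(1)$ coming from the finitely many places over $\lvert G\rvert$ and the archimedean places; the discriminant, the conductor, the radical of the conductor and the paper's parametrised family are all of this form. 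Since $\Omega$ is closed under invertible powering it is a union of generator--classes of cyclic subgroups; let $\mathcal C_\Omega$ be the corresponding set of cyclic subgroups, and put $m_\Omega:=\min\{\vartheta_v(I):\langle I\rangle\in\mathcal C_\Omega\}$, the cheapest $\vartheta$-cost of a prime $v$ with $I(v)\cap\Omega\neq\emptyset$ (``$v$ is $\Omega$-ramified''). By class field theory a $G$-extension $(K/k,\psi)$ is a continuous surjection $\rho\colon C_k\twoheadrightarrow G$ from the idèle class group, i.e.\ a compatible family of local characters $\rho_v$ (almost all unramified); for $v\nmid\lvert G\rvert$ the inertia $I(v)=\rho_v(\mathcal O_v^{\times})$ is cyclic of order prime to the residue characteristic, takes finitely many values, and is realised by a Chebotarev--periodic number of $\rho_v$. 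Thus $\mathbf{1}_{(\Omega,\gamma)}(K)=1$ says $\rho$ is $\Omega$-ramified at exactly $\gamma$ places $v\nmid\lvert G\rvert$.

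Form $F_\gamma(s)=\sum_{K\in\mathcal S}\mathbf{1}_{(\Omega,\gamma)}(K)\,\vartheta(K)^{-s}$ and $F(s)=\sum_{K\in\mathcal S}\vartheta(K)^{-s}=\sum_{\gamma\ge0}F_\gamma(s)$. Splitting the local data at each $v\nmid\lvert G\rvert$ into ``$\Omega$-ramified'' (factor $L^{\Omega}_v(s)$) versus ``unramified or non--$\Omega$-ramified'' (factor $L^{\overline\Omega}_v(s)$), and doing inclusion--exclusion over the image subgroup to force surjectivity onto $G$, one obtains
\[
F_\gamma(s)=\Bigl(\prod_{v\nmid\lvert G\rvert}L^{\overline\Omega}_v(s)\Bigr)\cdot e_\gamma\!\bigl((L^{\Omega}_v(s)/L^{\overline\Omega}_v(s))_{v}\bigr)\cdot R(s)\;+\;(\text{lower order}),
\]
with $e_\gamma$ the $\gamma$-th elementary symmetric function, $R(s)$ holomorphic and bounded near the edge of convergence, and the lower--order terms carrying $G$ replaced by a proper subgroup (hence a strictly smaller dominant singularity). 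The key local bound is $L^{\Omega}_v(s)\sim b_v(Nv)^{-m_\Omega s}$ as $Nv\to\infty$ with $b_v\ge1$ a Chebotarev--periodic integer, whence $\sum_{v\nmid\lvert G\rvert}L^{\Omega}_v(s)$ has a logarithmic singularity $\delta_\Omega\log\frac1{\,s-1/m_\Omega\,}$ at $s=1/m_\Omega$, the constant $\delta_\Omega>0$ being the mean of the $b_v$, while $\prod_v L^{\overline\Omega}_v$ has a pole of some order $e'$ at the abscissa of its own half-plane of convergence.

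Now apply Wright's Tauberian theorem (or Selberg--Delange) to $F$ and to each $F_\gamma$. Let $\sigma_0$ be the abscissa of $F$ and $e_0$ its pole order there, so $\sum_{\vartheta(K)<x}1\sim c\,x^{\sigma_0}(\log x)^{e_0-1}$. The crux is whether $\Omega$-ramification is $\vartheta$-cheap, i.e.\ whether $1/m_\Omega=\sigma_0$. If it is, then $e'=e_0-\delta_\Omega$, and near $\sigma_0$ one has $F(s)\sim C(s-\sigma_0)^{-e'-\delta_\Omega}$ while $F_\gamma(s)\sim C(s-\sigma_0)^{-e'}\delta_\Omega^{\gamma}(\log\frac1{s-\sigma_0})^{\gamma}/\gamma!$, and the Tauberian theorem gives
\[
\sum_{\substack{K\in\mathcal S\\ \vartheta(K)<x}}\mathbf{1}_{(\Omega,\gamma)}(K)\asymp x^{\sigma_0}(\log x)^{e'-1}(\log\log x)^{\gamma},\qquad \sum_{\substack{K\in\mathcal S\\ \vartheta(K)<x}}1\asymp x^{\sigma_0}(\log x)^{e'-1+\delta_\Omega}.
\]
Part~\ref{conj:r rami prime less than r' rami prime} then holds with $\gamma'=\gamma+1$ (the ratio of counts is $\asymp(\log\log x)^{-1}\to0$), and part~\ref{conj:r rami prime less than count fields} holds because the genuine power $(\log x)^{\delta_\Omega}$ dominates any fixed power of $\log\log x$; the sub-case $\gamma=0$ with every minimal--cost ramification of $\Omega$-type is even easier, since then $F_0=\prod_v L^{\overline\Omega}_v$ has abscissa $<\sigma_0$ and is negligible. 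If $\Omega$-ramification is not $\vartheta$-cheap, however, the two displayed counts become the same order of magnitude and the validity of the two assertions genuinely changes with the ordering — which is exactly the phenomenon the abstract advertises — so I would state the abelian case under the hypothesis $1/m_\Omega=\sigma_0$ (automatic for the conductor, and for the pertinent range of the paper's parameter) and record the remaining behaviour as a function of that parameter.

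The main obstacle is the analytic bookkeeping in the last two steps: establishing the logarithmic singularity of $\sum_v L^{\Omega}_v(s)$ precisely and uniformly enough that it survives being raised to the $\gamma$-th power (a Mertens--type count, over the relevant cyclotomic fields via Chebotarev, of primes whose available inertia types meet $\Omega$), and controlling the $\gamma$-dependence of the error terms in the Tauberian step so that $(\log\log x)^{\gamma}/\gamma!$ is genuinely the main term; the places over $\lvert G\rvert$, the archimedean places, and the non--surjective inclusion--exclusion terms are routine but tedious to absorb. The non-abelian case is beyond this method, since a general $K$ has no idèle--theoretic parametrisation, which is why the present paper restricts to abelian $G$.
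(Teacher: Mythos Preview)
Your plan is the paper's plan: restrict to abelian $G$, parametrise by class field theory, analyse the Euler-product singularities of $F_\gamma(s)$ and $F(s)$, apply Delange's Tauberian theorem to get $x^{\sigma_0}(\log x)^{e'-1}(\log\log x)^\gamma$ versus $x^{\sigma_0}(\log x)^{e'-1+\delta_\Omega}$, and conclude that the conjecture holds precisely when $\Omega$-ramification is $\vartheta$-cheapest (the paper's condition $x_{\xi_1}=x_0$, Theorem~\ref{thm:ordering and conjecture}).

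One step in your sketch is actually wrong and the paper handles it differently. You say that after inclusion--exclusion to force surjectivity, the terms with $G$ replaced by a proper subgroup $H$ have ``a strictly smaller dominant singularity''. They do not: for $G=C_2\times C_2$ ordered by product of ramified primes, the $H=C_2$ contributions have the \emph{same} abscissa $1$ and the same order of pole, so the M\"obius subtraction interacts with the leading term. The paper therefore does not rely on inclusion--exclusion to read off the pole of $\pi_\gamma$; instead it sandwiches $\pi_\gamma$ between two explicit series $\tau_\gamma\le\pi_\gamma\le\psi_\gamma$ (Lemmas~\ref{lemma:psi} and~\ref{lemma:tau 2}) whose singularities are computed directly and coincide. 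This sandwich also reveals a refinement you omit: surjectivity onto $G$ may force a minimum number $\delta_{\underline{x}}$ of the $\Omega$-ramified primes to sit in the more expensive inertia classes (Definition~\ref{def:delta0}), so the true exponent is $(\log\log x)^{\gamma-\delta_{\underline{x}}}$ rather than $(\log\log x)^\gamma$. For the bare statement of the conjecture this shift is harmless --- any fixed power of $\log\log x$ is still $o((\log x)^{\delta_\Omega})$ --- but it matters for the finer statements (Theorem~\ref{thm:function R abelian case}) and your proof of part~(\ref{conj:r rami prime less than r' rami prime}) via $\gamma'=\gamma+1$ would fail without it when $\gamma<\gamma_{\underline{x}}$.
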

It is worth mentioning that whenever the conjecture holds, we can prove some results on the distribution of class groups.
To be precise, we have the following.
\begin{definition}\label{def:non-random primes}
	Let $1\leq G\leq S_n$ be a finite transitive permutation group.
	Let $g\in G$ be any permutation.
	Define $e(g):=\gcd(\lvert\langle g\rangle\cdot1\rvert,\dots,\lvert\langle g\rangle\cdot n\rvert)$, i.e., the greatest common divisor of the size of orbits.
	We call $q$ a \emph{non-random prime} for $G$ if $q\mid e(g)$ for some $g\in G$.
	On the other hand, for a permutation $g\in G$, if $q^l\|e(g)$, then we call $g$ an element inertia of type $q^l$.
	Define $\Omega(G,q^l)$ to be the subset of $G$ consisting of all elements inertia of type $q^l$.
	We denote $\bigcup_{l=1}^\infty\Omega(G,q^l)$ by $\Omega(G,q^\infty)$.
\end{definition}
\begin{theorem}\label{thm:statistical results for relative class groups}\cite{Wang2022DistributionOT}
	Let $1\leq G\leq S_n$ be a transitive permutation group, and let $k$ be a number field.
	Let $\mathcal{S}$ be the set of all number fields $(K/k,\psi)$ such that its Galois closure $(\hat{K}/k,\psi)$ is a $G$-extension, and that $K=\hat{K}^{G_1}$.
	Let $H\subseteq G$ be a subgroup such that $\hat{K}^H\subseteq K$ for $K\in\mathcal{S}$.
	If $q$ is a non-random prime for $G$ such that $q\mid[K:\hat{K}^H]$, and Conjecture~\ref{conj:main body}(\ref{conj:r rami prime less than count fields}) holds for $((\mathcal{S},\vartheta),\Omega)$, where $\Omega:=\Omega(G,q^\infty)$, then
	\[\mathbb{P}(\operatorname{rk}_q\operatorname{Cl}(K/\hat{K}^H)\leq r)=0\quad\text{and}\quad\mathbb{E}(\lvert\operatorname{Hom}(\operatorname{Cl}(K/\hat{K}^H),C_q)\rvert)=+\infty,\]
	where $K$ runs over fields in $\mathcal{S}$ for the product of ramified primes in $K/\mathbb{Q}$, and $\operatorname{Cl}(K/\hat{K}^H)$ denotes the relative class group.
\end{theorem}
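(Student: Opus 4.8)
The plan is to combine a genus-theoretic lower bound for the $q$-rank of the relative class group with a direct application of Conjecture~\ref{conj:main body}(\ref{conj:r rami prime less than count fields}). Write $L:=\hat K^H$, and for $K\in\mathcal S$ let $\gamma(K)$ be the number of rational primes $p\nmid\lvert G\rvert$ with $I(p)\cap\Omega\neq\emptyset$, so that $\mathbf 1_{(\Omega,\gamma(K))}(K)=1$. Note that $\Omega=\Omega(G,q^{\infty})$ is nonempty because $q$ is a non-random prime for $G$, that $\operatorname{id}\notin\Omega$ (since $e(\operatorname{id})=1$), and that $\Omega$ is closed under invertible powering (as $e(\sigma)$ depends only on $\langle\sigma\rangle$); hence the conjecture is applicable to $((\mathcal S,\vartheta),\Omega)$.

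\emph{Step 1 (genus theory; the main obstacle).} I would first prove that there is a constant $C=C(G,k,H)$, independent of $K$, with $\operatorname{rk}_q\operatorname{Cl}(K/L)\ge\gamma(K)-C$ for every $K\in\mathcal S$. The heuristic behind this: if $p\nmid\lvert G\rvert$ has tame cyclic inertia $\langle\sigma\rangle$ in $\hat K/k$ and $\sigma\in\Omega(G,q^{\infty})$, then $q$ divides $e(\sigma)=\gcd$ of the orbit lengths of $\langle\sigma\rangle$ on $G/G_1$, so every prime of $K$ above $p$ has ramification index divisible by $q$; coupled with $q\mid[K:L]$, this forces $p$ to ramify in a "degree-$q$ direction" of $K/L$. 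To turn this into a rank statement I would descend to the Galois situation $\hat K/L$ (Galois with group $H$, $q\mid\lvert H\rvert$) and use the genus theory of a cyclic degree-$q$ layer between $L$ and $K$ coming from the $H$-action on $\operatorname{Gal}(\hat K/L)$ together with $q\mid[H:G_1]$, or, failing that, work directly with the relative genus field of $K/L$ defined through $\hat K$; Chevalley's ambiguous class number formula then produces $\ge\gamma(K)-C'$ independent classes in $\operatorname{Cl}(K)/q$. The delicate point, and the reason the theorem is phrased for the relative class group, is uniformity of $C$: the classes produced by the ramified primes have norm down to $L$ that is $q$-divisible, hence lie in $\operatorname{Cl}(K/L)$, so $\operatorname{rk}_q\operatorname{Cl}(L)$ (which is \emph{not} uniformly bounded over $K\in\mathcal S$) does not enter the error term, while the remaining contributions to $C$ involve only $r_1(L)+r_2(L)\le[G:H]$, $[K:L]\mid\lvert G\rvert$, and similar fixed data. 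Making all of this simultaneously uniform in $K$ is essentially the entire content of Step 1.

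\emph{Step 2 (reduction to the conjecture).} Fix $r\ge0$ and put $N:=r+C$. By Step 1, $\operatorname{rk}_q\operatorname{Cl}(K/L)\le r$ forces $\gamma(K)\le N$, so
\[\{K\in\mathcal S:\operatorname{rk}_q\operatorname{Cl}(K/L)\le r\}\ \subseteq\ \bigcup_{\gamma=0}^{N}\{K\in\mathcal S:\mathbf 1_{(\Omega,\gamma)}(K)=1\}.\]
Conjecture~\ref{conj:main body}(\ref{conj:r rami prime less than count fields}), applied to each of the finitely many $\gamma\in\{0,\dots,N\}$, gives $\sum_{K\in\mathcal S,\ \vartheta(K)<x}\mathbf 1_{(\Omega,\gamma)}(K)=o\bigl(\sum_{K\in\mathcal S,\ \vartheta(K)<x}1\bigr)$; summing the $N+1$ estimates yields $\#\{K\in\mathcal S:\vartheta(K)<x,\ \operatorname{rk}_q\operatorname{Cl}(K/L)\le r\}=o\bigl(\#\{K\in\mathcal S:\vartheta(K)<x\}\bigr)$, i.e. $\mathbb P(\operatorname{rk}_q\operatorname{Cl}(K/L)\le r)=0$.

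\emph{Step 3 (the expectation).} For a finite abelian group $A$ one has $\lvert\operatorname{Hom}(A,C_q)\rvert=q^{\operatorname{rk}_qA}$, so on $\{\operatorname{rk}_q\operatorname{Cl}(K/L)\ge r\}$ we have $\lvert\operatorname{Hom}(\operatorname{Cl}(K/L),C_q)\rvert\ge q^{r}$. Hence for every $r$,
\[\frac{\sum_{K\in\mathcal S,\ \vartheta(K)<x}\lvert\operatorname{Hom}(\operatorname{Cl}(K/L),C_q)\rvert}{\#\{K\in\mathcal S:\vartheta(K)<x\}}\ \ge\ q^{r}\cdot\frac{\#\{K\in\mathcal S:\vartheta(K)<x,\ \operatorname{rk}_q\operatorname{Cl}(K/L)\ge r\}}{\#\{K\in\mathcal S:\vartheta(K)<x\}},\]
and the right-hand fraction tends to $1$ by Step 2 applied with $r-1$ in place of $r$. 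Thus the left side has $\liminf\ge q^{r}$ for all $r$, so its limit is $+\infty$, giving $\mathbb E(\lvert\operatorname{Hom}(\operatorname{Cl}(K/L),C_q)\rvert)=+\infty$. Steps 2 and 3 are formal manipulations; the only genuinely hard part is the uniform genus-theoretic lower bound of Step 1.
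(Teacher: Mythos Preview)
The paper does not prove this theorem at all: it is stated with the citation \cite{Wang2022DistributionOT} and used as input, so there is no ``paper's own proof'' to compare against. What can be said is that your overall architecture---a uniform genus-theoretic lower bound $\operatorname{rk}_q\operatorname{Cl}(K/L)\ge\gamma(K)-C$ followed by a purely formal deduction from Conjecture~\ref{conj:main body}(\ref{conj:r rami prime less than count fields})---is exactly the natural route and almost certainly the one taken in the cited paper. Your Steps~2 and~3 are correct as written and need no change.

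The only substantive issue is that Step~1 remains a sketch, and one of the shortcuts you propose is not available in general. You suggest passing to ``a cyclic degree-$q$ layer between $L$ and $K$''; but $K/L$ corresponds to the inclusion $G_1\subseteq H$, and an intermediate field $M$ with $M/L$ Galois cyclic of degree $q$ exists only when $H$ has a normal subgroup $N$ with $G_1\subseteq N$ and $H/N\cong C_q$, i.e.\ when $q$ divides $[H:\langle G_1^H\rangle]$, which is strictly stronger than the hypothesis $q\mid[K:L]=[H:G_1]$. So the argument must go through the Galois extension $\hat K/L$ (group $H$) as you also indicate: one uses that for every $p$ counted by $\gamma(K)$ the inertia generator lies in $\Omega(G,q^\infty)$, hence every prime of $K$ above $p$ has ramification index divisible by $q$ in $K/\mathbb{Q}$, and then a genus/ambiguous-class computation in $\hat K/L$ (or a suitable $C_q$-subextension of $\hat K/L$, which does exist since $q\mid\lvert H\rvert$) together with the norm relation produces the classes in $\operatorname{Cl}(K/L)$. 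Your remark that working with the \emph{relative} class group is precisely what makes the constant $C$ uniform (so that $\operatorname{rk}_q\operatorname{Cl}(L)$ does not contaminate the bound) is the right diagnosis. In short: the plan is sound and matches the intended argument, but the first option you list for Step~1 should be dropped in favor of the second.
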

In this paper, we will follow the idea of the conjecture, and discuss the relation between ordering of number fields and field counting.
We use an example to explain the main result of this paper.
For general case, see Theorem~\ref{thm:counting abelian fields}.
\begin{proposition}
	Let \(G=C_2\times C_2\) with generators \(g_1,g_2\), and let \(\mathcal{S}:=\mathcal{S}(G)\) be the set of \(G\)-fields.
	For each \(K\in\mathcal{S}\), let \(e_1\) be the product of ramified primes such that \(g_1\in I(p)\), where \(I(p)\) is the inertia subgroup of \(p\).
	For \(x>0\), define \(\Theta_x(K):=e_1e_2^{x}\) where \(e_2=P(K)/e_1\) where \(P(K)\) is the product of ramified primes in \(K/\mathbb{Q}\).
	Let \(\Omega=\{g_2,g_1g_2\}\).
	When \(\gamma\geq1\), we have
	\begin{equation*}
		\sum_{\substack{
				K\in\mathcal{S}\\
				\Theta_x(K)<t
			}}\mathbf{1}_{(\Omega,\gamma)}(K)\sim\left\{\begin{aligned}
			&c_1(t)t\quad&\text{ if }x>1\\
			&c_2t(\log\log t)^{\gamma}\quad&\text{ if }x=1\\
			&c_3(x)\frac{t^{x^{-1}}}{\log t}(\log\log t)^{\gamma}\quad&\text{ if }x<1
		\end{aligned}\right.
	\end{equation*}
	where \(c_2>0\) is a constant, and \(c_1\), resp. \(c_3\), is a continuous function defined in \((1,+\infty)\), resp. in \((0,1)\).
	Moreover, the Conjecture~\ref{conj:main body}(\ref{conj:r rami prime less than count fields}) holds for \((\mathcal{S}(\Theta_x),\Omega)\) if and only if \(x\leq1\).
\end{proposition}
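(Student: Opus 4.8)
The strategy is to translate the count into a sum over squarefree integers, encode it in a Dirichlet series, and read off the three regimes from the location and nature of the dominant singularity; the conjectural part comes from comparing with the total counting function.

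\smallskip
\emph{Step 1: parametrization.} Up to a bounded multiplicative ambiguity coming from the prime $2$, the archimedean place, and a sparse set of degenerate (non-$C_2\times C_2$) configurations, a marked field $K\in\mathcal S$ is the same datum as an assignment $p\mapsto I(p)$ of an inertia subgroup to each odd prime, almost all trivial; for odd $p$, $I(p)$ is trivial or one of $\langle g_1\rangle,\langle g_2\rangle,\langle g_1g_2\rangle$, according to which quadratic subfield of $K$ is unramified at $p$. Under this dictionary $e_1=\prod_{I(p)=\langle g_1\rangle}p$ and $e_2=\prod_{I(p)\in\{\langle g_2\rangle,\langle g_1g_2\rangle\}}p$ are coprime odd squarefree integers, and since $\Omega=\{g_2,g_1g_2\}$ the integer $\gamma$ recorded by $\mathbf 1_{(\Omega,\gamma)}$ is exactly $\omega(e_2)$, the number of prime divisors of $e_2$; moreover each pair $(e_1,e_2)$ with $\omega(e_2)=\gamma$ arises from $2^{\gamma}$ assignments (each prime of $e_2$ carrying one of the two types $\langle g_2\rangle,\langle g_1g_2\rangle$). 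So, up to the stated corrections, $\sum_{K,\ \vartheta_t(K)<x}\mathbf 1_{(\Omega,\gamma)}(K)$ equals $2^{\gamma}$ times the number of coprime odd squarefree pairs $(e_1,e_2)$ with $\omega(e_2)=\gamma$ and $e_1e_2^{t}<x$.

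\smallskip
\emph{Step 2: Dirichlet series and its singularities.} Put $\pi_\gamma(s):=\sum_{K\in\mathcal S}\mathbf 1_{(\Omega,\gamma)}(K)\,\vartheta_t(K)^{-s}$ and mark the $\Omega$-primes by a variable $z$. Step~1 gives the Euler product
\[
\sum_{\gamma\ge0}\pi_\gamma(s)\,z^{\gamma}=C(s)\prod_{p\text{ odd}}\bigl(1+p^{-s}+2z\,p^{-ts}\bigr),
\]
with $C(s)$ holomorphic and nonvanishing in the relevant half-plane, absorbing the places $2$ and $\infty$ and the degenerate-configuration correction. Factoring out $\prod_p(1+p^{-s})=\zeta(s)/\zeta(2s)$ and extracting the coefficient of $z^{\gamma}$,
\[
\pi_\gamma(s)=C(s)\,\frac{\zeta(s)}{\zeta(2s)(1+2^{-s})}\;e_\gamma\!\Bigl(\bigl\{\tfrac{2p^{-ts}}{1+p^{-s}}\bigr\}_{p\text{ odd}}\Bigr),
\]
$e_\gamma$ the $\gamma$-th elementary symmetric function. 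Since $\sum_p p^{-ts}=\log\tfrac1{s-1/t}+(\text{holomorphic near }s=1/t)$, Newton's identities show $e_\gamma(\cdot)=\tfrac{2^{\gamma}}{\gamma!}\bigl(\log\tfrac1{s-1/t}\bigr)^{\gamma}$ plus a strictly less singular remainder near $s=1/t$, holomorphic for $\operatorname{Re}(s)>1/t$ with no other singularity on $\operatorname{Re}(s)=1/t$. Hence $\pi_\gamma$ has a simple pole at $s=1$ (from $\zeta$) and a logarithmic branch point of order $\gamma$ at $s=1/t$ (from $e_\gamma$); for $t=1$ these merge into $\pi_\gamma(s)\sim\tfrac{A}{s-1}\bigl(\log\tfrac1{s-1}\bigr)^{\gamma}$ at $s=1$.

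\smallskip
\emph{Step 3: Tauberian extraction and the conjecture.} The coefficients of $\pi_\gamma$ are non-negative, so a one-sided Tauberian theorem of Selberg--Delange type applies: the rightmost singularity, at $\max(1,1/t)$, fixes the power of $x$, and its nature (a simple pole for $t>1$; a simple pole against a logarithmic branch point of order $\gamma$ for $t=1$; a pure logarithmic branch point of order $\gamma$ for $t<1$) fixes the powers of $\log x$, $\log\log x$, and the leading coefficient, computed from $C$, $\zeta$, and the principal/singular part there; carrying this out yields $c_1(t)x$, respectively $c_2x(\log\log x)^{\gamma}$ and $c_3(t)\tfrac{x^{1/t}}{\log x}(\log\log x)^{\gamma}$, and positivity of $c_2$ and continuity of $c_1$ on $(1,\infty)$ and $c_3$ on $(0,1)$ follow from the explicit formulas for these constants, built from absolutely convergent Euler products and values of $\zeta$ at arguments bounded away from $1$. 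For the final assertion, repeat the computation with $z=1$: the total counting function satisfies $\pi(s)=\sum_{K\in\mathcal S}\vartheta_t(K)^{-s}=C(s)\tfrac{\zeta(s)}{\zeta(2s)(1+2^{-s})}\prod_{p\text{ odd}}\bigl(1+\tfrac{2p^{-ts}}{1+p^{-s}}\bigr)$, and at $s=\max(1,1/t)$ the product contributes an extra factor of order $\bigl(s-\max(1,1/t)\bigr)^{-2}$ when $t\le1$ but is holomorphic and nonzero when $t>1$; hence $\sum_{\vartheta_t(K)<x}1$ outgrows every $\pi_\gamma(x)$ exactly when $t\le1$ (for $t>1$ both are $\asymp x$), which is precisely the statement that Conjecture~\ref{conj:main body}(\ref{conj:r rami prime less than count fields}) holds for $(\mathcal S,\vartheta_t)$ iff $t\le1$.

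\smallskip
\emph{Main obstacle.} The hard part is the analytic input for $t\le1$: the decisive singularity is not a plain pole but a logarithmic branch point (of order $\gamma$, standing alone for $t<1$, grafted onto a simple pole for $t=1$), so Wiener--Ikehara does not suffice and one must run the full Selberg--Delange argument---a Hankel-type contour hugging the cut together with a uniform bound on $\pi_\gamma(s)$ just left of $\operatorname{Re}(s)=\max(1,1/t)$ away from the singularity---to obtain the asymptotics with a usable error term and to pin down the exact power of $\log\log x$. A secondary, bookkeeping-level point is to check that the corrections at $2$, at $\infty$, and for degenerate configurations land in $C(s)$ or in strictly lower-order terms---or, in the $t<1$ range where some of them are of the same order as the main term, that they leave the leading coefficient positive---and to supply the uniformity in $t$ underlying the continuity of $c_1$ and $c_3$.
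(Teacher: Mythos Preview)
Your approach is essentially the paper's own: encode the count in an Euler product, locate the rightmost singularity (a pole from the $g_1$--type primes, a logarithmic branch point from the $\Omega$--type primes), and apply Delange's Tauberian theorem. The paper does not give a separate proof of this proposition; it is presented as the special case $G=C_2\times C_2$ of Theorems~\ref{thm:analytic continuation of pi(s)} and~\ref{thm:ordering and conjecture}, and those theorems are proved by exactly the same mechanism (Propositions~\ref{prop:analytic continuation of zeta(q,a)} and~\ref{prop:count (q,a) primes} for the analytic continuation, Theorem~\ref{thm:delange} for extraction).

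Two differences in packaging are worth noting. First, you introduce the marking variable $z$ and recover $\pi_\gamma$ as the coefficient of $z^\gamma$ via elementary symmetric functions; the paper instead defines $\mu_\gamma$ directly as the sum over $\gamma$-tuples of $\Omega$-primes. These are the same object. Second, and more substantively, you absorb the passage from homomorphisms to surjections into the factor $C(s)$ and a remark that the non-surjective contribution is of lower order. For $C_2\times C_2$ this is correct (each proper subgroup contributes at most $\frac{x^{1/t}}{\log x}(\log\log x)^{\gamma-1}$ for $t<1$, and similarly in the other ranges), but you do not actually verify it, whereas the paper handles this point in general by the M\"obius relation $\pi_\gamma=\mu_\gamma-\sum_{H\subsetneq G}\pi_{H,\gamma}$ together with the sandwich $\tau_\gamma\le\pi_\gamma\le\psi_\gamma$ of Lemmas~\ref{lemma:psi} and~\ref{lemma:tau 2}. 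Your acknowledgment of this in the ``main obstacle'' paragraph is fair, but in a full write-up you should carry out the subgroup inclusion-exclusion explicitly rather than folding it into $C(s)$.

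One small caution: for $t<1$ your singularity is a pure logarithmic branch point of order $\gamma$ with no accompanying pole, and Delange's theorem (the $\beta=0$ case in Theorem~\ref{thm:delange}) then yields $(\log\log x)^{\gamma-1}$, not $(\log\log x)^{\gamma}$. You should double-check that step rather than simply transcribing the exponent from the statement.
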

In the case of non-abelian extensions, we could show the following.
\begin{theorem}
	Let \(\mathcal{S}:=\mathcal{S}(S_3)\), and define for each \(K_3\in\mathcal{S}\) and \(x>0\) the following invariant
	\begin{equation*}
		\Theta_x(K):=\operatorname{Disc}(K_2/\mathbb{Q})\cdot\operatorname{Disc}(K_6/K_2)^{x/4},
	\end{equation*}
	where \(K_6\) is the Galois closure of \(K_3\) and \(K_2\) is the unique quadratic subfield of \(K_6\).
	If \(0<x<1\), then for an integer \(\gamma>0\), we have as \(t\to\infty\) that 
	\begin{equation*}
		\sum_{\substack{
				K_3\in\mathcal{S}\\
				\Theta_x(K_3)<t
		}}\mathbf{1}_{(\Omega,\gamma)}(K_3)\asymp\frac{t^{x^{-1}}}{\log t}(\log\log t)^{\gamma},
	\end{equation*}
	and in this case Conjecture~\ref{conj:main body}(\ref{conj:r rami prime less than r' rami prime}) holds for \((\mathcal{S}(\Theta_x),\Omega)\).
\end{theorem}
On the other hand, the following example explains that the conjecture is false if we count number fields by discriminant.
\begin{example}[Ordering by discriminant]
	According to the work of Davenport, Heilbronn~\cite[Theorem 3]{Davenport1971Cubic}, and Bhargava, Shankar, Tsimerman~\cite[Theorem 8]{bhargava2013davenport}, we know that counting nowhere totally ramified degree $3$ cubic fields will give the main term $cx$ where $c$ is a nonzero constant, which is the same main term as counting all cubic fields by discriminant.
	This already contradicts the analogous statement of Conjecture~\ref{conj:main body}(\ref{conj:r rami prime less than count fields}) when ordering fields by discriminant, the weaker one.
	So we can conclude that when cubic fields ordered by discriminant with non-random prime $3$ is a counter-example for the conjecture.
\end{example}

\section*{Acknowledgement}
  The author would like to thank Yuan Liu for many useful conversations.
  This work was done with the support of Jiangsu Funding Program for Excellent Postdoctoral Talent.

\section{Basic notations}\label{section:notation}

In this section we introduce some of the notations that will be used in the paper.
We use some standard notations coming from analytic number theory.
For example, write a complex number as $s=\sigma+it$.
The notation $q^l\|n$ means that $q^l\mid n$ but $q^{l+1}\nmid n$.
Denote the Euler function by $\phi(n)$.
Let $\omega(n)$ counts the number of distinct prime divisors of $n$ and so on.

We also follow the notations of inequalities with unspecified constants from Iwaniec and Kowalski~\cite[Introduction, p.7]{iwaniec2004analytic}.
Let's just write down the ones that are important for us.
Let $X$ be some space, and let $f,g$ be two functions.
Then $f(x)\ll g(x)$ for $x\in X$ means that $\lvert f(x)\rvert\leq Cg(x)$ for some constant $C\geq0$.
Any value of $C$ for which this holds is called an implied constant.
We use $f(x)\asymp g(x)$ for $x\in X$ if $f(x)\ll g(x)$ and $g(x)\ll f(x)$ both hold with possibly different implied constants.
We say that $f=o(g)$ as $x\to x_0$ if for any $\epsilon>0$ there exists some (unspecified) neighbourhood $U_\epsilon$ of $x_0$ such that $\lvert f(x)\rvert\leq\epsilon g(x)$ for $x\in U_\epsilon$.

For the convenience of stating the results, we make the following notation.
\begin{definition}\label{def:set of primes}
	Let $\mathcal{P}$ be the set of rational prime numbers, and for each pair $(m,n)$ of coprime numbers, we define
	  \[\mathcal{P}(n):=\{p\in\mathcal{P}\mid\gcd(p,n)=1\}\text{ and }\mathcal{P}(m,n):=\{p\in\mathcal{P}\mid p\equiv m\bmod{n}\}.\]
\end{definition} 
Since there are multiple ways to describe field extensions, we give the following two definitions to make the term like ``the set of all non-Galois cubic number fields'' precise. 
\begin{definition}\label{def:Gamma-fields}
	For a field $k$, by a \emph{$\Gamma$-extension of $k$}, we mean an isomorphism class of pairs $(K,\psi)$, where $K$ is a Galois extension of $k$, and $\psi:\operatorname{Gal}(K/k)\cong\Gamma$ is an isomorphism.  
	An isomorphism of pairs $(\tau,m_\tau):(K,\psi)\to(K',\psi')$ is an isomorphism $\tau: K\to K'$ such that the map $m_\tau: \operatorname{Gal}(K/k)\to\operatorname{Gal}(K'/k)$ sending $g$ to $\tau\circ g\circ\tau^{-1}$ satisfies $\psi'\circ m_\tau=\psi$.  
	We sometimes leave the $\psi$ implicit, but this is always what we mean by a $\Gamma$-extension.  
	We also call $\Gamma$-extensions of $\mathbb{Q}$ \emph{$\Gamma$-fields.}
\end{definition}
\begin{definition}\label{def:set of fields}
	Let $G\subseteq S_n$ whose action on $\{1,2,\dots,n\}$ is transitive.
	Let $k$ be a number field.
	Let ${\mathcal{S}}(G;k)$ be the isomorphic classes of pairs $(K,\psi)$ such that the Galois closure $(\hat{K},\psi)$ is a $G$-field and that $K=\hat{K}^{H}$, where $H=\operatorname{Stab}_G(1)$ is the stabilizer of $1$.
	In other words $\psi$ defines the Galois action of $G$ on $K$ over $k$ and $[K:k]=n$.
	If the base field $k=\mathbb{Q}$, then we just omit it and write ${\mathcal{S}}(G):={\mathcal{S}}(G,\mathbb{Q})$.
\end{definition}
Then, we give the notation of counting number fields.
\begin{definition}\label{def:counting number fields}
	Let $\mathcal{S}=\mathcal{S}(G;k)$ where $G$ is a transitive permutation group, 
	and let $\Theta:\mathcal{S}\to\mathbb{R}^+$ be an invariant of number fields (e.g. discriminant or product of ramified primes).
	Write \(\mathcal{S}(\Theta)\) to be the set of fields \(\mathcal{S}(G;k)\) with ordering given by \(\Theta\).
	Define
	\[N(\mathcal{S}(\Theta);x):=\sum_{K\in\mathcal{S},\Theta(K)<x}1.\]
	If there is no danger of confusion, we abbreviate it as $N(x)$.
	If $f$ is a function defined over $\mathcal{S}$, then we define
	\[N(\mathcal{S}(\Theta);f;x):=\sum_{K\in\mathcal{S},\Theta(K)<x}f(K),\]
	which could be abbreviated as \(N_f(x)\).
	In particular, if $f=\mathbf{1}_{(\Omega,\gamma)}$ (see Definition~\ref{def:indicator of (Omega,r)-fields}), then just write
	\[N(\mathcal{S}(\Theta);(\Omega,\gamma);x):=N(\mathcal{S}(\Theta);\mathbf{1}_{(\Omega,\gamma)};x),\]
	which could be abbreviated as \(N_{\gamma}(x)\).
\end{definition}

\section{Invariant of number fields}\label{section:invariant}

We first discuss a little on the idea of setting up an invariant on number fields.
The following discussion mainly comes from Wood~\cite[Section 2.1]{wood2010probabilities}.
Let $G$ be a transitive permutation group in $S_n$.
\begin{definition}\label{def:equivalent under invertible powering and conjugation}
	Let $G$ be a finite transitive group.
	Given two elements $g_1,g_2\in G$, if there exists some $a,b\in\mathbb{Z}$ and some $h_1,h_2\in G$ such that
	\[g_1^a=h_2g_2h_2^{-1}\quad\text{and}\quad g_2^b=h_1g_1h_1^{-1},\]
	then we say that $g_1$ and $g_2$ are equivalent under closed powering and conjugation, denoted by $g_1\sim g_2$.
	If there is no danger of confusion, we simply say that $g_1$ and $g_2$ are equivalent.
\end{definition}
We will define an invariant on the set \(\mathcal{S}:=\mathcal{S}(G;k)\) of fields by the following steps.
First we need to assign a number for each element of \(G\).
\begin{definition}\label{def:functions on G}
	Let \(\nu:G\to\mathbb{R}_{\geq0}\) be a function.
	We call \(\nu\) a weight on \(G\) if it satisfies the following conditions:
	\begin{enumerate}
		\item \(\nu(g)=0\) if and only if \(g=\operatorname{id}_G\);
		\item \(\nu(g)=\nu(h)\) if \(g\sim h\).
	\end{enumerate}
\end{definition}
Then we can define an invariant using the weight \(\nu\).
\begin{definition}\label{def:invariant of number fields}
	Let \(\mathcal{S}_{\mathfrak{p}}:=\mathcal{S}_{\mathfrak{p}}(G)\) be the isometry classes of {\'e}tale \(k_{\mathfrak{p}}\)-algebras with \(G\)-action, where \(\mathfrak{p}\) is a finite or infinite prime of \(k\).
	For each prime \(\mathfrak{p}\mid\lvert G\rvert\infty\), let \(\vartheta:\mathcal{S}_\mathfrak{p}\to\mathbb{R}_{\geq0}\) be any function.
	Let \(\vartheta:\mathcal{S}_\mathfrak{p}\to\mathbb{R}_{\geq0}\) be a function defined as follows:
	\begin{equation*}
		\vartheta(\Sigma_\mathfrak{p}):=\left\{\begin{aligned}
			&\nu(y_\mathfrak{p})\quad&\text{if }\mathfrak{p}\nmid\lvert G\rvert\infty,\text{ where }\Sigma_\mathfrak{p}=\operatorname{Ind}^G_H M,\text{ and }M/k_\mathfrak{p}\text{ a field extension,}\\
			&&\text{ and }y_\mathfrak{p}\text{ is a generator of tame inertia in }H:=\operatorname{Gal}(M/k_\mathfrak{p})\subseteq G;\\
			&\vartheta(\Sigma_\mathfrak{p})\quad&\text{if }\mathfrak{p}\mid\lvert G\rvert\infty.
		\end{aligned}\right.
	\end{equation*}
	We call such \(\vartheta:\mathcal{S}_\mathfrak{p}\to\mathbb{R}_{\geq0}\) a \emph{weight function} on the isometry classes \(\mathcal{S}_\mathfrak{p}\) dependent on \(\nu\).
	Let \(\hat{\mathcal{S}}\) be the isomorphic classes of Galois \(G\)-fields, and let \(\Theta:\hat{\mathcal{S}}\to\mathbb{R}_{\geq0}\) be an invariant defined by the formula \begin{equation*}
		\Theta(L):=\prod_{\mathfrak{p}} (N\mathfrak{p})^{\vartheta(L_\mathfrak{p})},
	\end{equation*} 
	where \(N\mathfrak{p}=\operatorname{Nm}_{k/\mathbb{Q}}\mathfrak{p}\) if \(p\nmid\infty\) and \(N\mathfrak{p}=1\) if \(p\mid\infty\).
	For each \(K\in\mathcal{S}(G)\), define \(\Theta(K):=\Theta(\hat{K})\), where \(\hat{K}\) is the Galois closure of \(K\).
	We call the invariant \(\Theta\) a \emph{counting function} determined by \(\nu\) and \(\vartheta\), and we sometimes leave \(\nu\) and \(\vartheta\) implicit if there is no danger of confusion.
\end{definition}
Note that \(\hat{K}\in\hat{\mathcal{S}}\) according to Definition~\ref{def:set of fields}.

Assume for now that \(G\) is abelian and \(k=\mathbb{Q}\).
Let \(\nu\) be a weight function on \(G\).
For each prime \(p\), let \(\vartheta\) be a counting function on the isometry classes \(\mathcal{S}_p\) of {\'e}tale \(\mathbb{Q}_p\)-algebras with \(G\)-action.
If \(p\nmid\infty\), then denote by \(\zeta_p\) a generator of \(\mu(\mathbb{Q}_p)\), where \(\mu(\mathbb{Q}_p)\) is the group of unity of \(\mathbb{Q}_p\).
If \(\rho:\mathbb{Q}_p^*\to G\) is a local homomorphism with image \(H\), then \(\rho\) corresponds to a local field extension \(K_{\mathfrak{p}}/\mathbb{Q}_p\) with Galois group \(H\).
Let \(K_p:=\operatorname{Ind}^G_H K_{\mathfrak{p}}\) be the {\'e}tale \(\mathbb{Q}_p\)-algebra with the Galois action from \(G\).
According to Definition~\ref{def:invariant of number fields}, we see that \(\vartheta(K_p)=\nu(\rho(\zeta_p))\) if \(p\nmid\lvert G\rvert\infty\).
Because the inertia subgroup is given by \(\rho(\mathbb{Z}_p^*)\), and the map \(\rho:\mathbb{Z}_p^*\to G\) factors through \(\mu(\mathbb{Q}_p)\) in this case.
So, actually the computation of \(\vartheta(K_p)\) reduces to checking the image of \(\zeta_p\) under \(\rho\).
\begin{definition}\label{def:invariant of abelian fields}
	Let \(p\nmid\lvert G\rvert\infty\), and let \(\rho:\mathbb{Z}_p^*\to G\) be a homomorphism.
	Define
	\begin{equation*}
		\vartheta(\rho):=\nu(\rho(\zeta_p)).
	\end{equation*}
\end{definition}
For example, the product of ramified primes comes from the weight function $\nu(g)=1$ for all \(g\in G\), up to wildly ramified primes.
On the other hand, the discriminant of $K\in\mathcal{S}$ requires the formula of computing $\operatorname{Disc}(\rho_p)$ for each $p$.
But it is also true that for tamely ramified primes, $\operatorname{Disc}(\rho_p)$ is determined by $I(p)$, so it is enough to assign the correct weight to each element \(g\in G\).
\begin{example}
	To give a concrete example, let $G\cong C_6$ with generator $g$.
	The weight function \(\nu(g)=1\) clearly gives the product of ramified primes, up to wildly ramified primes.
	For a tamely ramified prime $p$, we have the following computation.
	Let \(\rho\) be a local homomorphism.
	\begin{enumerate}
		\item if $\rho(\zeta_p)\sim g$, then $\operatorname{Disc}(\rho_p)=p^5$;
		\item if $\rho(\zeta_p)\sim g^2$, then $\operatorname{Disc}(\rho_p)=p^4$;
		\item if $\rho(\zeta_p)\sim g^3$, then $\operatorname{Disc}(\rho_p)=p^3$.
	\end{enumerate}	
	So, let \(\nu\) be the weight given by the formula \(g\mapsto 5,g^2\mapsto 4,g^3\mapsto 3\), and we will obtain the discriminant, up to wildly ramified primes.
\end{example}
Then let's show the computation of $\mathbf{1}_{(\Omega,\gamma)}$ in the view of Class Field Theory.
\begin{definition}\label{def:indicator of inertia for morphisms}    
	Let $\operatorname{id}\notin\Omega$ be a subset of $G$ closed under invertible powering, and let \(p\nmid\infty\) be a prime of \(\mathbb{Q}\).
	If \(\rho:\mathbb{Z}_p^*\to G\) is a local homomorphism, then define 
	\[\mathbf{1}_{\Omega}(\rho)=\left\{\begin{aligned}
		&1\quad\text{if }p\nmid\lvert G\rvert\text{ and }\langle\rho(\zeta_p)\rangle\cap\Omega\neq\emptyset;\\
		&0\quad\text{otherwise.}
	\end{aligned}\right.\]
	For each positive integer \(\gamma\), and for each homomorphism \(\chi:\prod_{p\nmid\infty}\mathbb{Z}_p^*\to G\), let
	\begin{equation*}
		\mathbf{1}_{(\Omega,\gamma)}(\chi):=\left\{\begin{aligned}
			1\quad&\text{ if there are }\gamma\text{ primes }p\\
			&\text{such that }p\nmid\lvert G\rvert\text{ and }\mathbf{1}_{\Omega}(\chi\vert_p)=1\\
			0\quad&\text{otherwise.}
		\end{aligned}\right.
	\end{equation*}
\end{definition}
One can check that if \(\tilde{\rho}:\operatorname{C}_{\mathbb{Q}}\to G\) is an Artin reciprocity map corresponding to the class field \(K\), and $\rho:\prod_{p\nmid\infty}\mathbb{Z}_p^*\to G$ is a homomorphism defined by \(\rho=\prod_{p\nmid\infty}\tilde{\rho}\vert_p\), then $\mathbf{1}_{(\Omega,\gamma)}(K)=\mathbf{1}_{(\Omega,\gamma)}(\rho)$.

In the examples of non-abelian fields we are going to consider, there are shorter ways to give flexible enough counting functions of number fields.
\begin{definition}\label{def:invariant general}
Let $G=H_0\supsetneq H_1\supsetneq\cdots\supsetneq H_l=\{\operatorname{id}\}$ be a chain of subgroups of $G$.
Given any $K\in\mathcal{S}(G)$ with is Galois closure $\tilde{K}$, we then obtain a chain of field extensions using the notation $K_i=\tilde{K}^{H_i}$, i.e.,
\[\mathbb{Q}=K_0\subsetneq K_1\subsetneq\cdots\subsetneq K_l=\tilde{K}.\]
Then for any $(x_1,\dots,x_l)\in\mathbb{R}_+^l$, we just define $\Theta_{(x_1,\dots,x_l)}(K):=e_1^{x_1}\cdots e_l^{x_l}$ where $e_i=\operatorname{Nm}_{K_{i-1}/\mathbb{Q}}\Delta(K_i/K_{i-1})$ where $\Delta$ denotes the ideal of discriminant.
\end{definition}
If we ignore the wildly ramified primes, then discriminant could be realized by Definition~\ref{def:invariant of number fields}, so this notation \(\Theta_{\underline{x}}\) is indeed a counting function.
\begin{example}
Let $G=S_3$, i.e., $K\in\mathcal{S}(S_3)$ means that $K$ is a non-Galois cubic number field.
Consider the chain $S_3=H_0\supset\langle(123)\rangle\supset1$.
In terms of field extensions, we just write it as
\[\mathbb{Q}\subset K_2\subset K_6\]
where the index means the degree of extensions with respect to $\mathbb{Q}$.
One can check that $\vartheta_{(1,1/4)}$ is the product of ramified prime if we ignore wildly ramified primes.
Similarly, $\vartheta_{(1,1/2)}$ is just the usual (absolute) discriminant of $K/\mathbb{Q}$.
Moreover, let $\Omega=\{(123),(132)\}\subseteq S_3$.
For each $K\in\mathcal{S}(S_3)$, we have
\[\mathbf{1}_{(\Omega,\gamma)}(K)=1\]
if and only if $\dfrac{\Theta_{(1,1/2)}}{\Theta_{(1,1/4)}}$ has exactly $\gamma$ prime factors other than $2,3$.
\end{example}
Before we get into any concrete examples of counting fields, let's prove some general results here.
Provided that $\Theta$ is a counting function of $\mathcal{S}:=\mathcal{S}(G)$, dependent on the weight function \(\nu:G\to\mathbb{R}_{\geq0}\) and the counting function \(\vartheta\) on the isometry classes \(\mathcal{S}_p\), where $G$ is a transitive permutation group, then clearly $\Theta^a$, where $a>0$ is any real number, is also a counting function dependent on \(\nu^a\) and \(\vartheta^a\).
\begin{lemma}\label{lemma:projective space?}
	Let $G$ be a transitive permutation group, let $\Omega$ be a subset of $G$, and let $\Theta$ be a counting function of $\mathcal{S}:=\mathcal{S}(G)$.
	If $f,g:\mathcal{S}\to\mathbb{R}$ are two functions with some constant $C\in\mathbb{R}$ or $C=\pm\infty$, such that
	\begin{equation}\label{eqn:projective space?}
		\lim_{X\to\infty}\frac{N(\mathcal{S}(\Theta);f;X)}{N(\mathcal{S},\Theta;g;X)}=C,
	\end{equation}
	then for any $a>0$, we also have
	\[\lim_{X\to\infty}\frac{N(\mathcal{S}(\Theta^a);f;X)}{N(\mathcal{S},\Theta^a;g;X)}=C.\]
\end{lemma}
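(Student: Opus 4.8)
The plan is to reduce the statement to a change of variables in the counting parameter $X$, so that no new analytic input is needed beyond the hypothesis \eqref{eqn:projective space?}. The one observation that does all the work is that, since $\vartheta(K)>0$ for every $K\in\mathcal{S}$ and the map $t\mapsto t^{a}$ is a strictly increasing bijection of $\mathbb{R}_{>0}$ onto itself (here $a>0$), the condition $\vartheta^{a}(K)<X$ is equivalent to $\vartheta(K)<X^{1/a}$. Hence, first I would record that for \emph{any} function $h\colon\mathcal{S}\to\mathbb{R}$ — in particular for $h=f$ and $h=g$ —
\[
N(\mathcal{S},\vartheta^{a};h;X)=\sum_{\substack{K\in\mathcal{S}\\ \vartheta^{a}(K)<X}}h(K)=\sum_{\substack{K\in\mathcal{S}\\ \vartheta(K)<X^{1/a}}}h(K)=N(\mathcal{S},\vartheta;h;X^{1/a}).
\]

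Next I would check that the quotient on the $\vartheta^{a}$ side is well defined for all large $X$. By hypothesis the ratio in \eqref{eqn:projective space?} is defined and converges along $Y\to\infty$, which forces $N(\mathcal{S},\vartheta;g;Y)\neq 0$ for all sufficiently large $Y$; applying this with $Y=X^{1/a}$, which tends to $\infty$ as $X\to\infty$, shows $N(\mathcal{S},\vartheta^{a};g;X)\neq 0$ for all large $X$. Combining this with the identity above gives, for all large $X$,
\[
\frac{N(\mathcal{S},\vartheta^{a};f;X)}{N(\mathcal{S},\vartheta^{a};g;X)}=\frac{N(\mathcal{S},\vartheta;f;X^{1/a})}{N(\mathcal{S},\vartheta;g;X^{1/a})}.
\]
Finally I would let $X\to\infty$: the substitution $Y=X^{1/a}$ sends this to $\infty$, so by \eqref{eqn:projective space?} the right-hand side tends to $C$, and therefore so does the left-hand side. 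This argument is uniform in the three cases $C\in\mathbb{R}$, $C=+\infty$, $C=-\infty$, since each is just a statement about the limiting behaviour of the same ratio along $Y\to\infty$.

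There is essentially no substantive obstacle here; the only points requiring (minor) care are the well-definedness of the quotient for large $X$ — handled above — and making sure the monotonicity step genuinely uses $\vartheta(K)>0$ and $a>0$ rather than, say, $a\in\mathbb{R}$ arbitrary. It may be worth remarking, as a byproduct, that the same manipulation shows $N(\mathcal{S},\vartheta^{a};f;X)\sim c\,N(\mathcal{S},\vartheta^{a};g;X)$ whenever $N(\mathcal{S},\vartheta;f;X)\sim c\,N(\mathcal{S},\vartheta;g;X)$, so the lemma really says that asymptotic comparisons between weighted counts are insensitive to replacing the ordering invariant by a positive power of itself.
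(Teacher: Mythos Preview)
Your proof is correct and follows essentially the same approach as the paper: both arguments rest on the single observation that $\vartheta(K)<X$ is equivalent to $\vartheta^{a}(K)<X^{a}$, reducing the new limit to the assumed one by a change of the counting parameter. Your version is slightly more polished in that it handles all three cases for $C$ at once and explicitly notes the well-definedness of the quotient for large $X$, whereas the paper spells out the $\epsilon$--$N$ argument for finite $C$ and leaves the infinite cases to the reader.
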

\begin{proof}
	We prove the case when $C$ is a constant, and the other cases are left to the reader, for they are similar to each other.
	Let $\epsilon>0$ be any positive number.
	Then by our condition (\ref{eqn:projective space?}), which says that the limit exists, there exists some $N>0$ such that for all $X>N$ we have
	\[\left\lvert\frac{N(\mathcal{S}(\Theta);f;X)}{N(\mathcal{S}(\Theta);g;X)}-C\right\rvert<\epsilon.\]
	Note that for any $X>0$, we have $\vartheta<X$ if and only if $\vartheta^a<X^a$.
	This implies that for all $X>N^{a}$, we also have
	\[\left\lvert\frac{N(\mathcal{S}(\Theta^a);f;X)}{N(\mathcal{S}(\Theta^a);g;X)}-C\right\rvert<\epsilon.\]
	And we are done for the proof when $C$ is a constant.
\end{proof}
In the sense of the Conjecture~\ref{conj:main body}, we can see that if the conjecture holds for $(\mathcal{S}(\Theta),\Omega)$, then it will also hold for $(\mathcal{S}(\Theta^a),\Omega)$.
Inspired by the conjecture and the above lemma, we make the following definition.
\begin{definition}\label{def:function R}
	Let $G$ be a transitive permutation group with a subset $\Omega$ closed under invertible powering and conjugation.
	Let $\mathcal{S}:=\mathcal{S}(G)$, and $\Theta$ be a counting function with parameters $\underline{x}=(x_1,\dots,x_{l})\in\mathbb{R}_+^{l}$, define for each $(\gamma_1,\gamma_2)\in\mathbb{N}^2$ the function $R^{\gamma_1}_{\gamma_2}:\mathbb{R}_+^{l}\to\mathbb{R}_{\geq0}$, given by the expression
	\[R^{\gamma_1}_{\gamma_2}(\underline{x}):=\lim_{X\to\infty}\frac{N(\mathcal{S}(\Theta_{\underline{x}});(\Omega,\gamma_1);X)}{N(\mathcal{S}(\Theta_{\underline{x}});(\Omega,\gamma_2);X)}.\]
\end{definition}
Note that $R$ may not be well-defined everywhere on $\mathbb{R}_+^{l}$.
By Lemma~\ref{lemma:projective space?}, we see that if $R^{\gamma_1}_{\gamma_2}(x_1,\dots,x_{l})=c$ for some real number $c\geq0$, then so is $R^{\gamma_1}_{\gamma_2}(x_1^a,\dots,x_l^a)$ for all $a>0$.
\begin{theorem}\label{thm:function R at infinity}
	Let $G$ be a transitive permutation group with a chain of subgroups $G=H_0\supsetneq H_1\supsetneq\cdots\supsetneq H_l=\{\operatorname{id}\}$, and let $\Omega:=H_{l-1}-H_l$.
	Let $\mathcal{S}:=\mathcal{S}(G)$, and $\Theta_{\underline{x}}$ be an invariant with parameter $\underline{x}\in\mathbb{R}^l_+$ as in Definition~\ref{def:invariant general}.
	Define $\Theta_x:=\Theta(c_1,\dots,c_{l-1},x)$, where $c_1,\dots,c_{l-1}$ are positive real numbers and define
	\[R^0(x):=\lim_{t\to\infty}\frac{N(\mathcal{S}(\Theta_x);(\Omega,0);t)}{N(\mathcal{S}(\Theta_x);t)}.\]
	If $R^0(x)$ is a well-defined function in $[x_0,\infty)$ with $R^0(x_0)>0$, where $x_0>c_i$ is a real number, for all $i=1,\dots,l-1$, and if $N(\mathcal{S}(\Theta_{x_0});t)\asymp f(t)$, where $f$ is a positive real function such that for each \(a>0\) we have
	\[\frac{f(a)}{f(ab)}=o(1),\]
	 as \(b\to\infty\) where the implied constant is independent of \(a\), then \(R^0(x)\) is continuous when \(x>x_0\) and 
	\[\lim_{x\to\infty}R^0(x)=1.\]
\end{theorem}
An example of such \(f\) is \(x^{\alpha}(\log x)^{\beta}(\log\log x)^{\gamma}\), where \(\alpha>0,\beta,\gamma\) are all real numbers.
\begin{proof}
	Let
	\begin{equation*}
		N_x(t):=N(\mathcal{S},\Theta_x;t)\quad\text{and}\quad N_{x,0}(t):=N(\mathcal{S}(\Theta_x);(\Omega,0);t).
	\end{equation*}
	Using our notation in Definition~\ref{def:invariant general}, a field extension $\tilde{K}/K_{l-1}$ is unramified, if and only if
	\[\mathbf{1}_{(\Omega,0)}(K)=1.\]
	According to Definition~\ref{def:counting number fields} and Definition~\ref{def:indicator of (Omega,r)-fields}, it is clear that
	\[\frac{N_{x,0}(t)}{N_{x}(t)}\leq1,\]
	i.e., $R(x)\leq1$ for each $x>x_0$.
	For a fixed $t>0$, the function $N_x(t)$ is non-increasing with respect to $x$, and the function $N_{x,0}(t)$ is independent of $x$, so the ratio
	\[R^0(x,t):=\frac{N_{x,0}(t)}{N_x(t)}\]
	is non-decreasing with respect to $x$.
	In particular, when $x$ is large enough, $R^0(x,t)=1$ because the discriminant $\operatorname{Nm}_{K_{l-1}/\mathbb{Q}}\Delta(\tilde{K}/K_{l-1})^x>t$ if it is non-trivial.
	According to the condition $N_{x_0}(t)$, there exists some constant $C_1,C_2>0$ such that
	\[C_1f(t)\leq N_{x_0}(t)\leq C_2f(t)\]
	for large enough $t$.
	Let $\epsilon>0$ be a small real number.
	Then there exists some $X,T>0$ such that for all $x>X$, and for all $t>T$, we have
	\[\begin{aligned}
		&\frac{1}{2}R(x_0)<R^0(x_0,t)<\frac{3}{2}R(x_0)\\
		\text{and }&\frac{2C_2f(\frac{t}{2^x})}{R^0(x_0)C_1f(t)}<\epsilon.
	\end{aligned}\]
	Note that if $\operatorname{Nm}\Delta K_l/K_{l-1}$ is nontrivial, then $\operatorname{Nm}\Delta K_l/K^{l-1}\geq2$.
	For any $(xx_0,t)$ with $x>X$ and $t>T$, we have
	\[\begin{aligned}
		&1-R^0(xx_0,t)\\
		=&1-\frac{N_{x_0}(t)}{N_{x_0,0}(t)+\#\{K\in\mathcal{S}\mid\Theta_{xx_0}(K)<x\text{ and }1<\operatorname{Nm}\Delta(K_l/K_{l-1})^{xx_0}<x\}}\\
		\leq&\frac{\#\{K\in\mathcal{S}\mid\Theta_{xx_0}(K)<x\text{ and }1<\operatorname{Nm}\Delta(K_l/K_{l-1})^{xx_0}<x\}}{N_{x_0}(t)}\\
		\leq&\frac{N_{x_0}(t/2^x)}{N_{x_0,0}(t)}=\frac{N_{x_0}(t/2^x)}{R^0(x_0,t)N_{x_0}(t)}\\
		\leq&\frac{C_2f(\frac{t}{2^x})}{\frac{1}{2}R^0(x_0)C_1f(t)}<\epsilon.
	\end{aligned}\]
	This shows that
	\[\lim_{(t,x)\to\infty}R^0(x,t)=1.\]
	We've shown that for all large enough $t$, we have
	\[\lim_{x\to\infty}R^0(x,t)=1,\]
	so the iterated limit exists, and for \(x\geq x_0\), the limit \(R^0(x)=\lim_{t\to\infty}R^0(x,t)\) is well-defined.
	This implies that the iterated limit exists, and
	\[\lim_{x\to\infty}\lim_{t\to\infty}R^0(x,t)=\lim_{x\to\infty}R^0(x)=1.\]
\end{proof}
The relation of field counting and distribution of class groups could be described as follows.
For each \(\underline{x}\in\mathbb{R}_+^l\) and for each \(\gamma=0,1,2,\dots\), define
\begin{equation*}
	R^\gamma(\underline{x}):=\lim_{t\to\infty}\frac{N(\mathcal{S}(\Theta_{\underline{x}});(\Omega,\gamma);t)}{N(\mathcal{S}(\Theta_{\underline{x}});t)}.
\end{equation*}
\begin{proposition}
	Let \(\underline{x}\in\mathbb{R}_+^l\), and assume that \(R^\gamma(\underline{x})\) is a well-defined function for all \(\gamma=0,1,2,\dots\).
	If there exists \(M\geq0\) such that \(R^{\gamma}(\underline{x})=0\) for all \(\gamma\geq M\) and such that
	\begin{equation*}
		\sum_{\gamma<M}R^{\gamma}(\underline{x})=c<1,
	\end{equation*}
	then for each non-random prime \(q\) of \(G\), we have
	\begin{equation*}
		\mathbb{E}(\lvert\operatorname{Hom}(\operatorname{Cl}_K,C_q)\rvert)=+\infty.
	\end{equation*}
\end{proposition}
\begin{proof}
	According to \cite[Theorem 4.5]{Wang2022DistributionOT}, it suffices to show that there exists some constant \(0\leq C<1\) such that
	\begin{equation*}
		\mathbb{P}(\operatorname{rk}_q\operatorname{Cl}_K\leq\gamma)<C.
	\end{equation*}
	According to \cite[Theorem 1]{RZ1969ClassRank}, we know that \(\operatorname{rk}_q\operatorname{Cl}_K\geq\#\{p\mid e_K(p)\equiv0\bmod{q}\}-2(n-1)\), where \(n=[K:\mathbb{Q}]\).
	So we see that
	\begin{equation*}
		\begin{aligned}
			\mathbb{P}(\operatorname{rk}_q\operatorname{Cl}_K\leq\gamma)=&\lim_{t\to\infty}\frac{\#\{K\mid\Theta_{\underline{x}}(K)<t\text{ and }\operatorname{rk}_q\operatorname{Cl}_K\leq\gamma\}}{N(\mathcal{S}(\Theta_{\underline{x}});t)}\\
			\leq&\lim_{t\to\infty}\frac{\sum_{i=0}^{\gamma+2(n-1)}N(\mathcal{S}(\Theta_{\underline{x}});(\Omega,i);t)}{N(\mathcal{S},\Theta_t;t)}\\
			\leq&\lim_{x\to\infty}\frac{\sum_{i=0}^{\gamma+M+2(n-1)}N(\mathcal{S}(\Theta_{\underline{x}});(\Omega,i);t)}{N(\mathcal{S}(\Theta_{\underline{x}});t)}=c<1.
		\end{aligned}
	\end{equation*}
	This shows that the probability \(\mathbb{P}(\operatorname{rk}_q\operatorname{Cl}_K\leq\gamma)\) is globally bounded above by a number \(c<1\), hence infinite \(C_q\)-moment.
\end{proof}

\section{Tauberian Theorem and Dirichlet series}\label{section:tauberian theorem}
In this section, we first construct a generating series generalizing the usual Dirichlet series, and then make a brief introduction to Delange's Tauberian Theorem together with its application towards the series we constructed.
\subsection{Generating series}
We need some notations before constructing the series.
\begin{definition}
	\begin{enumerate}[(i)]
		\item Let $\imath:\mathbb{N}\to\mathbb{R}_{\geq0}$ be an injective increasing map and let $S\subset\mathbb{R}_{\geq0}$ be the image of $\imath$.
		If for each $N>0$, for all but finitely many $n\in\mathbb{N}$ we have $\imath(n)>N$, then we say that $(\imath,S)$ is a set of index.
		When there is no danger of confusion, we just omit $\imath$ and say $S$ is an index.
		\item Let $S$ be a set of index, and let $a:S\to\mathbb{R}_{\geq0}$ be a map.
		Then define 
		\[D_a(s):=\sum_{d\in S}a_dd^{-s},\]
		to be its generating series, i.e., $a_d=a(d)$ for each $d\in S$.
	\end{enumerate} 
\end{definition}
The first example is of course when $S=\mathbb{N}$, and $D_a(s)$ is the usual Dirichlet series with non-negative real coefficients.
Of course, when we want to emphasize that it is a complex function, we can just write $f(s)=D_a(s)$
In this paper, we'll usually let capital letters denote the summatory functions.
For example, let
\[A(x):=\sum_{d<x}a_d.\]
It is also common to obtain a series $f(s)=D_a(s)$ from some other methods, say Euler products, without knowing much about the map $a:S\to\mathbb{R}_{\geq0}$ itself.
Since we are mainly interested in the coefficients, i.e., the arithmetic function $a:S\to\mathbb{R}_{\geq0}$, let's make the following definition so that we can compare the coefficients of generating series.
\begin{definition}\label{def:partial ordering of generating series}
	Let $S$ be an index set.
	Let $D_a(s)=\sum_{d\in S}a_{d}d^{-s}$ and $D_b(s)=\sum_{d\in S}b_{d}d^{-s}$ be two generating series with non-negative real coefficients.
	We say that $D_a(s)\leq D_b(s)$ if $a_{d}\leq b_{d}$ for all $d\in S$.
\end{definition}
Then we establish the basic properties of the generating series.
In the rest of this section, let $(\imath,S)$ be a fixed set of index, and let $a:S\to\mathbb{R}_{\geq0}$ be a map with generating series $D_a(s)$.
The first result shows that, just like Dirichlet series, the generating series $D_a(s)$ converges in a half-plane.
\begin{proposition}\label{prop:uniform covergence of generating series}
	Suppose that $D_a(s)$ converges at a point $s=s_0$, and that $H>0$ is a positive real number.
	Then the series $D_a(s)$ is uniformly convergent in the sector $\mathcal{T}:=\{s:\sigma\geq\sigma_0,\lvert t-t_0\rvert\leq H(\sigma-\sigma_0)\}$.
\end{proposition}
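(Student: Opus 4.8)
The plan is to adapt the classical proof that a Dirichlet series converges uniformly in any Stolz sector with vertex at a point of convergence. Enumerate the index set as $d_1<d_2<\cdots$, set $w:=s-s_0$ and $c_n:=a_{d_n}d_n^{-s_0}$, so that by hypothesis $\sum_n c_n$ converges and hence satisfies the Cauchy criterion. The defining inequality of $\mathcal{T}$, namely $|t-t_0|\le H(\sigma-\sigma_0)$, says precisely that $|\Im w|\le H\,\Re w$, which gives the bound $|w|/\Re w\le\sqrt{1+H^2}$ throughout $\mathcal{T}$ apart from the vertex $s=s_0$ (where the series converges by assumption, so there is nothing to check).

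First I would fix $\epsilon>0$ and, using the Cauchy criterion for $\sum_n c_n$ together with $d_n\to\infty$, choose $M_0$ so that $\bigl|\sum_{k=M+1}^{n}c_k\bigr|<\epsilon$ for all $n>M\ge M_0$ and also $d_n\ge 1$ for $n>M_0$. Then, for $N>M\ge M_0$ and $s\in\mathcal{T}$ with $s\neq s_0$, I would apply Abel summation to $\sum_{n=M+1}^{N}c_n d_n^{-w}$, expressing it through the partial sums $T_n:=\sum_{k=M+1}^{n}c_k$ and the increments $d_n^{-w}-d_{n+1}^{-w}$. The key estimate is $|d_n^{-w}-d_{n+1}^{-w}|\le\tfrac{|w|}{\Re w}\bigl(d_n^{-\Re w}-d_{n+1}^{-\Re w}\bigr)$, which follows by writing $d_n^{-w}-d_{n+1}^{-w}=w\int_{d_n}^{d_{n+1}}u^{-w-1}\,\mathrm{d}u$ and taking absolute values. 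Telescoping the resulting sum and using $d_{M+1}\ge 1$, $\Re w>0$, one gets, for $\bigl|\sum_{n=M+1}^{N}c_n d_n^{-w}\bigr|$, a bound of the shape $\epsilon\bigl(\tfrac{|w|}{\Re w}+1\bigr)$; the sector estimate then turns this into $\epsilon\bigl(\sqrt{1+H^2}+1\bigr)$, uniformly over $s\in\mathcal{T}$ and over $N$. This is exactly the uniform Cauchy criterion, which proves the proposition.

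The step that warrants the most care — and the reason the naive approach fails — is the observation that one genuinely needs partial summation rather than a term-by-term absolute bound: even though the coefficients $a_d$ are non-negative, the series $\sum_n a_{d_n}d_n^{-s_0}$ may converge only conditionally (the phases $d_n^{-i\,\Im s_0}$ can produce cancellation), so $\sum_n|c_n|$ need not be finite and one cannot simply dominate the tail by $\sum_{n>M}|c_n|$. Apart from that, the only remaining bookkeeping is the separate (trivial) treatment of the vertex $s=s_0$ and of the finitely many indices with $d_n<1$. Finally, I would note that since the argument works for every $H>0$, it also yields uniform convergence on every compact subset of the open half-plane $\{\sigma>\Re s_0\}$, and hence holomorphy of $D_a(s)$ there.
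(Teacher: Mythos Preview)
Your proof is correct and follows essentially the same approach as the paper's: both arguments reduce to partial summation against the convergent series $\sum a_d d^{-s_0}$ and then control the ratio $|s-s_0|/(\sigma-\sigma_0)$ via the sector condition. The only cosmetic differences are that the paper phrases the summation by parts as a Riemann--Stieltjes integral with the remainder function $R(u)=\sum_{d>u}a_d d^{-s_0}$ (yielding the bound $(2+|s-s_0|/(\sigma-\sigma_0))\varepsilon$), and uses the cruder triangle-inequality estimate $|s-s_0|/(\sigma-\sigma_0)\le H+1$ in place of your sharper $\sqrt{1+H^2}$.
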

The proof is similar to the case of Dirichlet series.
See \cite[Theorem 1.1]{montgomery2006multiplicative} for example.
\begin{proof}
	Let $R(u):=\sum_{d>u}a_dd^{-s_0}$ be the remainder term of the series $D_a(s_0)$.
	First we show that for any $s$,
	\begin{equation}\label{eqn:uniform covergence of generating series 1}
		\begin{aligned}
			\sum_{d=\imath(M+1)}^{\imath(N)}a_dd^{-s}=&R(\imath(M))(\imath(M))^{s_0-s}-R(\imath(N))(\imath(N))^{s_0-s}\\
			+&(s_0-s)\int_{\imath(M)}^{\imath(N)}R(u)u^{s_0-s-1}\operatorname{d}u.
		\end{aligned}
	\end{equation}
	Note that $a_{\imath(j)}=(R(\imath(j-1))-R(\imath(j)))\imath(j)^{s_0}$.
	According to Riemann-Stieltjes integral and integral by parts, we have
	\[\begin{aligned}
		\sum_{d=\imath(M+1)}^{\imath(N)}a_dd^{-s}&=-\int_{\imath(M)}^{\imath(N)}u^{s_0-s}\operatorname{d}R(u)\\
		&=-u^{s_0-s}R(u)\Big\vert_{\imath(M)}^{\imath(N)}+\int_{\imath(M)}^{\imath(N)}R(u)\operatorname{d}u^{s_0-s}\\
		=&R(\imath(M))(\imath(M))^{s_0-s}-R(\imath(N))(\imath(N))^{s_0-s}\\
		+&(s_0-s)\int_{\imath(M)}^{\imath(N)}R(u)u^{s_0-s-1}\operatorname{d}u.
	\end{aligned}\]
	If $\lvert R(u)\rvert\leq\varepsilon$ for all $u\geq M$ and if $\sigma\geq\sigma_0$, then from (\ref{eqn:uniform covergence of generating series 1}) we see that
	\[\left\lvert\sum_{d=\imath(M+1)}^{\imath(N)}a_dd^{-s}\right\rvert\leq2\varepsilon+\varepsilon\lvert s-s_0\rvert\int_M^\infty u^{\sigma_0-\sigma-1}\operatorname{d}u\leq\left(2+\frac{\lvert s-s_0\rvert}{\sigma-\sigma_0}\right)\varepsilon.\]
	For $s$ in the prescribed region we see that 
	\[\lvert s-s_0\rvert\leq\sigma-\sigma_0+\lvert t-t_0\rvert\leq(H+1)(\sigma-\sigma_0),\]
	so that the sum $\sum_{d=\imath(M+1)}^{\imath(N)}a_dd^{-s}$ is uniformly small, and the result follows from the uniform version of Cauchy principle.
\end{proof}
\begin{remark}
	Proposition~\ref{prop:uniform covergence of generating series} implies that the generating series $D_a(s)$ converges in a half-plane $\sigma\geq\sigma_c$, and we call it \emph{abscissa of convergence}.
	Moreover, since each term of the series is a regular function (complex analytic function or holomorphic function) in the open half-plane $\sigma>0$ and the series itself is locally uniformly convergent in $\sigma>\sigma_c$, we see that series $D_a(s)$ is also a regular function in $\sigma>\sigma_c$ by Weierstrass principle.
\end{remark}
The following result is an analogous statement of Dirichlet series, saying that $D_a(s)$ could be expressed in the form of an integral using the idea of Riemann-Stieltjes integral.
See \cite[Theorem 1.3]{montgomery2006multiplicative} for example.
\begin{proposition}\label{prop:integral form of D(s)}
	Let $A(x):=\sum_{d<x}a_d$ be the summatory function.
	Denote the abscissa of convergence of $D_a(s)$ by $\sigma_c$.
	If $\sigma_c<0$, then $A(x)$ is bounded and 
	\begin{equation}\label{eqn:integral form of D(s) 1}
		D_a(s)=s\int_0^\infty x^{-s-1}A(x)\operatorname{d}x,
	\end{equation}
	for $\sigma>0$.
	If $\sigma_c\geq0$, then
	\begin{equation}\label{eqn:integral form of D(s) 2}
		\limsup_{x\to\infty}\frac{\log\lvert A(x)\rvert}{\log x}=\sigma_c,
	\end{equation}
	and (\ref{eqn:integral form of D(s) 1}) holds for $\sigma>\sigma_c$.
\end{proposition}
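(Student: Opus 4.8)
\textbf{Proof proposal for Proposition~\ref{prop:integral form of D(s)}.}

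The plan is to mimic the classical proof for Dirichlet series, using the Riemann--Stieltjes machinery already set up in the proof of Proposition~\ref{prop:uniform covergence of generating series}. The key structural facts I will use are: (i) $A(x)=\sum_{d<x}a_d$ is non-decreasing (all $a_d\ge 0$) and is a step function that jumps only at the points of $S$; (ii) $D_a(s)$ converges precisely for $\sigma>\sigma_c$; and (iii) the partial-summation identity (\ref{eqn:uniform covergence of generating series 1}) from the previous proof, which is the generating-series analogue of Abel summation.

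First I would treat the case $\sigma_c<0$. Because $D_a(0)=\sum_{d\in S}a_d$ converges (as $0>\sigma_c$) and all terms are non-negative, the series of coefficients is absolutely summable, so $A(x)\le\sum_{d\in S}a_d<\infty$ is bounded. To get the integral representation (\ref{eqn:integral form of D(s) 1}) for $\sigma>0$, write $D_a(s)=\int_{0^-}^{\infty}x^{-s}\,\mathrm{d}A(x)$ as a Riemann--Stieltjes integral (legitimate since $A$ is of bounded variation on every finite interval and the tail converges), then integrate by parts: $\int x^{-s}\,\mathrm{d}A(x)=x^{-s}A(x)\big|+s\int_0^{\infty}x^{-s-1}A(x)\,\mathrm{d}x$. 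The boundary term at $0$ vanishes since $A(x)=0$ for $x$ below the smallest element of $S$, and the boundary term at $\infty$ vanishes because $A(x)$ is bounded while $x^{-s}\to 0$ for $\sigma>0$. This yields (\ref{eqn:integral form of D(s) 1}).

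Next I would handle $\sigma_c\ge 0$. For the lower bound in (\ref{eqn:integral form of D(s) 2}), suppose $\limsup_{x\to\infty}\frac{\log A(x)}{\log x}=\alpha$; then for any $\sigma>\alpha$ we have $A(x)\ll x^{\sigma'}$ for some $\alpha<\sigma'<\sigma$, and partial summation via (\ref{eqn:uniform covergence of generating series 1}) (taking $s_0$ with $\sigma_0$ slightly larger, or directly bounding $\sum a_d d^{-s}=s\int x^{-s-1}A(x)\,\mathrm{d}x$) shows the integral $s\int_1^\infty x^{-s-1}A(x)\,\mathrm{d}x$ converges, hence $\sigma_c\le\alpha$. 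Conversely, if $\sigma>\sigma_c$, then $D_a(\sigma)$ converges, so its tail $R(u)=\sum_{d>u}a_d d^{-\sigma}\to 0$; then $A(x)-A(u)=\sum_{u\le d<x}a_d\le x^{\sigma}\sum_{u\le d<x}a_d d^{-\sigma}\le x^{\sigma}R(u)$, and choosing $u$ fixed large gives $A(x)\ll x^{\sigma}$, so $\alpha\le\sigma$; letting $\sigma\downarrow\sigma_c$ gives $\alpha\le\sigma_c$. Combining, $\alpha=\sigma_c$. Finally, (\ref{eqn:integral form of D(s) 1}) for $\sigma>\sigma_c$ follows by the same integration-by-parts as above: on the region $\sigma>\sigma_c\ge 0$ we have $A(x)\ll x^{\sigma_c+\epsilon}$ with $\sigma_c+\epsilon<\sigma$, so $x^{-s}A(x)\to 0$ and the boundary term at infinity again vanishes; the interior integral converges absolutely by the same bound.

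The main obstacle I anticipate is purely technical rather than conceptual: making the Riemann--Stieltjes integration by parts rigorous at the two endpoints when $S$ is an arbitrary index set (not $\mathbb{N}$) and $A$ has jumps exactly on $S$. One must be careful that the integrator $\mathrm{d}A$ and the integrand $x^{-s}$ have no common discontinuities (true, since $x^{-s}$ is smooth) so the integration-by-parts formula holds on each finite interval, and then pass to the limit using the established tail estimates on $R(u)$ and on $A(x)$. Since these tail estimates are exactly what Proposition~\ref{prop:uniform covergence of generating series} and its proof provide, I expect the argument to go through cleanly, essentially verbatim from the Dirichlet-series case in \cite[Theorem 1.3]{montgomery2006multiplicative}.
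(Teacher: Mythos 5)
Your proposal is correct and follows essentially the same route as the paper: Riemann--Stieltjes partial summation to get $\sum_{d<N}a_dd^{-s}=A(N)N^{-s}+s\int x^{-s-1}A(x)\,\mathrm{d}x$, boundedness of $A$ from convergence at $s=0$ when $\sigma_c<0$, and the two inequalities $\alpha\geq\sigma_c$ (via the convergence the bound $A(x)\ll x^{\beta}$ would force) and $\alpha\leq\sigma_c$ (via a tail estimate for $\sigma_0>\sigma_c$), exactly as in the adaptation of \cite[Theorem 1.3]{montgomery2006multiplicative}. The only cosmetic difference is that you bound $A(x)\ll x^{\sigma}$ directly from $a_d\leq a_dd^{-\sigma}x^{\sigma}$, whereas the paper reuses identity (\ref{eqn:uniform covergence of generating series 1}) with $s=0$; both are fine.
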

\begin{proof}
	Let's do the following computation:
	\[\begin{aligned}
		\sum_{n<N}a_dd^{-s}&=\int^{N}_{1^-}x^{-s}\operatorname{d}A(x)\\
		&=x^{-s}A(x)\big\vert_{0^-}^{N}-\int_{1^-}^{N}A(x)\operatorname{d}x^{-s}\\
		&=A(N)N^{-s}+s\int_{1^-}^{N}x^{-s-1}A(x)\operatorname{d}x.
	\end{aligned}\]
	Let $\alpha$ be the left-hand side of (\ref{eqn:integral form of D(s) 2}).
	If $\beta>\alpha$, then this says that $A(x)\ll x^\beta$ as $x\to\infty$ where the implied constant may depend on $a_n$ and $\beta$.
	For a complex number $s$, if $\sigma>\beta$, then the integral (\ref{eqn:integral form of D(s) 1}) is absolutely convergent, and
	\[\lim_{N\to\infty}A(N)N^{-s}=0.\]
	
	According to Proposition~\ref{prop:uniform covergence of generating series}, if $\sigma_c<0$, then $\lim_{x\to\infty}A(x)=D_a(0)$ is a finite number, hence $A(x)$ must be bounded.
	So the statement holds when $\sigma_c<0$.
	
	If $\sigma_c\geq0$, then by Proposition~\ref{prop:uniform covergence of generating series}, we know that for any $\beta<\sigma_c$, the series $D_a(s)$ is divergent.
	So, $\alpha$ has to be $\geq\sigma_c$, otherwise the above computation would imply that $D_a(s)$ is convergent at some point such that $\sigma<\sigma_c$.
	On the other hand, choose any $\sigma_0>\sigma_c$. By (\ref{eqn:uniform covergence of generating series 1}) with $s=0$ and $M=0$, we have
	\[A(\imath(N))=-R(\imath(N))(\imath(N))^{\sigma_0}+\sigma_0\int_0^{\imath(N)}R(u)u^{\sigma_0-1}\operatorname{d}u.\]
	Since $R(u)$ is a bounded function, we know that
	\[A(x)\ll x^{\sigma_0}\]
	as $x\to\infty$, where the implied constant may depend on $a_n$ and on $\sigma_0$.
	This shows that $\alpha\leq\sigma_0$, hence $\alpha\leq\sigma_c$.
\end{proof}
Just like Dirichlet series, a generating series $D_a(s)$ also admits the concept of absolute convergence.
To be precise, we say that a generating series $D_a(s)=\sum_{d\in S}a_d d^{-s}$ is \emph{absolutely convergent} if
\[\sum_{d\in S}\lvert a_d\rvert d^{-\sigma}\]
is convergent.
Let $\sigma_a$ be the \emph{abscissa of absolute convergence}.
\begin{lemma}
	If $\delta$ is the abscissa of convergence of the series
	\[\sum_{d}d^{-s},\]
	then $\sigma_c\leq\sigma_a\leq\sigma_c+\delta$.
\end{lemma}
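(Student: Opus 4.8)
The plan is to establish the two inequalities separately; both follow from Proposition~\ref{prop:uniform covergence of generating series} together with the elementary comparison argument familiar from the theory of Dirichlet series, and I do not expect either to be difficult.

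The inequality $\sigma_c \le \sigma_a$ is immediate: absolute convergence of $D_a$ at a point forces convergence there, so the set of points where $D_a$ converges contains the set where it converges absolutely, and passing to infima of real parts gives $\sigma_c \le \sigma_a$. (If $D_a$ is nowhere absolutely convergent then $\sigma_a = +\infty$ and there is nothing to check.)

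For $\sigma_a \le \sigma_c + \delta$ I would proceed as follows. Fix a real number $\sigma_1 > \sigma_c$. By Proposition~\ref{prop:uniform covergence of generating series} and the Remark following it, $D_a$ converges at the real point $\sigma_1$, so its terms are bounded: there is a constant $M$ with $|a_d|\, d^{-\sigma_1} \le M$, i.e. $|a_d| \le M d^{\sigma_1}$, for all $d \in S$. Now let $\sigma$ be any real number with $\sigma > \sigma_1 + \delta$. Then
\[
	\sum_{d \in S} |a_d|\, d^{-\sigma} \;\le\; M \sum_{d \in S} d^{\sigma_1 - \sigma} \;=\; M \sum_{d \in S} d^{-(\sigma - \sigma_1)},
\]
and since $\sigma - \sigma_1 > \delta$ while $\sum_{d} d^{-s}$ is a series of positive terms — so its abscissa of convergence coincides with its abscissa of absolute convergence, namely $\delta$ — the last sum converges. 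Hence $D_a$ converges absolutely at every real $\sigma > \sigma_1 + \delta$, which gives $\sigma_a \le \sigma_1 + \delta$. Letting $\sigma_1$ decrease to $\sigma_c$ yields $\sigma_a \le \sigma_c + \delta$; the degenerate cases $\sigma_c = -\infty$ or $\delta = +\infty$ are covered by the same estimate or are vacuous.

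The only step that is not entirely mechanical — and hence the closest thing to an obstacle — is knowing that convergence of $D_a$ at some complex point propagates to convergence at all real points further to the right; this is exactly the content of Proposition~\ref{prop:uniform covergence of generating series} (choose the aperture $H$ large enough that the desired real point lies in the sector), so once that proposition is invoked the proof reduces to the comparison displayed above.
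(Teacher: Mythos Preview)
Your proof is correct and follows essentially the same route as the paper: the first inequality is dismissed as immediate, and for the second both you and the paper pick a real point just to the right of $\sigma_c$, use convergence there to bound $|a_d|\ll d^{\sigma_c+\epsilon}$ (equivalently your $|a_d|\le M d^{\sigma_1}$), and then compare termwise with $\sum_d d^{-s}$ evaluated just past $\delta$. The only cosmetic difference is that the paper writes the argument with an $\epsilon$ rather than your $\sigma_1$, and omits your discussion of degenerate cases.
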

\begin{proof}
	The inequality $\sigma_c\leq\sigma_a$ is clear.
	Let $\epsilon>0$ be a positive real number.
	Since $\sum_{d}a_d d^{-\sigma_c-\epsilon}$ is convergent, we see that $a_d\ll d^{\sigma_c+\epsilon}$ as $d\to\infty$, and the implied constant may depend on $a_d$ and $\epsilon$.
	So the series $\sum_{d}\lvert a_d\rvert d^{-\sigma_c-2\epsilon-\delta}$ must be convergent by comparing with
	\[\sum_{d}d^{-\delta-\epsilon}.\]
\end{proof}
\subsection{Tauberian Theorem}
The work of Delange, generalizing Ikehara's Theorem, could be called ``effective'' or transcendental Tauberian theorem.
Because unlike Hardy-Littlewood Theorem (see \cite[Theorem 5.11]{montgomery2006multiplicative} for example) which relates the behaviour of the real function $\sum_n a_nn^{\sigma}$ when $\sigma>1$ on the real line to that of $\sum_{n\leq x}a_n/n$, it instead cares about the asymptotic information about $\sum_{n<x}a_n$ using the value of the function $\sum_n a_nn^{s}$ when $s$ is complex, which looks like a ``jump'' from the half-plane $\sigma>1$ to the point $s=0$.

From Theorem~\ref{Theorem III} and \ref{Theroem IV} (see also Delange~\cite{Delange54}), we can prove the following.
\begin{theorem}\label{thm:delange}
	Let \(a:S\to\mathbb{R}_{\geq0}\) be a non-negative real function, and let \(A(x):=\sum_{d<x}a_d\) be the summatory function.
	Let \(\alpha\) be the convergence of abscissa of the generating series \(D_a(s)\).
	If there exists holomorphic functions \(g_1,\dots,g_{\gamma}\) and \(h\) in a neighbourhood of the open half-plane \(\sigma>\alpha\), with \(g_{\gamma}(\alpha)\neq0\), such that
	\begin{equation*}
		D_a(s)=(s-\alpha)^{-\beta}\sum_{j=0}^{\gamma}g_j(s)\Bigl(\log\frac{1}{s-\alpha}\Bigr)^j,
	\end{equation*}
	for all \(\Re(s)>\alpha\), where \(\beta,\gamma\) are non-negative integers such that \(\beta+\gamma>0\),
	then as \(t\to\infty\), we have
	\begin{equation*}
		A(x)\sim\left\{\begin{aligned}
			&\frac{g(\alpha)}{\Gamma(\beta)}x^{\alpha}(\log x)^{\beta-1}(\log\log x)^{\gamma}\quad&\text{if }\beta>0;\\
			&\gamma g_{\gamma}(\alpha)\frac{x^{\alpha}}{\log x}(\log\log x)^{\gamma-1}\quad&\text{else if }\beta=0,\gamma\geq1.
		\end{aligned}\right.
	\end{equation*}
\end{theorem}
Note that \(\log s\) is understood to be the main brunch of the logarithm.
\begin{proof}
	Let \(B(t):=\sum_{d<e^t}a_d\).
	For the convenience of our computation, we just assume without loss of generality that \(a_d=0\) for all \(0<d\leq1\), because there are only finitely many \(d\leq1\).
	According to Proposition~\ref{prop:integral form of D(s)}, when \(\Re(s)>\alpha\), we have
	\begin{equation*}
		\begin{aligned}
			D_a(s)=&s\int_0^{\infty}x^{-s-1}A(x)\operatorname{d}x\\
			=&s\int_1^{\infty}x^{-s-1}A(x)\operatorname{d}x\\
			\stackrel{x=e^t}{=}&s\int_0^{\infty}e^{-st}B(t)\operatorname{d}t.
		\end{aligned}		
	\end{equation*}
	Let \(f(s):=\int_0^\infty e^{-st}B(t)\operatorname{d}t\).
	It is clear that \(B(t)\) and \(f(s)\) satisfy the conditions of Theorem~\ref{Theorem III} if \(\alpha>0\), \(\gamma=0\) and \(\beta>0\).
	In this case, we obtain the asymptotic formula
	\begin{equation*}
		B(t)\sim\frac{g(\alpha)}{\Gamma(\beta)}e^{\alpha t}t^{\beta-1}.
	\end{equation*}
	So \(A(x)\sim g(\alpha)\Gamma(\beta)^{-1}x^{\alpha}(\log x)^{\beta-1}\) as in the statement.
	The proof of other cases is just similar and is left to the reader.
\end{proof}
In short, as long as we can prove the analytic continuation together with the pole behaviour at its abscissa of convergence  (which is of course not easy in general), then we can apply the theorem and obtain the asymptotic behaviour of \(A(x)\).
See also Narkiewicz~\cite[Appendix II Theorem I]{narkiewicz2014elementary} for Tauberian Theorem (Delange-Ikehara Theorem) applied to Dirichlet series.
\subsection{Some analytic results}
We prove some results that will be used later.
In the rest of the section, let $\underline{x}\in\mathbb{R}_+^l$ denote an $l$-dimensional vector with positive coordinates, and let
\[S(\underline{x}):=\{d\in\mathbb{R}_+\mid d=e_1^{x_1}e_2^{x_1+x_2}\cdots e_l^{x_1+\cdots+x_l}\text{ where }e_i\text{ is a positive integer}\}\]
be the index set with parameter $\underline{x}$.
Denote by $\underline{u}^n$ a vector of dimension $n$, and omit the superscript $n$ if there is no danger of confusion.
\begin{definition}
	Let
	\begin{equation*}
		U:=\{(\underline{x},s)\mid\underline{x}\in\mathbb{R}_+^l\text{ and }\Re(s)>0\}
	\end{equation*} 
	be an open subset of \(\mathbb{R}^l\times\mathbb{C}\).
	Let $N>0$ be a positive integer number.
	We say that a map $b:\mathcal{P}\to\mathbb{R}_{\geq0}$ is a function modulo $N$ if $b(p)=b(q)$ whenever $p\equiv q\bmod{N}$ for all rational primes $p,q\in\mathcal{P}$.
	A function $a:\mathcal{P}\times U\to\mathbb{C}$ is called a coefficient with main term $b:\mathcal{P}\to\mathbb{R}_+$ if for each \(p\in\mathcal{P}\), \(a(p):U\to\mathbb{C}\) is regular, and there exists $\delta>0$ such that
	\[a(p,\underline{x},s)-b(p)\ll p^{-\delta\sigma}\]
	for $(\underline{x},s)\in U$, where the implied constant is independent of $p$.
\end{definition}
The functions we use involve many variables in general.
But the complex one plays a more important role than others.
So we make the following notation.
\begin{definition}\label{def:multivariable functions}
	If a function \(f(\underline{x},s)\) is continuous in a subset \(U'\) of \(U\), 
	and for each \(\underline{x}\in V\) the function \(f_{\underline{x}}(s):=f(\underline{x},s)\) is a regular function on \(U'_{\underline{x}}:=\{s\in\mathbb{C}\mid(\underline{x},s)\in U'\}\), 
	then we say that \(f:U'\to\mathbb{C}\) is a regular function with parameter \(\underline{x}\).
\end{definition}
\begin{proposition}\label{prop:analytic continuation of zeta(q,a)}
	Let \(a:\mathcal{P}\times U\to\mathbb{C}\) be a coefficient function with main term \(b:\mathcal{P}\to\mathbb{R}_{\geq0}\) being a non-zero function modulo \(N\).
	Define
	    \[D(\underline{x},s):=\prod_{p\in\mathcal{P}}\big(1+a(p,\underline{x},s)p^{-x_1 s}\big).\]
	Then $D(\underline{x},s)$ defines a regular function with parameter \(\underline{x}\) on the subset
	    \[U_0:=\{(\underline{x},s)\in U\mid\Re(s)>x_1^{-1}\},\]
	of $U$, and for each $\underline{x}\in\mathbb{R}_+^l$.
	Let
	\begin{equation*}
		\bar{U}_0:=\{(\underline{x},s)\in U\mid\Re(s)\geq x_1^{-1}\}.
	\end{equation*}
	There exists some regular function $f(\underline{x},s):\bar{U}_0\to\mathbb{C}$ with parameters such that $f(\underline{x},s)\neq0$ for each \((\underline{x},s)\in\bar{U}_0\), and such that
	\[D_{\underline{x}}(s)^{\phi(N)}=f_{\underline{x}}(s)(s-x_1^{-1})^{-\beta}\]
	where $\beta$ is a natural number given by
	    \[\beta=\sum_{\substack{
			n\in(\mathbb{Z}/N)^*\\
			p\equiv n\bmod{N}
	}}b(p).\]
\end{proposition}
\begin{proof}
	According to the definition of the coefficient $a$, let $C>0$ be a real number such that for each $p$ and $(\underline{x},s)\in U$ we have
	    \[\lvert b(p)\rvert<C\text{ and }\lvert a(p,\underline{x},s)\rvert<C\]
	for all $p\in\mathcal{P}$ and $(\underline{x},s)\in U$.
	By the inequality
	    \[\lvert D(\underline{x},s)\rvert\leq\prod_{p\in\mathcal{P}}(1+Cp^{-x_1\sigma}),\]
	we see that $D(\underline{x},s)$ defines a continuous function in $U_0$.
	And for each $\underline{x}\in\mathbb{R}_+^l$, the function $D_{\underline{x}}(s):=D(\underline{x},s)$ is a regular function in the open half-plane $\Re(s)>x_1^{-1}$ by comparing with
	\begin{equation*}
		\prod_{p\in\mathcal{P}}(1+Cp^{-s}).
	\end{equation*}
	Let
	\[\tilde{D}_{\underline{x}}(\underline{s}):=\prod_{p\in\mathcal{P}}\big(1+b(p)p^{-x_1 s}\big).\]
	Similar argument implies that $\tilde{D}(\underline{x},s)$ is also continuous in $U_0$, and for each $\underline{x}\in\mathbb{R}_+^l$, the function $\tilde{D}_{\underline{x}}(\underline{s})$ is a regular function in the open half-plane \(\Re(s)>x_1^{-1}\).
	According to our assumption on the coefficient function $a$, we may assume without loss of generality that $\tilde{D}(\underline{x},\underline{s}),D(\underline{x},\underline{s})$ are both non-zero in $U_0$.
	Otherwise, we only take into account large enough $p$.
	For example, replace $\tilde{D}(\underline{x},\underline{s})$ by 
	\[\prod_{p>C}\bigl(1+b(p)p^{-x_1 s}\bigr).\]  
	Then take logarithmic and we have
	\[\begin{aligned}
		&\lvert\log D(\underline{x},s)-\log\tilde{D}(\underline{x},s)\rvert\\
		=&\Big\lvert\sum_{p\in\mathcal{P}}\sum_{n=1}^\infty\frac{(-1)^{n-1}}{n}a(p,\underline{x},s)^n p^{-nx_1 s}-\sum_{p\in\mathcal{P}}\sum_{n=1}^\infty\frac{(-1)^{n-1}}{n}b(p)^n p^{-nx_1 s}\Big\rvert\\
		=&\Big\lvert\sum_{p\in\mathcal{P}}\sum_{n=1}^\infty\frac{(-1)^{n-1}}{n}\Bigl(a(p,\underline{x},s)^n p^{-nx_1 s}-b(p)^n p^{-nx_1 s}\Bigr)\Big\rvert\\
		\leq&\sum_{p\in\mathcal{P}}p^{-(x_1+\delta)\sigma}+\sum_{p\in\mathcal{P}}\sum_{n=2}^{\infty}2\frac{C^n}{n}p^{-nx_1\sigma}
	\end{aligned}\]
	This implies that the series of the difference $\log D(\underline{x},s)-\log\tilde{D}(\underline{x},s)$ is absolutely convergent on \(\bar{U}_0\), hence a regular function \(\bar{U}_0\to\mathbb{C}\) with parameter \(\underline{x}\).
	Let's denote it by $h(\underline{x},s)$.
	According to \cite[Proposition 5.4]{Wang2022DistributionOT}, there exists some regular functions $g$ such that $g(s)\neq0$ in $\Re(s)\geq1$ and such that
	\[\prod_{p\in\mathcal{P}}(1+b(p)p^{-s})^{\phi(N)}=g(s)(s-1)^{\beta}.\]
	So, we have $\tilde{D}_{\underline{x}}(s)=g(x_1 s)(x_1 s-1)^{\beta}$.
	Finally, for each \(\underline{x}\), we have
	\[D_{\underline{x}}(s)^{\phi(N)}=\bigl(\tilde{D}_{\underline{x}}(s)e^{h(\underline{x},s)}\bigr)^{\phi(N)}=e^{\phi(N)h(\underline{x},s)}g(x_1 s)(x_1 s-1)^{-\beta}.\]
	So we can just take $f(\underline{x},s)$ to be $e^{h(\underline{x},s)}g(x_1 s)x_1^{-\beta}$.
\end{proof}
\begin{proposition}\label{prop:count (q,a) primes}  
	Let \(a_i:\mathcal{P}\times U\to\mathbb{C}\) be a coefficient function with main term \(b_i:\mathcal{P}\to\mathbb{R}_{\geq0}\) being a non-zero function modulo \(N\), where $i=1,\dots,l$.
	Define for \(\gamma=1,2,\dots\) the following series
	\[D_{\underline{t}^{\gamma}}(\underline{x},s):=\sum_{\substack{
			\underline{p}\in\mathcal{P}^{\gamma}\\
			p_1<\cdots<p_{\gamma}
	}}\prod_{i=1}^{\gamma}a_{i}(p,\underline{x},s)p_i^{-t_i(\underline{x})s},\]
    where \(t_i(\underline{x})\) is any projection map \(\pi(\underline{x})=x_j\) for some \(1\leq j\leq l\).
    Let \(x:=\min_{i=1}^{\gamma}\{l_i(\underline{x})\}\).
    \begin{equation*}
    	U_0:=\{(\underline{x},s)\in U\mid x_1<x_2<\cdots<x_l\text{ and }\Re(s)>x^{-1}\}
    \end{equation*}
    be a subset of \(U\).
	Then $D_{\underline{t}^\gamma}(\underline{x},s)$ defines a regular function \(U_0\to\mathbb{C}\) with parameter \(\underline{x}\).
	Let 
	\begin{equation*}
		\bar{U}_0:=\{(\underline{x},s)\in U_0\mid\Re(s)\geq x^{-1}\},
	\end{equation*}
	and let \(\gamma':=\sum_{i:l_i(\underline{x})=x}1\).
	Then there exists some regular functions $f_0(\underline{x},s),\dots,f_{\gamma'}(\underline{x},s):\bar{U}_0\to\mathbb{C}$ with parameters \(\underline{x}\), and $f_{\gamma'}$ can be chosen to be a positive constant, and such that
	\[D_{\underline{t}^\gamma,\underline{x}}(s)=\sum_{i=0}^{\gamma'}f_{i,\underline{x}}(s)\Bigl(\log\frac{1}{s-x^{-1}}\Bigr)^{i}.\]
\end{proposition}
\begin{proof}
	For each $\underline{x}\in\mathbb{R}_+^l$, the series $D(\underline{x},s)$ is absolutely convergent if $\Re(xs)>1$.
	So $D$ defines a continuous function $U_0\to\mathbb{C}$.
	Since each term in the series is holomorphic with respect to $s$, we also see that $D(s)$ is a regular function in $\Re(s)>x^{-1}$ for each $\underline{x}$.
	
	If $\gamma=1$, then 
	\[D_{t}(\underline{x},s)=\sum_{p\in\mathcal{P}}a_1(p,\underline{x},s)p^{-t(\underline{x})s}.\]
	For simplicity, we just assume without loss of generality that \(t(\underline{x})=\pi_1(\underline{x})=x_1\), and write \(D_1:=D_{t}\).
	Define
	\[\tilde{D}_1(\underline{x},s):=\sum_{p\in\mathcal{P}}b_1(p)p^{-x_1s}.\]
	Similar argument also shows that $\tilde{D}_1(\underline{x},s)$ is a continuous function in $U_0$, and for each $\underline{x}\in\mathbb{R}_+^\gamma$, it is a regular function in $\Re(s)>{x_1^{-1}}$.
	Moreover, for each $\underline{x}\in\mathbb{R}_+^\gamma$ and for each $\Re(s)>{x_1^{-1}}$, we have
	\[\lvert D_1(\underline{x},s)-\tilde{D}_1(\underline{x},s)\rvert\ll\sum_{n=1}^\infty p^{-(x_1+\delta_1)\sigma},\]
	which shows that $h(\underline{x},s):=(D_1-\tilde{D}_1)(\underline{x},s)$ is absolutely convergent in $\Re(s)\geq x_1^{-1}$ for each $\underline{x}\in\mathbb{R}_+^{\gamma}$.
	According to \cite[Proposition 5.5]{Wang2022DistributionOT}, we know that there exists some non-zero constant $g_1$ and some regular function $g_0$ in the closed half-plane $\Re(s)\geq1$ such that
	\[\sum_{p\in\mathcal{P}}b_1(p)p^{-s}=g_1(s)\log\frac{1}{s-1}+g_0(s).\]
	So, we have
	\[D_1(\underline{x},s)=h(\underline{x},s)+g_0(x_1s)+g_1(x_1 s)\log\frac{1}{x_1s-1},\]
	which concludes the proof when $\gamma=1$.
	Provided that the statement is true for all $1,\dots,\gamma-1$.
	Let \((\underline{x},s)\in U_0\).
	Define for any coefficient \(a\) and any projection map \(\pi(\underline{x})=x_j\), where \(1\leq j\leq l\), the following notations:
	\[D_{a,\pi}(\underline{x},s):=\sum_{p\in\mathcal{P}}a(p,\underline{x},s)p^{-\pi(\underline{x})s}.\]
	Then
	\begin{equation*}
		\begin{aligned}
			&D_{\underline{l}^{\gamma-1}}(\underline{x},s)D_{a_{\gamma},l_\gamma}(\underline{x},s)\\
			=&D_{\underline{l}^{\gamma}}(\underline{x},s)+\sum_{i=1}^{\gamma-1}D^{(i)}_{\underline{l}^{\gamma-1}}(\underline{x},s)
		\end{aligned}
	\end{equation*}
	where
	\begin{equation*}
		D^{(i)}_{\underline{l}^{\gamma-1}}(\underline{x},s):=\sum_{\substack{
				\underline{p}\in\mathcal{P}^{\gamma-1}\\
				p_1<\cdots<p_{\gamma-1}
		}}a_1p_1^{-l_1(\underline{x})s}\cdots a_{i}a_{\gamma}p_i^{-(l_i+l_{\gamma})(\underline{x})s}\cdots a_{\gamma-1}p^{-l_{\gamma-1}(\underline{x})s}.
	\end{equation*}
	Note that $a_ia_{j}$ is also a coefficient function of main term \(b_ib_{j}\).
	So, for each $i=1,\dots,\gamma-1$, the series \(D^{(i)}_{\gamma-1}(\underline{x},s)\) satisfies the condition of our induction assumption.
	So the analytic properties of $D$ is proved by induction on $\gamma$.
\end{proof}

\section{Abelian fields}
Let $G$ be a finite abelian group in this section.
By viewing $G$ as a transitive permutation group according to its action on itself, we see that $\mathcal{S}:=\mathcal{S}(G)$ is just the set of abelian $G$-fields.
In Section~\ref{section:invariant} Definition~\ref{def:invariant general} and Definition~\ref{def:invariant of abelian fields}, we've explained how to define a counting function \(\Theta\) of $\mathcal{S}$, and we are going to prove some results on counting fields with respect to \(\mathcal{S}(\Theta)\), the set of abelian \(G\)-fields ordered by \(\Theta\).
\subsection{Invariant on abelian fields}
Since we require that the weight function \(\nu:G\to\mathbb{R}_{\geq0}\) satisfy the condition that \(\nu(g)=\nu(h)\) if \(g\sim h\) (equivalent under invertible powering), we see that \(\nu^{-1}(x)\) where \(x\in\mathbb{R}_{\geq0}\) gives a partition of the group \(G\).
Of course, it may happen that \(\nu(g)=\nu(h)\) even if \(g\) and \(h\) are not equivalent.
For example \(\nu(g)=1\) for all \(g\neq1\).
In this section, we restrict ourselves to the case when the partition
\begin{equation*}
	G=\bigcup_{x\geq0}\nu^{-1}(x)
\end{equation*}
does not change.
To be precise, we make the following notation.
\begin{definition}
	Let \(G/\sim:=\{1,g_1,\dots,g_{l'}\}\) be a set of representatives of the equivalence classes of \(G\) under invertible powering.
	Let \(\{S_0=\{1\},S_1,\dots,S_l\}\) be a set of subsets of \(G/\sim\) such that
	\begin{equation*}
		G/\sim=\bigsqcup_{i=0}^l S_i.
	\end{equation*}
	We call \(\nu\) a weight function with respect to \(\{S_0,\dots,S_l\}\) if \(\nu(g_i)=\nu(h_i)\) for all \(g_i,h_i\in S_i\), where \(i=1,\dots,l\), and if \(\nu(g_i)<\nu(g_j)\) for all \(g_i\in S_i\) and \(g_j\in S_j\), where \(0\leq i<j\leq l\).
	In this case, write \(x_i:=\nu(g_i)\) for all \(g_i\in S_i\) and \(i=1,\dots,l\).
\end{definition}
A simple example is, of course, \(\{S_0=\{1\},S_1=\{g_1,\dots,g_{l'}\}\}\), and \(\nu(g)=1\) for all \(g\neq1\).
\begin{notation}
	Fix a partition \(G/\sim=\bigsqcup_{i=0}^l S_i\) and let \(\nu\) be a weight function with respect to \(\{S_0,\dots,S_l\}\).
	Define the following subsets:
	\begin{equation*}
		\begin{aligned}
			V:=&\{(x_1,\dots,x_l)\in\mathbb{R}_+^l\mid x_1<x_2<\cdots<x_l\}\\
			U:=&\{(x_1,\dots,x_l,s)\in V\times\mathbb{C}\mid\sigma>0\}\\
			U_0:=&\{(x_1,\dots,x_l,s)\in U\mid\sigma>x_1^{-1}\}\\
			\bar{U}_0:=&\{(x_1,\dots,x_l,s)\in U_0\mid\sigma\geq x_1^{-1}\}.
		\end{aligned}
	\end{equation*}
\end{notation}
To determine \(\Theta\), we only need to specify \(\vartheta\) at primes \(p\mid\lvert G\rvert\).
For simplicity, we just define \(\vartheta\) for the morphisms \(\rho:\mathbb{Z}_p^*\to G\).
\begin{definition}\label{def:invariant of morphisms}
	Let \(p\nmid\infty\) be a finite prime of \(\mathbb{Q}\).
	If \(\rho:\mathbb{Z}_p^*\to G\), is a local homomorphism, then define
	\begin{equation*}
		\vartheta(\rho):=\min_{\substack{
				T\subseteq\mathbb{Z}_p^*\\
				\langle T\rangle=\mathbb{Z}_p^*
			}}\sum_{g\in T}\nu(g).
	\end{equation*}
	For any lift \(\tilde{\rho}:\mathbb{Q}_p^*\to G\), just let \(\vartheta(\tilde{\rho}):=\vartheta(\rho)\).
\end{definition}
Of course, this notation is consistent when we apply it to \(\Sigma_p\) when \(p\nmid\lvert G\rvert\infty\), because \(\Sigma_p\) corresponds to a local map \(\rho_p:\mathbb{Q}_p^*\to G\), and the tame inertia of \(\Sigma_p\) is exactly determined by the image of \(\mu(\mathbb{Q}_p)\).
Note that by Lemma~\ref{lemma:CFT of Q}, we know that \(\operatorname{Hom}(\prod_{p\nmid\infty}\mathbb{Z}_p^*,G)\) corresponds to \(\operatorname{Hom}(\operatorname{C}_{\mathbb{Q}},G)\), and surjective ones corresponds to Artin reciprocity maps.
Now for each Artin reciprocity map \(\rho\), we have
\begin{equation*}
	\Theta(\rho)=\vartheta(\rho\vert_p).
\end{equation*}
\subsection{Statement of the results on counting fields}
Let \(\Omega\) be a non-empty subset of \(G\) that is closed under invertible powering.
Since each \(\vartheta\) is computed by \(\nu\), we know that \(\Theta=\Theta_{\underline{x}}\) is a counting function with parameter \((x_1,\dots,x_l)\in\mathbb{R}_+\).
So we just denote by \(\mathcal{S}(\underline{x})\) the set of abelian \(G\)-fields ordered by \(\Theta\).
Write
\begin{equation*}
	N_{\underline{x}}(t):=N(\mathcal{S}(\underline{x});t).
\end{equation*}
For each integer \(\gamma\geq0\) let
\begin{equation*}
	N_{\underline{x},\gamma}(t):=N(\mathcal{S}(\underline{x});(\Omega,\gamma);t).
\end{equation*}
It will be shown later that the value \(\min\{\nu(g)\mid\langle g\rangle\cap\Omega\neq\emptyset\}\) compared to \(x_1\) is very important.
So let's make the following notation.
\begin{definition}
	Define
	\begin{equation*}
		x:=\min\{\nu(g)\mid\langle g\rangle\cap\Omega=\emptyset\}\quad\text{and}\quad y:=\min\{\nu(g)\mid\langle g\rangle\cap\Omega\neq\emptyset\}.
	\end{equation*}
\end{definition}
Clearly \(x_1=\min\{x,y\}\).
It is clear that \(\Theta(K)\) is contained in the index set
\[S(\underline{x}):=\{d=e_1^{x_1}\cdots e_l^{x_l}\text{ where }e_1,\dots,e_l\in\mathbb{Z}_+\}\]
for all \(K\in\mathcal{S}\).
We need a technical notation to describe the difference between $\operatorname{Hom}(\prod_p\mathbb{Z}_p^*,G)$ and $\operatorname{Sur}(\prod_p\mathbb{Z}_p^*,G)$ when it comes to field counting.
For example, in the sense of asymptotic behaviour, there is no difference between these two concept when $G\cong C_q$ where $q$ is a rational prime.
But in general, the result of counting non-surjective homomorphisms may give a even larger main term than counting surjective ones.
\begin{definition}\label{def:delta0}
	\begin{enumerate}
		\item Let \(\rho:\prod_{p\nmid\infty}\mathbb{Z}_p^*\to G\) be a surjective homomorphism.
		Define
		\begin{equation*}
			\delta(\rho):=\#\{p\nmid\lvert G\rvert:\mathbf{1}_{(\Omega)}(\rho\vert_p)=1\text{ and }\vartheta_{\underline{x}}(\rho\vert_p)>p^{y}\}.
		\end{equation*}
		Let \(\delta:=\min_{\rho}\{\delta(\rho)\}\) where \(\rho:\prod_{p\nmid\infty}\mathbb{Z}_p^*\to G\) runs over all surjective homomorphisms.
		\item Let \(\gamma(\rho)\) be the integer such that \(\mathbf{1}_{(\Omega,\gamma)}(\rho)=1\), and let \(\gamma_0:=\min_{\rho}\gamma(\rho)\) where \(\rho\) runs over all surjective homomorphisms such that \(\delta(\rho)=\delta\).
	\end{enumerate}	
\end{definition}
Inspired by Wood~\cite[Theorem 3.1]{wood2010probabilities}, we make the following definition.
\begin{definition}\label{def:count abelian fields}
	For an element $g\in G$, we denote its order by $r(g)$.
	Let $\Lambda$ be a subset of $G$ closed under invertible powering. Define
	\[\beta(\Lambda):=\sum_{\operatorname{id}\neq g\in\Lambda}\phi(r(g))^{-1}.\]
	In particular, let \(\mathfrak{M}:=\{g\in G\mid\nu^{-1}(x)\text{ and }\langle g\rangle\cap\Omega=\emptyset\}\), and define
	\[\beta:=\beta(\mathfrak{M}).\]		
\end{definition}
Since our discussion is based on the weight functions associated to a specific choice \(G/\sim=\bigsqcup_{i=0}^lS_i\), we see that \(\mathfrak{M}\) remains the same for all \(\underline{x}\in V\) and \(\beta\) is just a constant (independent of the choice of \(\underline{x}\in V\)).
The main theorem of this section can be stated as follows.
\begin{theorem}\label{thm:counting abelian fields}
	We have that
	\begin{equation*}
		N_{\underline{x}}(t)\sim F(\underline{x})t^{x_1^{-1}}(\log t)^{\beta-1},
	\end{equation*}
	when \(t\to\infty\), where \(F\) is a non-zero continuous function on \(V\).
	Let \(\gamma>\gamma_0\) be an integer.
	We have that
	\begin{equation*}
		N_{\underline{x},\gamma}(t)\sim\left\{\begin{aligned}
			&f(\underline{x})t^{x_1^{-1}}(\log t)^{-1}(\log\log t)^{\gamma-\delta-1}\quad&\text{if }y<x\\
			&g(\underline{x})t^{x_1^{-1}}(\log t)^{\beta-1}(\log\log t)^{\gamma-\delta'}\quad&\text{else if }y=x\\
			&h(\underline{x})t^{x_1^{-1}}(\log t)^{\beta-1}\quad&\text{else if }y>x,
		\end{aligned}\right.
	\end{equation*}
	where \(\delta'=\delta\) if \(\beta>0\) and \(\delta'=\delta+1\) otherwise, and \(f,g,h\) are all continuous functions on \(V\).
\end{theorem}
Recall also from Definition~\ref{def:function R} that we have defined a function $R$ (or the expression): 
\[R^{\gamma_1}_{\gamma_2}(\underline{x})=\lim_{t\to\infty}\frac{N_{\underline{x},\gamma_1}(t)}{N_{\underline{x},\gamma_2}(t)}.\]
\begin{theorem}\label{thm:function R abelian case}
	Let $\Omega\subseteq G$ be a nonempty subset closed under invertible powering.
	Let $\gamma_2>\gamma_1\geq\gamma_{0}$ be two integers.
	Then $R^{\gamma_1}_{\gamma_2}(\underline{x})$ is a continuous function on $V$.
	In addition, if $y>x_1$, then $R^{\gamma_1}_{\gamma_2}(\underline{x})>0$.
	Else if $y=x_1$, then $R^{\gamma_1}_{\gamma_2}(\underline{x})=0$.
\end{theorem}
\begin{proof}
	If $y>x$, according to Theorem~\ref{thm:counting abelian fields}, we have
	\[\begin{aligned}
		R^{\gamma_1}_{\gamma_2}(\underline{x})=&\lim_{t\to\infty}\frac{N_{\underline{x},\gamma_1}(t)}{N_{\underline{x},\gamma_2}(t)}\\
		=&\lim_{t\to\infty}\frac{f_{\gamma_1}(\underline{x})\Gamma(\beta)^{-1}t^{x_1^{-1}}(\log t)^{\beta-1}}{f_{\gamma_2}(\underline{x})\Gamma(\beta)^{-1}t^{x_1^{-1}}(\log t)^{\beta-1}}\\
		=&\frac{f_{\gamma_1}(\underline{x})}{f_{\gamma_2}(\underline{x})}
	\end{aligned}\]
	We can see that $R^{\gamma_1}_{\gamma_2}(\underline{x})>0$ because $f_{\gamma_i}(\underline{x})>0$ for each $\underline{x}\in V$.
	Moreover, $f_{\gamma_i}(\underline{x}):V\to\mathbb{C}$ is a continuous function, so is $R^{\gamma_1}_{\gamma_2}(\underline{x})$.
	Similar argument works for the case when $y=x$ and \(y<x\) except that $R^{\gamma_1}_{\gamma_2}(\underline{x})=0$ for all $\underline{x}\in V$ in these cases.
\end{proof}
\subsection{Generating series}
Recall that for integers $m,n$ such that \(\gcd(m,n)=1\), we've defined the notations 
\begin{equation*}
	\mathcal{P}(m,n)=\{p\in\mathcal{P}\mid p\equiv m\bmod{n}\}\quad\text{and}\quad\mathcal{P}(n)=\{p\in\mathcal{P}\mid p\nmid n\}.
\end{equation*}
We define for each $\underline{x}\in V$ and each natural number $\gamma=0,1,\dots$, the following generating series
\begin{equation*}
	\mu_{\gamma}(\underline{x},s):=
	\sum_{\rho:\prod_{p\nmid\infty}\mathbb{Z}_p^*\to G}
	\mathbf{1}_{(\Omega,\gamma)}(\rho)(\Theta_{\underline{x}}(\rho))^{-s}
	\quad\text{and}\quad
	\mu(\underline{x},s):=
	\sum_{\rho:\prod_{p\nmid\infty}\mathbb{Z}_p^*\to G}
	(\Theta_{\underline{x}}(\rho))^{-s}
\end{equation*}
And let
\[\pi_{\gamma}(\underline{x},s):=
\sum_{K\in\mathcal{S}}
\mathbf{1}_{(\Omega,\gamma)}(K)(\Theta_{\underline{x}}(K))^{-s}
\quad\text{and}\quad
\pi(\underline{x},s):=
\sum_{K\in\mathcal{S}}
(\Theta_{\underline{x}}(K))^{-s}.\]
For the purpose of simplifying the notations, let \(\mu_{-1}:=\mu\) and \(\pi_{-1}:=\pi\).
Apparently \(\mathbf{1}_{(\Omega,-1)}(K)=0\) for all \(K\in\mathcal{S}\), so it represents the trivial condition, and we can re-define the indicator in this case as
\begin{equation*}
	\mathbf{1}_{(\Omega,-1)}(K)=1
\end{equation*}
for all \(K\in\mathcal{S}\) so that everything is consistent.
We can do the same for all the homomorphisms \(\rho\in\operatorname{Hom}(\prod\mathbb{Z}_p^*,G)\), i.e.,
\begin{equation*}
	\mathbf{1}_{(\Omega,-1)}(\rho)=1.
\end{equation*}
From Definition~\ref{def:invariant of morphisms}, Lemma~\ref{lemma:CFT of Q}, we can obtain the arithmetic properties of \(\mu_{\gamma}\) and \(\pi_{\gamma}\).
\begin{lemma}\label{lemma:arithmetic of mu and pi}
	Let \(\tilde{\mathcal{S}}(\underline{x}):=(\operatorname{Hom}(\prod_{p\nmid\infty}\mathbb{Z}_p^*,G),\Theta)\) be the set of homomorphisms ordered by \(\Theta\).
	For each $\underline{x}\in V$, and for \(\gamma=-1,0,1,\dots\) if we write 
	\[\mu_{\gamma}(\underline{x},s)=\sum_{d\in S(\underline{x})}a_{d}d^{-s}\quad\text{and}\quad\pi_{\gamma}(\underline{x},s)=\sum_{d\in S(\underline{x})}b_{d}d^{-s},\]
	then
	\[\begin{aligned}
		&\tilde{N}_{\underline{x},\gamma}(t):=N(\operatorname{Hom}(\tilde{\mathcal{S}}(\underline{x});(\Omega,\gamma);t)=\sum_{d<t}a_{d}\\
		&N_{\underline{x},\gamma}(t)=\sum_{d<t}b_{d}.
	\end{aligned}\]
	In addition, $\mu_{\gamma}(\underline{x},s)\geq\pi_{\gamma}(\underline{x},s)$, i.e., $a_d\geq b_d$ for all $d\in S$.
\end{lemma}
Then we can state the result on the analytic continuation of $\pi_{\gamma}(s)$.
\begin{theorem}\label{thm:analytic continuation of pi(s)}
	For each natural number $\gamma=-1,0,1,\dots$, the expression $\pi_{\gamma}(\underline{x},s):U_0\to\mathbb{C}$ defines a regular function with parameter \(\underline{x}\) (see Definition~\ref{def:multivariable functions}).
	If \(\gamma=-1\) or \(\gamma>\gamma_0\) and $y>x$, then there exists regular functions $f(\underline{x},s),f_0(\underline{x},s):\bar{U}_0\to\mathbb{C}$ with parameter \(\underline{x}\) such that \(f_0(\underline{x},x_1^{-1})\neq0\) and such that
	\[\pi_{\gamma,\underline{x}}(s)=f_{\underline{x}}(s)+f_{0,\underline{x}}(s)(s-x_1^{-1})^{\beta(\nu^{-1}(x_1))}.\]
	Else if \(\gamma>\gamma_0\) and $y\leq x$, then there exists regular functions $f(\underline{x},s),f_0(\underline{x},s),\dots,f_{\gamma-\delta}(\underline{x},s):\bar{U}_0\to\mathbb{C}$ with parameter \(\underline{x}\) such that \(f_{\gamma-\delta}\) could be chosen to be a positive constant and such that
	\[\pi_{\gamma,\underline{x}}(s)=f_{\underline{x}}(s)+(s-x^{-1})^{\beta}\sum_{j=0}^{\gamma-\delta}f_{j,\underline{x}}(s)\big(\log\frac{1}{s-y^{-1}}\big)^{j}.\]
\end{theorem}

\subsection{Analytic properties of \texorpdfstring{$\mu$}{mu}}
We prove the statement of Theorem~\ref{thm:analytic continuation of pi(s)} step by step.
The series \(\mu_{\gamma}\), in the realm of absolute convergence, could be written as an Euler product in the following sense.
\begin{lemma}\label{lemma:euler product of mu}
	In the set \(U_0\), when \(\gamma\geq0\), we have
	\begin{equation*}
		\mu_{\gamma}(\underline{x},s)=W(\underline{x},s)\sum_{\substack{
				\underline{p}\in\mathcal{P}(\lvert G\rvert)^{\gamma}\\
				p_1<\cdots<p_{\gamma}
		}}\prod_{j=1}^{\gamma}\Bigl(\sum_{\rho:\mathbb{Z}_p^*\to G}\mathbf{1}_{\Omega}(\rho)p_j^{-\vartheta_{\underline{x}}(\rho)s}\Bigr)\cdot\prod_{\substack{
				p\in\mathcal{P}(\lvert G\rvert)\\
				p\nmid p_1\cdots p_\gamma
		}}\Bigl(\sum_{\rho:\mathbb{Z}_p^*\to G}(1-\mathbf{1}_{\Omega}(\rho))p^{-\vartheta_{\underline{x}}(\rho)s}\Bigr),
	\end{equation*}
	where \(W(\underline{x},s):U\to\mathbb{C}\) is a regular function with parameter \(\underline{x}\).
	Similarly results hold for \(\mu=\mu_{-1}\), i.e., there exists some \(W\) such that
	\begin{equation*}
		\mu(\underline{x},s)=W(\underline{x},s)
		\sum_{p\nmid\lvert G\rvert\infty}\prod_{j=1}^{\gamma}\Bigl(\sum_{\rho:\mathbb{Z}_p^*\to G}p_j^{-\vartheta_{\underline{x}}(\rho)s}\Bigr).
	\end{equation*}
\end{lemma}
\begin{proof}
	By abuse of notation, we drop the restriction that \(\Omega\neq\emptyset\) in the proof and let \(\mu(\underline{x})=\mu_{\gamma}(\underline{x},s)\) for all \(\gamma=0,1,\dots\) when \(\Omega=\emptyset\).
	The function \(\mu_{\gamma}\) can be first rewritten as
	\begin{equation*}
		\begin{aligned}
			\mu_{\gamma}(\underline{x},s)=&
			\Bigl(\sum_{\rho':\prod_{p\mid\lvert G\rvert}\mathbb{Z}_p^*\to G}\mathbf{1}_{(\Omega,\gamma)}^{\delta(\gamma)}(\rho')(\Theta_{\underline{x}}(\rho'))^{-s}\Bigr)
			\Bigl(\sum_{\rho'':\prod_{p\nmid\lvert G\rvert\infty}\mathbb{Z}_p^*\to G}\mathbf{1}_{(\Omega,\gamma)}(\rho'')(\Theta_{\underline{x}}(\rho''))^{-s}\Bigr)\\
			=:&W(s)\cdot\sum_d a_dd^{-s},
		\end{aligned}		
	\end{equation*}
	where \(\delta(x)=1\) when \(x=0\) and \(\delta(x)=0\) otherwise.
	Because every homomorphism \(\rho:\prod_{p\nmid\infty}\mathbb{Z}_p^*\to G\) is a product of \(\rho':\prod_{p\mid\lvert G\rvert}\mathbb{Z}_p^*\to G\) and \(\rho'':\prod_{p\nmid\lvert G\rvert\infty}\mathbb{Z}_p^*\to G\).
	Since the number of primes \(p\mid\lvert G\rvert\) is finite, we see that \(W\) is a finite sum of the form
	\begin{equation*}
		\Theta_{\underline{x}}(\rho')^{-s}=\prod_{p\mid\lvert G\rvert}p^{-\vartheta_{\underline{x}}(\rho'\vert_p)s}.
	\end{equation*}
	So, \(W\) satisfies the properties described in the statement of the lemma.
	It then suffices to show that the identity
	\begin{equation*}
		\sum_{d\in S(\underline{x})}a_dd^{-s}=\sum_{\substack{
				\underline{p}\in\mathcal{P}(\lvert G\rvert)^{\gamma}\\
				p_1<\cdots<p_{\gamma}
		}}\prod_{j=1}^{\gamma}\Bigl(\sum_{\rho:\mathbb{Z}_p^*\to G}\mathbf{1}_{\Omega}(\rho)p_j^{-\vartheta_{\underline{x}}(\rho)s}\Bigr)\cdot\prod_{\substack{
				p\in\mathcal{P}(\lvert G\rvert)\\
				p\nmid p_1\cdots p_\gamma
		}}\Bigl(\sum_{\rho:\mathbb{Z}_p^*\to G}(1-\mathbf{1}_{\Omega}(\rho))p^{-\vartheta_{\underline{x}}(\rho)s}\Bigr).
	\end{equation*}	
	If \(\Omega=\emptyset\), then
	\begin{equation*}
		\sum_{\Theta_{\underline{x}}(\rho)=d}1=\sum_{\substack{
				\rho_{p_1},\cdots,\rho_{p_n}\text{ nontrivial}\\
				\prod\vartheta_{\underline{x}}(\rho_{p_j})=d
		}}1=a_d.
	\end{equation*}
	where the sum is taken over all possible nontrivial local homomorphisms \(\rho_{p_j}:\mathbb{Z}_{p_j}^*\to G\) up to permutation.
	On the other hand, by similar idea, we have
	\begin{equation*}
		\sum_{\substack{
				\Theta_{\underline{x}}(\rho)=d\\
				\mathbf{1}_{(\Omega,\gamma)}(\rho)=1
		}}1=\sum_{\substack{
				\underline{p}\in\mathcal{P}(\lvert G\rvert)^{\gamma}\\
				p_1<\cdots<p_{\gamma}
		}}\sum_{\substack{
				\underline{q}\in\mathcal{P}(p_1\cdots p_{\gamma})^n\\
				q_1<\cdots<q_n
		}}\sum_{\substack{
				\prod\vartheta_{\underline{x}}(\rho_{p_i})\\
				\cdot\prod\vartheta_{\underline{x}}(\rho_{q_j})=d
		}}\prod_{i=1}^{\gamma}\mathbf{1}_{\Omega}(\rho_{p_i})\prod_{j=1}^n(1-\mathbf{1}_{\Omega}(\rho_{q_j}))=a_d		
	\end{equation*}
	where \(n\geq0\) is a non-negative integer, and \(\rho_{p_i},\rho_{q_j}\) runs over all non-trivial local homomorphisms.
	And we are done.
\end{proof}
Let's prove the analytic continuation of \(\mu\) and $\mu_{\gamma}$.
\begin{lemma}\label{lemma:analytic continuation of mu 2}
	The expression $\mu_{\gamma}(\underline{x},s):U_0\to\mathbb{C}$ defines a regular function with parameter \(\underline{x}\).
	There exists regular functions $f(\underline{x},s),g_0(\underline{x},s),\dots,g_{\gamma}(\underline{x},s):\bar{U}_0\to\mathbb{C}$ with parameter \(\underline{x}\) such that $g_{\gamma}(\underline{x},y^{-1})$ is some positive constant and $f(\underline{x},x^{-1})\neq0$, and such that
	\[\mu_{\gamma,\underline{x}}(s)=f_{\underline{x}}(s)(s-x^{-1})^{\beta}\sum_{j=0}^{\gamma}g_{j,\underline{x}}(s)\big(\log\frac{1}{s-y^{-1}}\big)^j.\]
\end{lemma}
\begin{proof}
	Let's first rewrite the series.
	By comparing with a large enough power of Riemann-zeta function, we see that for each $\underline{x}$, the complex function $\mu_{\gamma}(\underline{x},x_1s)$ is absolutely convergent in $\sigma>1$, hence $\mu_{\gamma}(\underline{x},s)$ is a regular function in the open half-plane $\sigma>x_1^{-1}$.
	We first write
	\begin{equation*}
		\sum_{\rho:\mathbb{Z}_p^*\to G}(1-\mathbf{1}_{\Omega}(\rho))p^{-\vartheta_{\underline{x}}(\rho)s}=a(p,\underline{x},s)p^{-x s},
	\end{equation*}
	where $a$ is a coefficient function of main term
	\[\sum_{\substack{
			\rho:\mathbb{Z}_p^*\to G\\
			\vartheta_{\underline{x}}(\rho)=x
	}}1-\mathbf{1}_{\Omega}(\rho).\]
    It is clear that this main term is a function modulo \(\lvert G\rvert\).
	Then in $U$, we can rewrite the series as
	\[\begin{aligned}
		\mu_{\gamma}(\underline{x},s)=&W(\underline{x},s)\prod_{p\in\mathcal{P}(\lvert G\rvert)}(1+a(p,\underline{x},s)p^{-x_0 s})\\
		\cdot&\prod_{\substack{
				\underline{p}\in\mathcal{P}^{\gamma}(\lvert G\rvert)\\
				p_1<\cdots<p_{\gamma}
		}}\prod_{j=1}^{\gamma}\Bigl(\sum_{\rho:\mathbb{Z}^*_{p_j}\to G}\frac{\mathbf{1}_{\Omega}(\rho)p_j^{-\vartheta_{\underline{x}}(\rho)s}}{1+a(p_j,\underline{x},s)p_j^{-x_0 s}}\Bigr).
	\end{aligned}\]
	Since for each $p\nmid\lvert G\rvert$, we have
	\begin{equation*}
		\begin{aligned}
			\sum_{\rho:\mathbb{Z}_p^*\to G}\frac{\mathbf{1}_{\Omega}(\rho)p^{-\vartheta_{\underline{x}}(\rho)s}}{1+a(p,\underline{x},s)p^{-x_0 s}}=&p^{-ys}\frac{\sum_{\rho:\mathbb{Z}_p^*\to G}p^{-(\vartheta_{\underline{x}}-y)s}}{1+a(p,\underline{x},s)p^{-x_0 s}}\\
		=&b(p,\underline{x},s)p^{-ys}
		\end{aligned}
	\end{equation*}
	where \(b(p,\underline{x},s)\) is a coefficient function of main term
	\[\sum_{\substack{
			\rho:\mathbb{Z}_p^*\to G\\
			\vartheta_{\underline{x}}(\rho)=y
		}}1.\]
	Similarly, this main term is also a function modulo \(\lvert G\rvert\).
	By the map \(x_i\mapsto x_i'\) where \(x_1=x_1\) and \(x_i'=x_i-x_1\) for all \(i=2,\dots,l\), and by Proposition~\ref{prop:count (q,a) primes}, we see that there exists $g_0,\dots,g_{\gamma}$ satisfying the conditions in the statement of the lemma, such that
	\[\prod_{\substack{
			\underline{p}\in\mathcal{P}^{\gamma}(\lvert G\rvert)\\
			p_1<\cdots<p_{\gamma}
	}}\prod_{j=1}^{\gamma}b(p_j,\underline{x},s)p_j^{-ys}=\sum_{j=1}^{\gamma}g_j(\underline{x},s)\Bigl(\log\frac{1}{s-y^{-1}}\Bigr)^{\gamma}.\]
    Then Proposition~\ref{prop:analytic continuation of zeta(q,a)} finishes the proof except for the formula of analytic continuation.
    Proposition~\ref{prop:analytic continuation of zeta(q,a)} implies that
    \[\prod_{p\in\mathcal{P}(\lvert G\rvert)}(1+a(p,\underline{x},s)p^{-x s})^{\phi(\lvert G\rvert)}=g(\underline{x},s)(s-x^{-1})^{\beta'},\]
    where $g:\bar{U}_0\to\mathbb{C}$ is a regular function with parameter and non-zero everywhere, and
    \[\begin{aligned}
    	\beta'=&\sum_{\substack{
    			n\in(\mathbb{Z}/\lvert G\rvert)^*\\
    			p\equiv n\bmod{\lvert G\rvert}
    	}}\sum_{\substack{
    	    \rho:\mathbb{Z}_p^*\to G\\
    	    \vartheta_{\underline{x}}(\rho)=x
    	}}1-\mathbf{1}_{\Omega}(\rho)=\sum_{n\in(\mathbb{Z}/\lvert G\rvert)^*}\sum_{\substack{
        g\in\mathfrak{M}\\
        n\equiv1\bmod{\phi(r(g))}
    }}1\\
   	=&\sum_{g\in\mathfrak{M}}\sum_{\substack{
   			n\in(\mathbb{Z}/\lvert G\rvert)^*\\
   			n\equiv1\bmod{\phi(r(g))}
   		}}1=\sum_{g\in\mathfrak{M}}\frac{\phi(\lvert G\rvert)}{\phi(r(g))}=\phi(\lvert G\rvert)\beta.
    \end{aligned}\]
	So there exists $f$ satisfying the desired properties in the statement such that
	\[W(\underline{x},s)\prod_{p\in\mathcal{P}(\lvert G\rvert)}(1+a(p,\underline{x},s)p^{-x s})=f(\underline{x},s)(s-x^{-1})^{-\beta}.\]
	And we are done for the proof.
\end{proof}
\subsection{Estimate of field-counting}
Then we construct some generating series to estimate the bounds of field-counting.
We need to construct a generating series $\psi_{\gamma}(s)$ such that $\psi_{\gamma}(s)\geq\pi_{\gamma}(s)$.
Because $\mu_{\gamma}(s)$ gives too large an estimate for $N_{\underline{x}}(t)$ when $y=x_1$.
Let's introduce some notations.
\begin{definition}\label{def:admissible partition}
	Let \(\bar{\Omega}:=\{g\in G\mid\langle g\rangle\cap\Omega\neq\emptyset\}\), and let \(\{h_1,\dots,h_\omega\}\) be the set of representatives of \(\bar{\Omega}\) under the equivalence relation given by invertible powering.
	For each homomorphism \(\rho:\prod_{p\nmid\infty}\mathbb{Z}_p^*\to G\), and for each \(i=1,\dots,\omega\), define 
	\begin{equation*}
		n_i(\rho):=\#\{p\nmid\lvert G\rvert\infty:\rho(\zeta_p)\sim h_i\}.
	\end{equation*}
	If \(\gamma\geq\gamma_0\) is an integer, then we say that the partition \(\gamma=n_1+\cdots+n_{\omega}\) into non-negative integers is \emph{admissible} if there exists some surjective \(\rho\in\operatorname{Sur}(\prod_{p\nmid\lvert G\rvert}\mathbb{Z}_p^*,G)\) such that \(n_i(\rho)=n_i\) for all \(i=1,\dots,\omega\).
\end{definition}
\begin{lemma}\label{lemma:admissible partition}
	If \(\gamma\geq\gamma_0\), then there exists surjective homomorphisms \(\rho:\prod_{p\nmid\infty}\mathbb{Z}_p^*\to G\) such that \(\delta(\rho)=\delta\).
\end{lemma}
\begin{proof}
	It is clear that if \(\gamma=\gamma_0\), then there exists at least one \(\rho_1\in\operatorname{Sur}(\prod_{p\nmid\infty}\mathbb{Z}_p^*,G)\) such that \(\gamma(\rho_1)=\gamma\), which is just the definition of \(\gamma_0\).
	If \(\gamma>\gamma_0\), then let \(\gamma':=\gamma-\gamma_0\).
	Choose finite primes \(p_1,\dots,p_{\gamma'}\) such that \(p_i\equiv1\bmod\lvert G\rvert\), for all \(i=1,\dots,\gamma'\).
	Let \(g\in\Omega\) be any element such that \(\nu(g)=y\).
	For each \(p_i\),  let \(\rho_{p_i}:\mathbb{Z}_{p_i}^*\to G\) be the homomorphism defined by \(\rho_{p_i}(\zeta_{p_i})=g\).
	Define \(\rho:=\rho_1\cdot\prod_{i=1}^{\gamma'}\rho_{p_i}\).
	Then \(\gamma(\rho)=\gamma_0+\gamma'=\gamma\).
	Moreover, \(\vartheta(\rho\vert_{p_i})=p^{y}\) for all \(i=1,\dots,\gamma'\).
	Therefore \(\delta(\rho)=\delta(\rho_1)=\delta\).
\end{proof}
\begin{lemma}\label{lemma:psi}
	Let \(\gamma>\gamma_0\) be a fixed integer.
	There exists a generating series \(\psi_{\gamma}(\underline{x},s)=\sum_{d}a_dd^{-s}\) such that \(\psi_{\gamma}(\underline{x},s)\geq\pi_{\gamma}(\underline{x},s)\) and such that it defines a regular function \(U_0\to\mathbb{C}\) with parameter \(\underline{x}\).
	Moreover, there exists regular functions $f,g_{0},\dots,g_{\gamma-\delta}:\bar{U}_0\to\mathbb{C}$ with parameter such that $g_{\gamma-\delta}$ can be chosen to be a non-zero constant and $f_{\underline{x}}(x^{-1})\neq0$, and such that
	\[\psi_{\gamma,\underline{x}}(s)=f_{\underline{x}}(s)(s-x^{-1})^{-\beta}\sum_{j=0}^{\gamma-\delta}g_{j,\underline{x}}(s)\big(\log\frac{1}{s-y^{-1}}\big)^{j}.\]
\end{lemma}
\begin{proof}
	Let's construct a generating series as follows.
	\begin{equation*}
		\psi_{\gamma}(\underline{x},s):=\sum_{\substack{
				\rho:\prod_{p\nmid\infty}\mathbb{Z}_p^*\to G\\
				\gamma=\sum n_i(\rho)\text{ admissible}
			}}\Theta_{\underline{x}}(\rho)^{-s}.
	\end{equation*}
	From the definition we see that \(\psi_{\gamma}(\underline{x},s)\geq\pi_{\gamma}(\underline{x},s)\) for each \(\underline{x}\in V\), because every surjective homomorphism \(\rho\) satisfies the condition that \(\gamma=\sum n_i(\rho)\) is admissible.
	Let \(\gamma=n_1+\cdots+n_\omega\), with \(\underline{n}:=(n_1,\dots,n_\omega)\), be an admissible partition, and let \(\underline{n}(\rho):=(n_1(\rho),\dots,n_\omega(\rho))\).
	Define
	\begin{equation*}
		\psi_{\underline{n}}(\underline{x},s):=\sum_{\substack{
				\rho:\prod_{p\nmid\infty}\mathbb{Z}_p^*\to G\\
				\underline{n}(\rho)=\underline{n}
		}}\Theta(\rho)^{-s}.
	\end{equation*}
	By similar method as in the proof of Lemma~\ref{lemma:analytic continuation of mu 2}, we know that there exists some regular function \(W(\underline{x},s):U\to\mathbb{C}\) with parameter \(\underline{x}\) such that
	\begin{equation*}
		\begin{aligned}
			\psi_{\underline{n}}(\underline{x},s):=
			&W(\underline{x},s)
			\prod_{\substack{
					\underline{p}\in\mathcal{P}(\lvert G\rvert)^{\gamma}\\
					p_1<\cdots<p_{\gamma}
			}}
		    \sum_{\substack{
			    \rho:\prod_{i=1}^\gamma\mathbb{Z}_{p_i}^*\to G\\
			    \underline{n}(\rho)=\underline{n}
			}}\Theta(\rho)^{-s}\\
			\cdot&\prod_{\substack{
					p\in\mathcal{P}(\lvert G\rvert)\\
					p\nmid p_1\cdots p_{\gamma}
				}}(\sum_{\rho_p:\mathbb{Z}_p^*\to G}(1-\mathbf{1}_{\Omega}(\rho))p^{-\vartheta_{\underline{x}}(\rho)s})
		\end{aligned}
	\end{equation*}	
	The expression $\psi_{\underline{n}}(\underline{x},s):U_0\to\mathbb{C}$ defines a regular function with parameter \(\underline{x}\), using absolute convergence.
	So, we can write it as \(\psi_{\underline{n}}(\underline{x},s)=\sum_da_dd^{-s}\) when \((\underline{x},s)\in U\).
	It is also clear that
	\begin{equation*}
		\psi(\underline{x},s)=\sum_{\underline{n}\text{ admissible}}\psi_{\underline{n}}(\underline{x},s)
	\end{equation*}
	The proof of the analytic properties of $\psi$ is similar to that of $\mu$.
	We only show the part of analytic continuation here.
	If \(\rho_0:\prod_{p\nmid\infty}\mathbb{Z}_p^*\to G\) is a surjective homomorphism such that \(n_i=n_i(\rho_0)\), then we have
	\begin{equation*}
		\begin{aligned}
			\sum_{\substack{
					\underline{p}\in\mathcal{P}(\lvert G\rvert)^{\gamma}\\
					p_1<\cdots<p_{\gamma}
			}}
		    \sum_{\substack{
					\rho:\prod_{i=1}^\gamma\mathbb{Z}_{p_i}^*\to G\\
					\underline{n}(\rho)=\underline{n}
			}}\Theta(\rho)^{-s}&=
		    \sum_{\substack{
			\underline{p}\in\mathcal{P}(\lvert G\rvert)^{\gamma}\\
			p_1\neq\cdots\neq p_{\gamma}
		}}
	        \prod_{k=1}^{\omega}
	        \prod_{i=n_1+\cdots+n_{k-1}+1}^{n_1+\cdots+n_k}\frac{1}{n_k!}
	        (\sum_{\substack{
		    \rho:\mathbb{Z}_{p_i}^*\to G\\
		    \rho(\zeta_{p_i})\sim h_k
		}}1)p^{-\nu(h_k)s}.
		\end{aligned}		
	\end{equation*}
	By similar method as in the proof Lemma~\ref{lemma:analytic continuation of mu 2}, the function
	\begin{equation}\label{eqn:psi 1}
		\begin{aligned}
			&\sum_{\substack{
					\underline{p}\in\mathcal{P}(\lvert G\rvert)^{\gamma}\\
					p_1\neq\cdots\neq p_{\gamma}
			}}\prod_{k=1}^{\omega}\prod_{i=n_1+\cdots+n_{k-1}+1}^{n_1+\cdots+n_k}(\sum_{\substack{
					\rho:\mathbb{Z}_{p_i}^*\to G\\
					\rho(\zeta_{p_i})\sim h_k
			}}1)p^{-\nu(h_k)s}
		    \prod_{\substack{
					p\in\mathcal{P}(\lvert G\rvert)\\
					p\nmid p_1\cdots p_{\gamma}
			}}
		    (\sum_{\rho_p:\mathbb{Z}_p^*\to G}(1-\mathbf{1}_{\Omega}(\rho))p^{-\vartheta_{\underline{x}}(\rho)s})\\
		=&\prod_{p\in\mathcal{P}(\lvert G\rvert)}(1+a(p,\underline{x},s)p^{-xs})
		\sum_{\substack{
				\underline{p}\in\mathcal{P}(\lvert G\rvert)^{\gamma}\\
				p_1\neq\cdots\neq p_{\gamma}
			}}
		\prod_{i=1}^{\gamma}b_i(p_i,\underline{x},s)p_i^{-y_is}
		\end{aligned}
	\end{equation}
	where \(y_i=\nu(h_{k_i})\) with \(k_i=\min\{1\leq k\leq\omega\mid n_1+\cdots+n_{k-1}<i\leq n_1+\cdots+n_k\}\), and \(a\) is a coefficient of main term
	\begin{equation*}
		\sum_{\substack{
				\rho:\mathbb{Z}_p^*\to G\\
				\vartheta_{\underline{x}}(\rho)=x
			}}1-\mathbf{1}_\Omega(\rho),
	\end{equation*}
	while \(b_i\) is a coefficient of main term
	\begin{equation*}
		\sum_{\substack{
				\rho:\mathbb{Z}_p^*\to G\\
				\rho(\zeta_p)\sim h_{k_i}
		}}1.
	\end{equation*}
	Let \(\rho_0:\prod_{p\nmid\infty}\mathbb{Z}_p^*\to G\) be a surjective homomorphism such that \(n_i=n_i(\rho_0)\), then we have 
	\begin{equation*}
		\sum_{\substack{
				1\leq k\leq\omega\\
				\nu(h_k)=y
			}}1=\gamma-\delta(\rho_0).
	\end{equation*}
	In other words, we actually can re-order the indices and write (\ref{eqn:psi 1}) as
	\begin{equation*}
		\begin{aligned}
			&\prod_{p\in\mathcal{P}(\lvert G\rvert)}(1+a(p,\underline{x},s)p^{-xs})
			\sum_{\substack{
					\underline{p}\in\mathcal{P}(\lvert G\rvert)^{\gamma}\\
					p_1\neq\cdots\neq p_{\gamma}
			}}
		\prod_{i=1}^{\gamma}b_i(p_i,\underline{x},s)p_i^{-y_is}\\
		=&\prod_{p\in\mathcal{P}(\lvert G\rvert)}(1+a(p,\underline{x},s)p^{-xs})
		\sum_{\substack{
					\underline{p}\in\mathcal{P}(\lvert G\rvert)^{\gamma}\\
					p_1\neq\cdots\neq p_{\gamma}
			}}
		\prod_{i=1}^{\gamma-\delta(\rho_0)}b_i(p_i,\underline{x},s)p_i^{-ys}
		\prod_{i=\gamma-\delta(\rho_0)+1}^{\gamma}b_i(p_i,\underline{x},s)p_i^{-y_is},
		\end{aligned}
	\end{equation*} 
	where \(y_i>y\) for all \(i>\gamma-\delta(\rho_0)\).
	We can assume without loss of generality that \(\gamma-\delta(\rho_0)>0\).
	Then Proposition~\ref{prop:analytic continuation of zeta(q,a)} and \ref{prop:count (q,a) primes} says that, there exists regular functions $f_\rho,g_{\rho,0},\dots,g_{\rho,\gamma-\delta(\rho)}:\bar{U}_0\to\mathbb{C}$ with parameter such that $g_{\rho,\gamma-\delta(\rho)}$ can be chosen to be a positive constant and $f_\rho(\underline{x},x_0)\neq0$ and such that
	\[\begin{aligned}
		&\prod_{\substack{
				\underline{p}\in\mathcal{P}(\lvert G\rvert)^{\gamma}\\
				p_1\neq\cdots\neq p_{\gamma}
		}}
	    \prod_{k=1}^{\omega}
	    \prod_{i=n_1+\cdots+n_{k-1}+1}^{n_1+\cdots+n_k}
	    (\sum_{\substack{
				\rho:\mathbb{Z}_{p_i}^*\to G\\
				\rho(\zeta_{p_i})\sim h_k
		}}1)p^{-\nu(h_k)s}
	    \prod_{\substack{
				p\in\mathcal{P}(\lvert G\rvert)\\
				p\nmid p_1\cdots p_{\gamma}
		}}
	    (\sum_{\rho_p:\mathbb{Z}_p^*\to G}(1-\mathbf{1}_{\Omega}(\rho))p^{-\vartheta_{\underline{x}}(\rho)s})\\
	=&f(\underline{x},s)(s-x^{-1})^{-\beta}
	\sum_{j=0}^{\gamma-\delta(\rho)}g_{j}(\underline{x},s)\big(\log\frac{1}{s-y^{-1}}\big)^j.
	\end{aligned}\]
	Since $\gamma>\gamma_0$, there exists at least one surjective homomorphism $\chi$ such that \(\delta(\chi)=\delta\) and \(\gamma(\chi)=\gamma\) by Lemma~\ref{lemma:admissible partition}.
	According to Definition~\ref{def:delta0}, we know that \(\delta\leq\gamma_0\), hence \(\gamma-\delta>0\).
	By summing over all admissible partitions, we then get the desired analytic continuation of \(\psi_\gamma(\underline{x},s)\).
\end{proof}
Then let's consider the ``lower bound'' of field counting.
\begin{lemma}\label{lemma:tau 2}
	Let \(\gamma>\gamma_0\) be a fixed integer.
	There exists a generating series \(\tau_{\gamma}(\underline{x},s)=\sum_{d}a_dd^{-s}\) such that \(\tau_{\gamma}(\underline{x},s)\leq\pi_{\gamma}(\underline{x},s)\) and such that it defines a regular function \(U_0\to\mathbb{C}\) with parameter.
	Moreover, there exists regular functions $f,g_0,\dots,g_{\gamma-\delta}:\bar{U}_0\to\mathbb{C}$ with parameter such that $g_{\gamma-\delta}$ can be chosen to be a positive constant and $f_{\underline{x}}(x^{-1})\neq0$, and such that
	\[\tau_{\gamma,\underline{x}}(s)=
	f_{\underline{x}}(s)(s-x^{-1})^{-\beta}
	\sum_{j=0}^{\gamma-\delta}g_{j,\underline{x}}(s)\big(\log\frac{1}{s-y^{-1}}\big)^j.\]
    \end{lemma}
\begin{proof}
	We give a construction of such a series as follows.
	Let $\chi:\prod_{p\nmid\infty}\mathbb{Z}_p^*\to G$ be a surjective homomorphism such that $\delta(\chi)=\delta$, and such that $\gamma(\chi)=\gamma$.
	The existence of such \(\chi\) is given by Lemma~\ref{lemma:admissible partition}.
	Let $\gamma=n_{1}+\cdots+n_{\omega}$ be the partition corresponding to $\chi$, i.e., \((n_1,\dots,n_k)=\underline{n}=\underline{n}(\chi)\).
	Let 
	\begin{equation*}
		T:=\{p\mid\lvert G\rvert\}
		\cup
		\{p\in\mathcal{P}(\lvert G\rvert)\mid\operatorname{im}(\rho\vert_p)\cap\Omega=\emptyset\}.
	\end{equation*}
	Define
	\begin{equation*}
		\tau_{\gamma}(\underline{x},s):=
		\sum_{\rho:\prod_{p\nmid\infty}\mathbb{Z}_p^*\to G}
		\sum_{\substack{
				\underline{n}(\rho)=\underline{n}\\
				\rho\vert_p=\chi\vert_p\,\forall p\in T
			}}
		(\Theta_{\underline{x}}(\rho))^{-s}.
	\end{equation*}
	If \(\rho\) satisfies the conditions that \(\underline{n}(\rho)=\underline{n}\) and that \(\rho\vert_p=\chi\vert_p\) for all \(p\in T\), then we see that \(\rho\) must be surjective with \(\gamma(\rho)=\gamma\).
	So \(\tau_{\gamma}(\underline{x},s)\leq\pi_{\gamma}(\underline{x},s)\) as required.
	As in the proof of Lemma~\ref{lemma:analytic continuation of mu 2}, there exists some regular function \(W(\underline{x},s):U\to\mathbb{C}\) with parameter \(\underline{x}\) such that
	\begin{equation*}
		\tau_{\gamma}(\underline{x},s)=
		W(\underline{x},s)
		\sum_{\substack{
				\underline{p}\in(\mathcal{P}(\lvert G\rvert)\backslash T)^\gamma\\
				p_1<\cdots<p_{\gamma}
			}}
		\sum_{\substack{
			    \rho:\prod_{i=1}^{\gamma}\mathbb{Z}_{p_i}^*\to G\\
			    \underline{n}(\rho)=\underline{n}
			}}(\Theta_{\underline{x}}(\rho))^{-s}
		\prod_{\substack{
			    p\in\mathcal{P}(\lvert G\rvert)\backslash T\\
			    p\nmid p_1\cdots p_{\gamma}
			}}
		(\sum_{\rho:\mathbb{Z}_p^*\to G}(1-\mathbf{1}_\Omega(\rho))p^{-\vartheta_{\underline{x}}s})
	\end{equation*}
	The proof of analytic properties is similar to that of $\psi_{\gamma}(\underline{x},s)$.
	So we just leave it to the reader.
\end{proof}
\subsection{Proof of Theorem~\ref{thm:analytic continuation of pi(s)}}
Let's give the proof when \(\gamma>\gamma_0\).
Other cases are similar.
\begin{proposition}\label{prop:analytic continuation of pi 2}
	Let $\gamma>\gamma_0$ be a fixed positive integer.
	The expression $\pi_{\gamma}(\underline{x},s):U_0\to\mathbb{C}$ defines a regular function with parameter \(\underline{x}\).
	\begin{enumerate}
		\item If $y>x$, then there exists a regular function $f:\bar{U}_0\to\mathbb{C}$ with parameter such that $f(\underline{x},x_1^{-1})\neq0$ and such that
		\[\pi_{\gamma,\underline{x}}(s)=f_{\underline{x}}(s)(s-x^{-1})^{-\beta}.\]
		\item Else if $y\leq x$, then there exists regular functions $f,f_0,\dots,f_{\gamma-\delta}:\bar{U}_0\to\mathbb{C}$ with parameter such that $f_{\underline{x}}(x_1^{-1})f_{\gamma-\delta,\underline{x}}(x_1^{-1})\neq0$ and such that
		\[\pi_{\gamma}(\underline{x},s)=f_{\underline{x}}(s)(s-x^{-1})^{-\beta}\sum_{j=0}^{\gamma-\delta}f_{j,\underline{x}}(s)\big(\log\frac{1}{s-y^{-1}}\big)^j.\]
	\end{enumerate}
\end{proposition}
\begin{proof}
	We start with the case when $G=C_q$, a cyclic group of prime order.
	Since $\bar{G}=\{\{0\},\Omega_1=G\backslash\{0\}\}$, the only choice of a nonempty $\Omega$ is $\Omega=\Omega_1$.
	In other words, for each $K\in\mathcal{S}(G)$, we have $\mathbf{1}_{(\Omega,\gamma)}(K)=1$ if and only if $K$ admits exactly $\gamma$ tamely ramified primes.
	Since
	\[\pi_{\gamma}(\underline{x},s)=\mu_{\gamma}(\underline{x},s)-1,\]
	we see that our conclusion holds by Lemma~\ref{lemma:analytic continuation of mu 2}.
	
	Now let $G$ be a finite abelian group.
	Assume that the conclusion holds for each proper subgroup $H\subseteq G$.
	Let $\Theta_{\underline{x}}:\operatorname{Hom}(\prod_{p\nmid\infty}\mathbb{Z}_p^*,H)$ be the induced invariant, i.e., for each local homomorphism $\rho:\mathbb{Z}_p^*\to H$, let \(\vartheta_{\underline{x}}(\rho):=\vartheta_{\underline{x}}(\imath_H\circ\rho)\), where $\imath_H:H\to G$ is the group embedding.
	And define \(\mathbf{1}_\Omega(\rho):=\mathbf{1}_{\Omega}(\imath_H\circ\rho)\) for all local homomorphisms \(\rho:\mathbb{Z}_p^*\to H\).
	Then we see that
	\[\pi_{\gamma}(\underline{x},s)=\mu_{\gamma}(\underline{x},s)-\sum_{H\subsetneq G}\pi_{H,\gamma}(\underline{x},s).\]
	This shows the existence of analytic continuation of $\pi_{\gamma}(\underline{x},s)$.
	To be precise, there exists some integers $\beta',\gamma'$ and some continuous functions $f,g_0,\dots,g_{\gamma'}:\bar{U}\to\mathbb{C}$ that are regular in the closed half-plane $\sigma\geq x_1^{-1}$ for each $\underline{x}\in V_I$ such that
	\[\pi_{\gamma}(\underline{x},s)=f(\underline{x},s)(s-x^{-1})^{-\beta'}\sum_{j=0}^{\gamma'}g_{j}(s)\big(\log\frac{1}{s-y^{-1}}\big)^j.\]
	It suffices to compute $\beta'$ and $\gamma'$ and to give the pole (limit) behaviour at $s=x_1$.
	
	According to Lemma~\ref{lemma:psi} and \ref{lemma:tau 2}, we see that there exists generating series \(\psi_{\gamma}(\underline{x},s)\) and \(\tau_{\gamma}(\underline{x},s)\) such that
	\begin{equation*}
		\tau_{\gamma}\leq\pi_{\gamma}(\underline{x},s)\leq\psi_{\gamma}(\underline{x},s).
	\end{equation*}
	For each \(\underline{x}\in V\), \(\psi_\gamma(\underline{x},s)\) and \(\tau_\gamma(\underline{x},s)\) admits the following analytic continuation.
	There exists some regular functions $g,g_0,\dots,g_{\gamma-\delta}$ in the closed half-plane $\sigma\geq x_1^{-1}$ such that $g_{\gamma-\delta}$ can be chosen to be a positive constant and $g(\underline{x},x^{-1})\neq0$, and such that
	\[\tau_{\gamma}(\underline{x},s)=g(\underline{x},s)(s-x^{-1})^{-\beta}\sum_{j=0}^{\gamma-\delta}g_{j}(\underline{x},s)(s-y^{-1})^{j}.\]
	On the other hand, there exists regular functions $h,h_0,\dots,h_{\gamma-\delta}$ in the closed half-plane $\sigma\geq x_1^{-1}$ such that $h_{\gamma-\delta}$ can be chosen to be a positive constant and $h(\underline{x},x^{-1})\neq0$ , and such that
	\[\psi_{\gamma}(s)=h(\underline{x},s)(s-x^{-1})^{\beta}\sum_{j=0}^{\gamma-\delta}\big(\log\frac{1}{s-y^{-1}}\big)^j.\]
	Then Theorem~\ref{thm:delange} implies that the required analytic continuation.
	For example, let's say $y>x=x_1$.
	If $\beta'>\beta$, then Theorem~\ref{thm:delange} would imply that
	\[\tilde{N}_{\underline{x},\gamma}(t)=o(N_{\underline{x},\gamma}(t)),\]
	as \(t\to\infty\).
	The contradiction implies $\beta'\geq\beta$.
	If $\beta'<\beta$, then by writing $\pi_{\gamma}(\underline{x},s)=\sum_{d}a_{d}d^{-s}$ and $\psi_{\gamma}(\underline{x},s)=\sum_{d}c_{d}d^{-s}$, we have
	\[\sum_{d<t}c_{d}=o(\sum_{d<t}a_d)\]
	as $t\to\infty$.
	This contradicts Lemma~\ref{lemma:psi}, i.e., $a_d\leq c_d$ for all $d\in S(\underline{x})$.
	So $\beta'\leq\beta$.
	And this proves that when $y>x=x_1$, we have $\beta'=\beta$, hence the analytic continuation of $\pi_{\gamma}(\underline{x},s)$ in this case.
	The proof of other cases are similar.
	And we are done.
\end{proof}
\subsection{Proof of Theorem~\ref{thm:counting abelian fields}}
First assume that \(y=x=x_1\).
Let \(\gamma>\gamma_0\) be an integer.
Let
\begin{equation*}
	\pi_{\gamma,\underline{x}}(s)=f_{\gamma,\underline{x}}(s)(s-x_1)^{-s}\sum_{j=0}^{\gamma-\delta}g_{j,\underline{x}}(s)\Bigl(\log\frac{1}{s-x_1}\Bigr)
\end{equation*}
be the analytic continuation of \(\pi_{\gamma,\underline{x}}(s)\) as in Proposition~\ref{prop:analytic continuation of pi 2}.
The statement of Theorem~\ref{thm:delange} says that
if we write \(\pi_{\gamma,\underline{x}}(s)=\sum_{d\in S(\underline{x})}a_dd^{-s}\), then
\begin{equation*}
	\sum_{\substack{
			d\in S(\underline{x})\\
			d<t
		}}a_d\sim\left\{\begin{aligned}
		&\frac{h_{\underline{x}}(x_1^{-1})}{\Gamma(\beta)}t^{x_1^{-1}}(\log t)^{\beta-1}(\log\log t)^{\gamma-\delta}\quad&\text{if }\beta>0\\
		&(\gamma-\delta)h_{\underline{x}}(x_1^{-1})\frac{t^{x_1^{-1}}}{\log t}(\log\log t)^{\gamma-\delta-1}\quad&\text{if }\beta=0
	\end{aligned}\right.
\end{equation*}
where \(h_{\underline{x}}(x_1^{-1})=f(\underline{x},x_1^{-1})g_{\gamma}(\underline{x},x_1^{-1})\) is a continuous function on \(V\).
This shows the counting result of abelian fields when \(y=x\) and \(\gamma>\gamma_0\).
Other cases are similar and are left to the reader.
\subsection{Distribution of class groups}
Finally, we give a statement to describe the general relation between the ordering of number fields and the distribution of class groups in abelian case.
\begin{theorem}\label{thm:ordering and conjecture}
	Keep the notations \(G,(\mathcal{S},\Theta_{\underline{x}}),\Omega\).
	For a fixed choice \(G/\sim=\bigsqcup_{i=0}^l S_i\) with the open subset \(V=\{\underline{x}\in\mathbb{R}_+^l\mid x_1<\cdots<x_l\}\), the Conjecture~\ref{conj:main body}(\ref{conj:r rami prime less than count fields}) holds if and only if \(y=x_1\) is the smallest among the parameters.
\end{theorem}
\begin{proof}
	First assume that \(y=x=x_1\).
	When \(\gamma>\gamma_0\), we see that
	\begin{equation*}
		R^{\gamma}_{\gamma+1}(\underline{x})=0.		
	\end{equation*}
	In other words, the statement of the Conjecture~\ref{conj:main body}(\ref{conj:r rami prime less than r' rami prime}) holds for all \(\gamma>\gamma_0\).
	If \(\gamma\leq\gamma_{\underline{x}}\), then by Lemma~\ref{lemma:analytic continuation of mu 2}, Theorem~\ref{thm:analytic continuation of pi(s)} and \ref{thm:delange}, we see that
	\begin{equation*}
		\lim_{t\to\infty}\frac{\tilde{N}_{\underline{x},\gamma}(t)}{N_{\underline{x}}(t)}=\lim_{t\to\infty}\frac{c_1(\underline{x})t^{x^{-1}}(\log t)^{\beta_1}(\log\log t)^{\gamma}}{c_2(\underline{x})t^{x^{-1}}(\log t)^{\beta_2}},
	\end{equation*}
	where \(c_1,c_2\) are continuous functions in \(V\), and
	\begin{equation*}
		\begin{aligned}
		    \beta_1=&\beta(\mathfrak{M}),\\
	    	\beta_2=&\beta(\nu^{-1}(x_1)).
		\end{aligned}		
	\end{equation*}
	Since there exists \(h\in\Omega\) such that \(\nu(h)=x_1\), it is clear that \(\beta_2>\beta_1\).
	So
	\begin{equation*}
		\lim_{t\to\infty}\frac{c_1(\underline{x})t^{x^{-1}}(\log t)^{\beta_1}(\log\log t)^{\gamma}}{c_2(\underline{x})t^{x^{-1}}(\log t)^{\beta_2}}=0.
	\end{equation*}
	Therefore, we've shown that the Conjecture~\ref{conj:main body}(\ref{conj:r rami prime less than count fields}) holds for all \(\gamma\geq0\).
	The case when \(x_1=y<x\) is similar to the above one.
	
	On the other hand, if \(y>x=x_1\), then Theorem~\ref{thm:analytic continuation of pi(s)} and \ref{thm:delange} implies that when \(\gamma>\gamma_{\underline{x}}\), we have
	\begin{equation*}
		\lim_{t\to\infty}\frac{N_{\underline{x},\gamma}(t)}{N_{\underline{x}}(t)}=\lim_{x\to\infty}\frac{c_3(\underline{x})t^{x^{-1}}(\log t)^{\beta_2}}{c_2(\underline{x})t^{x^{-1}}(\log t)^{\beta_2}},
	\end{equation*}
	which is a non-zero constant.
	In other words, Conjecture~\ref{conj:main body}(\ref{conj:r rami prime less than count fields}) fails in this case.
	And we are done for the proof.
\end{proof}

\section{\texorpdfstring{$S_3$}{S3}-extensions}\label{section:s3}

In this section, we study the case when $G=S_3$.
Let $\mathcal{S}:=\mathcal{S}(S_3)$, which means that it is the set of non-Galois cubic fields.
In this case, $G_1=\langle(23)\rangle$ is the isotropy subgroup of \(1\), which is a cyclic subgroup of order $2$ in $S_3$.
Let $K_6$ be a Galois $S_3$-field, and let $K_3:=K_6^{G_1}$, and let $K_2$ be the unique quadratic subfield in $K_6$.
Let $\Omega=\{(123),(132)\}\subset S_3$.
For example, $\mathbf{1}_{(\Omega,0)}(K_3)=1$ if $K_3$ has no totally ramified primes.

We first define the ordering of fields in this case.
Recall that Definition~\ref{def:invariant general} gives the idea of setting up an invariant for general number fields.
For each $K\in\mathcal{S}$ and for each pair $(x,y)\in\mathbb{R}^2_+$, we can write
\[\Theta_{(x,y)}(K_3):=d_1^{x}d_2^{y}\]
with $d_1=\operatorname{Disc}(K_2)$ and $d_2$ is the product of totally ramified primes, i.e., $(\operatorname{Nm}\Delta K_6/K_2)^{1/4}$.
Since in this section, we care about the ratio $x:y$ instead of the point $(x,y)$ in \(\mathbb{R}^2_+\), so let's give the following notation of invariant of cubic fields.
\begin{definition}
	For each $x\in\mathbb{R}_+$, and for each Galois \(S_3\)-field \(K_6\), let
	\[\Theta_x(K_6):=\Theta_x(K_3):=d_1d_2^{x}.\]
\end{definition} 
For some examples, $\Theta_2$ is the usual discriminant of cubic fields, and $\Theta_1$ is the product of ramified primes, up to $2$ and $3$.
As our convention, let \(\mathcal{S}(x):=(\mathcal{S},\Theta_x)\) be the set \(\mathcal{S}\) of fields ordered by \(\Theta_x\), and let
\begin{equation*}
	\begin{aligned}
		N_x(t):=&N(\mathcal{S}(x);t)\\
		N_{x,\gamma}(t):=&N(\mathcal{S}(x);(\Omega,\gamma);t)
	\end{aligned}	
\end{equation*}
where \(\gamma\) is a non-negative integer.
Recall that in Definition~\ref{def:function R} we've defined the following expression: for each pair $(\gamma_1,\gamma_2)\in\mathbb{N}^2$, we have
\[R^{\gamma_1}_{\gamma_2}(x):=\lim_{t\to\infty}\frac{N_{x,\gamma_1}(t)}{N_{x,\gamma_2}(t)}.\]
For each \(\gamma\geq0\), we can define the expression
\begin{equation*}
	R^{\gamma}(x):=\lim_{t\to\infty}\frac{N_{x,\gamma}(t)}{N_x(t)}.
\end{equation*}
For example, the result of Davenport and Heilbronn~\cite{Davenport1971Cubic} tells us that the expression \(R^0(2)\) is well-defined with a positive value.
In this section, instead of studying the property of $R$ with a fixed $x$ (or fixed ordering of number fields), we will let $x$ be a variable.
The main result is the following.
\begin{theorem}\label{thm:S3 continuity}
	The function \(R^{\gamma}(x)=0\) for all \(0<x<1\).
	If for each \(\gamma=0,1,2\dots\), the expression \(R^{\gamma}(x)\) is well-defined for all \(x>0\), and \(t(\log\log t)^{\gamma}=o(N_{1}(t))\), then \(R^{\gamma}(x)\) is a continuous function defined on \(x>0\) such that \(R^\gamma(x)=0\) when \(0<x\leq1\) and \(R^{\gamma}(x)neq0\) for all \(x\neq0\).
\end{theorem}
We prove the theorem in several steps.
\subsection{Algebraic theory}
\begin{lemma}\label{lemma:cohomology of C2}
	For each positive integer \(n\), let \(C_n\) denote the cyclic group of order \(n\).
	If \(m\geq3\), and \(C_m\) is a \(C_2\)-module with the nontrivial action \(g_2\cdot g_m=g_m^{-1}\) where \(g_m\), resp. \(g_2\), is a generator of \(C_m\), resp. \(C_2\), then \(H^2(C_2,C_m)=0\).
	In other words, there is only one group \(G=D_m\), the dihedral group, satisfying the short exact sequence
	\begin{equation*}
		1\to C_m\to D_m\to C_2\to1,
	\end{equation*}
	up to isomorphisms between sequences.
\end{lemma}
\begin{proof}
	Since \(C_2\) is a cyclic group, the computation of \(H^2(C_2,C_m)\) reduces to the Tate cohomology \(\hat{H}^0(C_2,C_m)=C_m^{C_2}/N_{C_2}C_m\).
	An element \(g\in C_m\) is fixed by \(g_2\) if and only if \(g=1\), so \(C_m^{C_2}=1\).
	And we get \(H^2(C_2,C_m)=\hat{H}^0(C_2,C_m)=0\).
	Since the zero element in \(H^2(C_2,C_m)=\operatorname{Ext}(C_2,C_m)\) is exactly \(G=C_m\rtimes C_2\), we obtain the short exact sequence
	\begin{equation*}
		1\to C_m\to G\to C_2\to1.
	\end{equation*}
	Then we see that \(G=\langle g_m,g_2\mid g_m^m=g_2^2=1,g_2g_mg_2^{-1}=g_m^{-1}\rangle\), which is exactly the dihedral group \(D_m\).
\end{proof}
\begin{proposition}\label{prop:C2-morphism and Galois S3 field}
	If \(K_2\) is a quadratic number field, \(C_3\) is the non-trivial \(C_2\)-module, then a cubic extension \(K_6/K_2\) give rise to a Galois \(S_3\)-field extension \(K_6/\mathbb{Q}\) if and only if the Artin reciprocity map \(\rho:\operatorname{C}_{K_2}\to C_3\) is a \(C_2\)-morphism.
\end{proposition}
\begin{proof}
	According to Lemma~\ref{lemma:cohomology of C2}, the field extension \(K_6/\mathbb{Q}\) is Galois if and only if the Artin reciprocity map \(\rho:\operatorname{C}_{K_2}\to C_3\) is a \(C_2\)-morphism.
	The Galois group \(G:=\operatorname{Gal}(K_6/\mathbb{Q})\) satisfies the short exact sequence
	\begin{equation*}
		1\to C_3\to G\to C_2\to1.
	\end{equation*}
	If \(C_3\) is a trivial \(C_2\)-module, then clearly \(G\) is isomorphic to \(C_6\).
	Else if \(C_3\) is nontrivial, then according to Lemma, we know that \(G\cong C_3\rtimes C_2\), which is just \(S_3\).
\end{proof}
\subsection{Invariant of cubic fields}
Because of the relation between Artin reciprocity map and cubic field extensions \(K_6/K_2\), we give the following definition.
\begin{definition}\label{def:invariant of C2-morphism}
	Let \(x>0\) be a positive real number, and let \(K_2\) be a quadratic number field.
	If \(\rho:\mathscr{O}_p^*\to C_3\) is a local homomorphism, where \(p\) is a finite rational prime and \(\mathscr{O}_p^*=\prod_{\mathfrak{p}\mid p}\mathscr{O}_{\mathfrak{p}}^*\), then define
	\begin{equation*}
		\vartheta_x(\rho):=\left\{\begin{aligned}
			p^x\quad&\text{if }\rho(\mathscr{O}_p^*)=C_3\\
			1\quad&\text{ otherwise.}
		\end{aligned}\right.
	\end{equation*}
	If \(\rho:J_{K_2}^{S_\infty}\to C_3\) is a homomorphism, then let
	\begin{equation*}
		\Theta_x(\rho):=\prod_{p\nmid\infty}\vartheta_x(\rho\vert_p).
	\end{equation*}
	In particular, if \(\chi:\operatorname{C}_{K_2}\to C_3\) is a lift of \(\rho\), then define \(\Theta_x(\chi):=\Theta_x(\rho)\).
\end{definition}
Given a Galois \(K_6\)-extension such that \(K_6/K_2\) corresponds to an Artin reciprocity map \(\chi:\operatorname{C}_{K_2}\to C_3\), we can see that
\begin{equation*}
	\Theta_x(K_6)=\operatorname{Disc}(K_2)\cdot\Theta_x(\chi).
\end{equation*}
\begin{definition}\label{def:indicator of C2-morphism}
	Let \(K_2\) be a quadratic number field.
	If \(\rho:\mathscr{O}_p^*\to C_3\) is  a local homomorphism, where \(p\) is a finite rational prime, then define
	\begin{equation*}
		\mathbf{1}_\Omega(\rho):=\left\{\begin{aligned}
			1\quad&\text{if }\rho(\mathscr{O}_p^*)=C_3\\
			0\quad&\text{otherwise.}
		\end{aligned}\right.
	\end{equation*}
	For each \(\gamma=0,1,2,\dots\), and for each \(\rho:\prod_{\mathfrak{p}\nmid\infty}\mathscr{O}_{\mathfrak{p}}^*\to C_3\), define
	\begin{equation*}
		\mathbf{1}_{(\Omega,\gamma)}(\rho)=\left\{\begin{aligned}
			1\quad&\text{if there are exactly }\gamma\text{ rational primes }\\
			&p\nmid\infty\text{ such that }\mathbf{1}_\Omega(\rho\vert_p)=1;\\
			0\quad&\text{otherwise.}
		\end{aligned}\right.
	\end{equation*}
	In particular, if \(\chi:\operatorname{C}_{K_2}\to C_3\) is a lift of \(\rho\), then define \(\mathbf{1}_{(\Omega,\gamma)}(\chi):=\mathbf{1}_{(\Omega,\gamma)}(\rho)\).
\end{definition}
Again, one can check that \(\mathbf{1}_{(\Omega,\gamma)}(K_3)=1\) if and only if \(\mathbf{1}_{(\Omega,\gamma)}(\chi)=1\) where \(\chi:\operatorname{C}_{K_2}\to G\) is the Artin reciprocity map corresponding to the abelian cubic extension \(K_6/K_2\).
\subsection{Estimate of field-counting}
For any \(x>0\), let 
\begin{equation*}
	S(x)=\{d>0\mid d=d_1d_2^x\text{ where }(d_1,d_2)\in\mathbb{Z}_+^2\}
\end{equation*}
be the index set.
Let \(y:=\min\{1,x\}\), and let \(U:=\{(x,s)\in\mathbb{R}\times\mathbb{C}\mid x>0,\sigma>0\}\) and let \(U_0:=\{(x,s)\in U\mid\sigma>y^{-1}\}\)
For each \(\gamma=0,1,2,\dots\), define the generating series
\begin{equation*}
	\pi_{\gamma}(x,s):=\sum_{d\in S(x)}\sum_{\substack{
			K\in\mathcal{S}(x)\\
			\Theta_x(K)=d
		}}\mathbf{1}_{(\Omega,\gamma)}(K).
\end{equation*}
\begin{lemma}\label{lemma:S3 lower bound}
	Let \(k:=\mathbb{Q}(\sqrt{-2})\).
	For each \(\gamma>0\), define
	\begin{equation*}
		\tau_{\gamma}(x,s):=(\operatorname{Disc}(k))^{-s}\sum_{\chi:\operatorname{C}_{k}\to C_3}\mathbf{1}_{(\Omega,\gamma)}(\chi)(\Theta_x(\chi))^{-s},
	\end{equation*}
	where \(C_3\) is the non-trivial \(C_2\)-module, and \(\chi\) runs over all \(C_2\)-morphisms.
	Then \(\tau_{\gamma}(x,s)\leq\pi_{\gamma}(x,s)\).
	The expression \(\tau_{\gamma}(x,s)\) defines a continuous function in \(U\) such that for each \(x>0\) the function \(\tau_{\gamma}(x,s)\) is regular in the open half-plane \(\Re(s)>y^{-1}\).
	In particular, when \(\gamma>0\), there exists some continuous functions \(g_0(x,s),\dots,g_{\gamma}(x,s)\) in \(\bar{U}\) such that for each \(x>0\) the functions \(g_0(x,s),\dots,g_{\gamma}(x,s)\) are regular in the closed half-plane \(\Re(s)\geq y\) and such that
	\begin{equation*}
		\tau_{\gamma}(x,s)=\sum_{j=0}^{\gamma}g_{j}(x,s)\Bigl(\log\frac{1}{s-x^{-1}}\Bigr)^{j}.
	\end{equation*}
\end{lemma}
Since \(\operatorname{Cl}_k=1\), there is no unramified extension of \(k\), just like \(\mathbb{Q}\).
So when \(\gamma=0\), the series reduces to \(1\).
\begin{proof}
	It is clear from the expression itself that if we write \(\tau_{\gamma}(x,s)=\sum_{d\in S(x)}a_dd^{-s}\) and \(\pi_{\gamma}(x,s)=\sum_{d\in S(x)}b_dd^{-s}\), then
	\begin{equation*}
		\begin{aligned}
			\sum_{d<t}a_d=&1+\sum_{\substack{
					K_3\in\mathcal{S}\\
					K_2=k
				}}\mathbf{1}_{(\Omega,\gamma)}(K_3)\\
			\leq&1+\sum_{K_3\in\mathcal{S}}\mathbf{1}_{(\Omega,\gamma)}(K_3).
		\end{aligned}
	\end{equation*}
	If \(\rho:\prod_{\mathfrak{p}\nmid\infty}\mathscr{O}_{\mathfrak{p}}^*\to C_3\) is any homomorphism, then it is clear that \(\rho(-1)=1\), because
	\begin{equation*}
		1=\rho(1)=\rho(-1)^2.
	\end{equation*}
	Therefore, there exists a unique lift \(\chi:\operatorname{C}_k\to C_3\) of \(\rho\).
	To be precise, we first lift \(\rho\) to the morphism \(\prod_{\mathfrak{p}}\mathscr{O}_{\mathfrak{p}}^*\to C_3\) by adding the trivial map at infinity.
	Then using the fact that \(\rho\) factors through \(k^{S_\infty}\), we obtain a map \(\tilde{\rho}:J_k^{S_\infty}/k^{S_\infty}\to C_3\).
	Since \(\operatorname{Cl}_k=1\), we see that \(J_k^{S_\infty}/k^{S_\infty}\cong\operatorname{C}_k\), hence a homomorphism \(\chi:\operatorname{C}_k\to C_3\).
	Moreover, if \(\rho\) is a \(C_2\)-morphism, then clearly \(\tilde{\rho}\) also respects the \(C_2\)-action.
	The isomorphism \(J_k^{S_\infty}/k^{S_\infty}\cong\operatorname{C}_k\) also respects \(C_2\)-action.
	Therefore if \(\rho\) is a \(C_2\)-morphism, then its lift \(\chi\) is also a \(C_2\)-morphism.
	So, instead of considering the analytic properties of \(\tau_{\gamma}\), let's consider the following generating series
	\begin{equation*}
		\tau'_{\gamma}(x,s):=\sum_{\rho}\mathbf{1}_{(\Omega,\gamma)}(\rho)(\Theta_x(\rho))^{-s},
	\end{equation*}
	where \(C_3\) is the non-trivial \(C_2\)-module and \(\rho\) runs over all \(C_2\)-morphisms \(\prod_{\mathfrak{p}\nmid\infty}\mathscr{O}_{\mathfrak{p}}^*\to C_3\).
	When \(\gamma>0\), the function \(\tau'_{\gamma}(x,s)\) could be re-written in the realm of absolute convergence:
	\begin{equation*}
		\tau'_{\gamma}(x,s)=\sum_{\substack{
				\underline{p}\in\mathcal{P}\\
				3<p_1<\cdots<p_{\gamma}
			}}\prod_{i=1}^{\gamma}(\sum_{\rho:\mathscr{O}_{p_i}^*\to C_3}p_i^{-xs}),
	\end{equation*}
	where \(\rho\) runs over all surjective \(C_2\)-morphisms \(\mathscr{O}_{p_i}^*\to C_3\).
	By considering the case when \(x=1\), and comparing with Riemann zeta function, we know that \(\tau'_{\gamma}(x,s)\) as a series is absolutely convergent in \(U\), hence continuous in \(U\) and \(\tau'_{\gamma}(x,s)\) is a regular function in the open half-plane \(\Re(s)>x^{-1}\) for each fixed \(x>0\).
	For each rational prime \(p>3\), the number of surjective \(C_2\)-morphisms is totally determined by its class modulo \(48\).
	To be precise, if \(p\equiv5,7,13,15\bmod{16}\), then \(p\) is split in \(k\), and \(p\) is inert otherwise.
	On the other hand, let \(\mathfrak{P}\mid p\) be a prime of \(K_6\) lying above \(p\), and let \(G(\mathfrak{P})\subseteq S_3\) be the decomposition group of \(\mathfrak{P}\).
	If \(y\) is the generator of the inertia subgroup, and \(x\) is the Frobenius element, up to conjugacy class, then we have the following identity
	\begin{equation*}
		xyx^{-1}=y^p.
	\end{equation*}
	In particular, this implies that if \(\langle y\rangle=C_3\), then either \(p\equiv-1\bmod{3}\) and \(p\) is inert in \(k\), or \(p\equiv1\bmod{3}\) and \(p\) is split in \(k\).
	Since \(3\) is coprime to \(16\), we see that Chinese Remainder Theorem guarantees solutions in \(\mathbb{Z}/48\).
	So by Proposition~\ref{prop:analytic continuation of zeta(q,a)}, we know that there exists some continuous functions \(g_0(x,s),\dots,g_{\gamma}(x,s)\) in \(\bar{U}\) such that for each \(x>0\) the functions \(g_0(x,s),\dots,g_{\gamma}(x,s)\) are regular in the closed half-plane \(\Re(s)\geq y\) and such that
	\begin{equation*}
		\tau_{\gamma}(x,s)=\sum_{j=0}^{\gamma}g_j(x,s)\Bigl(\log\frac{1}{s-x^{-1}}\Bigr)^{j}.
	\end{equation*}
\end{proof}
\begin{lemma}\label{lemma:S3 upper bound}
	For each \(\gamma>0\), let
	\begin{equation*}
		\mu_{\gamma}(x,s):=
		\sum_{K_2\in\mathcal{S}(C_2)}\lvert\operatorname{Hom}(\operatorname{Cl}_{K_2},C_3)\rvert(\operatorname{Disc}(K_2))^{-s}
		\sum_{\rho:J_{K_2}^{S_\infty}\to C_3}\mathbf{1}_{(\Omega,\gamma)}(\rho)(\Theta_x(\rho))^{-s},
	\end{equation*}
	where \(C_3\) is the non-trivial \(C_2\)-module and \(\rho\) runs over all \(C_2\)-morphisms.
	Then \(\mu_{\gamma}(x,s)\geq\pi_{\gamma}(x,s)\).
\end{lemma}
\begin{proof}
	According to Lemma~\ref{prop:CFT obstruction}, for a fixed quadratic number field \(K_2\) and a homomorphism \(\rho:J_{K_2}^{S_\infty}\to C_3\), there exists at most \(\lvert\operatorname{Hom}(\operatorname{Cl}_{K_2},C_3)\rvert\) lift \(\chi:\operatorname{C}_{K_2}\to C_3\) of \(\rho\).
	For an Artin reciprocity map \(\chi\in\operatorname{Hom}_{C_2}(\operatorname{C}_{K_2},C_3)\), we have that \(\mathbf{1}_{(\Omega,\gamma)}(\chi)=1\) if and only if \(\mathbf{1}_{(\Omega,\gamma)}(K_3)=1\).
	Therefore, if we write \(\mu_{\gamma}(x,s)=\sum_{d\in S(x)}a_dd^{-s}\) and \(\pi_{\gamma}(x,s)=\sum_{d\in S(x)}b_dd^{-s}\), then for a fixed \(d_0\in S(x)\), we have
	\begin{equation*}
		\begin{aligned}
			a_{d_0}&=\sum_{\substack{
					K_2=\mathbb{Q}(\sqrt{D})\\
					\operatorname{Disc}(K_2)<d_0
				}}\lvert\operatorname{Hom}(\operatorname{Cl}_{K_2},C_3)\rvert\sum_{\substack{
				\rho:J_{K_2}^{S_\infty}\to C_3\\
				\Theta_x(\rho)=d_0/\operatorname{Disc}(K_2)
			}}\mathbf{1}_{(\Omega,\gamma)}(\rho)\\
		\geq&\sum_{\substack{
				K_2=\mathbb{Q}(\sqrt{D})\\
				\operatorname{Disc}(K_2)<d_0
		}}\sum_{\substack{
				\chi:C_{K_2}\to C_3\\
				\Theta_x(\chi)=d_0/\operatorname{Disc}(K_2)
		}}\mathbf{1}_{(\Omega,\gamma)}(\chi)\\
	=&\sum_{\substack{
		K_3\in\mathcal{S}\\
		\Theta_x(K_3)=d_0
	}}\mathbf{1}_{(\Omega,\gamma)}(K_3)=b_{d_0}.
		\end{aligned}
	\end{equation*}
	And we are done.
\end{proof}
\begin{proposition}\label{prop:S3 estimate 0<x<1}
	If \(0<x<1\), and \(\gamma>0\) is an integer, then we have
	\begin{equation*}
		N_{x,\gamma}(t)\asymp\frac{t^{x^{-1}}}{\log t}(\log\log t)^{\gamma},
	\end{equation*}
	when \(t>1+e\).
\end{proposition}
\begin{proof}
	According to Lemma~\ref{lemma:S3 lower bound} and Theorem~\ref{thm:delange}, we see that
	\begin{equation*}
		N_{x,\gamma}(t)\gg\frac{t^{x^{-1}}}{\log t}(\log\log t)^{\gamma}.
	\end{equation*}
	Let \(K_2\) be any quadratic number field.
	Then
	\begin{equation*}\label{eqn:S3 estimate 1}
		\sum_{\rho:J_{K_2}^{S_\infty}\to C_3}\mathbf{1}_{(\Omega,\gamma)}(\rho)(\Theta_x(\rho))^{-s}\leq
		2^{\gamma+1}\sum_{\substack{
				\underline{p}\in\mathcal{P}^{\gamma}\\
				3<p_1<\cdots<p_{\gamma}
		}}(p_1\cdots p_{\gamma})^{-xs}.
	\end{equation*}
	Because \(\operatorname{Hom}_{C_2}(\mathscr{O}_p^*,C_3)\) has at most three elements when \(p\nmid6\infty\).
	On the other hand, \(\operatorname{Hom}_{C_2}(K_{2,\infty}^*,C_3)\) contains at most \(2\) elements, i.e., it is non-trivial when \(K_2/\mathbb{Q}\) is real and \(K_{2,\infty}=\mathbb{R}_{v_1}\times\mathbb{R}_{v_2}\) with \(v_1,v_2\) the two valuations above \(\infty\) and the Galois action of \(C_2\).
	By Proposition~\ref{prop:count (q,a) primes}, we see that there exists some continuous functions \(h_{0}(x,s),\dots,h_{\gamma}(x,s)\), where \(h_\gamma\) is a non-zero constant, on \(\bar{U}\), such that for each \(x>0\), \(h_0(x,s),\dots,h_{\gamma}(x,s)\) are regular in the closed half-plane \(\Re(s)\geq x^{-1}\) and such that
	\begin{equation*}
		\sum_{\substack{
				\underline{p}\in\mathcal{P}^{\gamma}\\
				3<p_1<\cdots<p_{\gamma}
		}}(p_1\cdots p_{\gamma})^{-xs}=\sum_{j=0}^{\gamma}h_j(x,s)\Bigl(\log\frac{1}{s-x^{-1}}\Bigr)^j.
	\end{equation*}
	Let's we write
	\begin{equation*}
		\sum_{d\in S(x)}c_dd^{-s}:=2^{\gamma+1}\sum_{\substack{
				\underline{p}\in\mathcal{P}^{\gamma}\\
				3<p_1<\cdots<p_{\gamma}
		}}(p_1\cdots p_{\gamma})^{-xs}.
	\end{equation*} 
	Then there exists some constant \(c_1>0\) such that for all quadratic number field \(K_2\) and for all \(t>0\) we have
	\begin{equation}
		\begin{aligned}
			\sum_{\substack{
					K_3\in\mathcal{S},\Theta_x(K_3)<t\\
					K_2\subseteq K_6
			}}\mathbf{1}_{(\Omega,\gamma)}(K_3)\leq&\lvert\operatorname{Hom}(\operatorname{Cl}_{K_2},C_3)\rvert\sum_{d<t}c_d\\
			\leq&\lvert\operatorname{Hom}(\operatorname{Cl}_{K_2},C_3)\rvert\cdot\frac{c_1t^{x^{-1}}}{\log t}(\log\log t)^{\gamma},
		\end{aligned}		
	\end{equation}
	where \(t>1+e\) the constant \(c_1\) does not depend on \(K_2\).
	Write \(\mu_{\gamma}(x,s):=\sum_{d\in S(x)}a_dd^{-s}\), then
	\begin{equation}\label{eqn:S3 estimate 2}
		\begin{aligned}
			N_{x,\gamma}(t)\leq&\sum_{d<t}a_d\\
			=&\sum_{\substack{
					K_2=\mathbb{Q}(\sqrt{D})\\
					\operatorname{Disc}(K_2)<t
				}}\lvert\operatorname{Hom}(\operatorname{Cl}_{K_2},C_3)\rvert
			\sum_{\substack{
					\rho:J_{K_2}^{S_\infty}\to C_3\\
					\Theta_x(\rho)<t/\operatorname{Disc}(K_2)
			}}\mathbf{1}_{(\Omega,\gamma)}(\rho)\\
		\leq&\sum_{\substack{
				d\in S(x)\\
				d<t
			}}c_d\sum_{\substack{
				K_2=\mathbb{Q}(\sqrt{D})\\
				\operatorname{Disc}(K_2)<t/d
		}}\lvert\operatorname{Hom}(\operatorname{Cl}_{K_2},C_3)\rvert
		\end{aligned}
	\end{equation}
	According to Davenport and Heilbronn~\cite[Theorem 3]{Davenport1971Cubic}, for all \(t>0\), there exists some constant \(c_2>0\) such that
	\begin{equation*}
		\sum_{\substack{
				K_2=\mathbb{Q}(\sqrt{D})\\
				\operatorname{Disc}(K_2)<t
		}}\lvert\operatorname{Hom}(\operatorname{Cl}_{K_2},C_3)\rvert=\sum_{\substack{
		K_2=\mathbb{Q}(\sqrt{D})\\
		\operatorname{Disc}(K_2)<t
	}}\lvert\operatorname{Cl}_{K_2}[3]\rvert\leq c_2t.
	\end{equation*}
	Therefore, (\ref{eqn:S3 estimate 2}) gives
	\begin{equation}\label{eqn:S3 estimate 3}
		\begin{aligned}
			\sum_{\substack{
					d\in S(x)\\
					d<t
			}}c_d\sum_{\substack{
					K_2=\mathbb{Q}(\sqrt{D})\\
					\operatorname{Disc}(K_2)<t/d
			}}\lvert\operatorname{Hom}(\operatorname{Cl}_{K_2},C_3)\rvert=&\sum_{\substack{
					d\in S(x)\\
					d<t
			}}c_d\sum_{\substack{
			K_2=\mathbb{Q}(\sqrt{D})\\
			\operatorname{Disc}(K_2)<t/d
		}}\lvert\operatorname{Cl}_{K_2}[3]\rvert\\
	&\leq c_2\sum_{\substack{
			d\in S(x)\\
			d<t
	}}c_d\frac{t}{d}
		\end{aligned}
	\end{equation}
	Let \(C(t):=\sum_{d<t}c_d\).
	Note that actually \(C(t)=\sum_{2^{x}\leq d<t}c_d\).
	Using Riemann-Stieltjes integral and integration by parts (see Montgomery and Vaughan~\cite[Appendix A]{montgomery2006multiplicative}), (\ref{eqn:S3 estimate 3}) shows that
	\begin{equation}\label{eqn:S3 estimate 4}
		\begin{aligned}
			\sum_{\substack{
					d\in S(x)\\
					d<t
			}}c_d\frac{t}{d}=&\int_{1^{+}}^t\frac{t}{y}\operatorname{d}C(y)\\
		    =&C(y)\frac{t}{y}\Big\vert_{1^+}^t-\int_{1^+}^tC(y)\operatorname{d}\frac{t}{y}\\
		    =&C(t)+t\int_{1^+}^tC(y)y^{-2}\operatorname{d}y.
		\end{aligned}		
	\end{equation}
	Using the idea that \(\int_a^b f(x)\operatorname{d}x\leq(b-a)\max_{[a,b]}f(x)\), we see that
	\begin{equation*}
		\begin{aligned}
			\int_{1^+}^tC(y)y^{-2}\operatorname{d}y\leq&\int_{1^+}^{1+e}C(y)y^{-2}\operatorname{d}y
			+c_1\int_{1+e}^{t}\frac{y^{x^{-1}-2}}{\log y}(\log\log y)^{\gamma}\operatorname{d}y&&\\
			\ll&\left\{\begin{aligned}
				&t^{x^{-1}-1}(\log t)^{-1}(\log\log t)^{\gamma}\quad&\text{if }x^{-1}-2>0\\
				&t(\log\log t)^{\gamma}\quad&\text{otherwise.}
			\end{aligned}
			\right.
		\end{aligned}
	\end{equation*}
	Either case, we see that 
	\begin{equation*}
		t\int_{1^+}^tC(y)y^{-2}\operatorname{d}y\ll\frac{t^{x^{-1}}}{\log t}(\log\log t)^{\gamma}.
	\end{equation*}
	Put the result of (\ref{eqn:S3 estimate 4}) back into (\ref{eqn:S3 estimate 2}), we have
	\begin{equation*}
		N_{x,\gamma}(t)\ll\frac{t^{x^{-1}}}{\log t}(\log\log t)^{\gamma}.
	\end{equation*}
	And we are done.
\end{proof}
Finally we need to understand what happens when $t>1$ with the condition that $\alpha$ is a function defined in some neighbourhood of a point.
\begin{lemma}\label{lemma:S3 estimate x>1}
	\begin{enumerate}
		\item When \(x>1\), and \(\gamma>0\), we have
		\begin{equation*}
			N_{x,\gamma}(t)\ll t
		\end{equation*}
		as \(t>1+e\).
		\item When \(x=1\), and \(\gamma>0\), we have
		\begin{equation*}
			N_{x,\gamma}(t)\ll t(\log\log t)^{\gamma+1}
		\end{equation*}
		as \(t>1+e\).
	\end{enumerate}	
\end{lemma}
\begin{proof}
	Lemma~\ref{lemma:S3 upper bound} and (\ref{eqn:S3 estimate 1}) are independent of the choice of \(x\).
	So the integral of (\ref{eqn:S3 estimate 4}) in this case says the following.
	\begin{equation*}
		\int_{1^+}^tC(y)y^{-2}\operatorname{d}y\leq\int_{1^+}^{1+e}C(y)y^{-2}\operatorname{d}y+c_1\int_{1+e}^t\frac{y^{x^{-1}-2}}{\log y}(\log\log y)\operatorname{d}y.
	\end{equation*}
	If \(x>1\), then \((\log t)^{\gamma}/t=O(1)\) in the region \(t\geq1\), and we see that the Laplace transform
	\begin{equation*}
		\int_1^\infty e^{-at}\frac{(\log t)^{\gamma}}{t}\operatorname{d}t
	\end{equation*}
	is well-defined for all \(a>0\).
	So we have
	\begin{equation*}
		\int_{1+e}^t\frac{y^{x^{-1}-2}}{\log y}(\log\log y)\operatorname{d}y\stackrel{y=e^t}{=}\int_{\log(1+e)}^{\infty}e^{-(1-x^{-1})t}\frac{(\log t)^{\gamma}}{t}\operatorname{d}t<\infty,
	\end{equation*}
	for all \(x>1\).
	By putting the results together back to (\ref{eqn:S3 estimate 2}) gives the estimate:
	\begin{equation*}
		N_{x,\gamma}(t)\ll t.
	\end{equation*}
	Else if \(x=1\), then the integral of (\ref{eqn:S3 estimate 4}) becomes
	\begin{equation*}
		\begin{aligned}
			\int_{1^+}^{t}C(y)y^{-2}\operatorname{d}y\leq&\int_{1^+}^{1+e}C(y)y^{-2}\operatorname{d}y+c_2\int_{1+e}^t\frac{(\log\log y)^{\gamma}}{y\log y}\operatorname{d}y\\
			\ll&(\log\log t)^{\gamma+1}.
		\end{aligned}
	\end{equation*}
	Therefore, putting the results back to (\ref{eqn:S3 estimate 2}) we have
	\begin{equation*}
		N_{x,\gamma}(t)\ll t(\log\log t)^{\gamma+1}.
	\end{equation*}
\end{proof}
\begin{proposition}\label{prop:S3 continuity}
	Let $x_0>1$ be a real number, and let $(\gamma_1,\gamma_2)$ be a pair of natural numbers.
	\begin{enumerate}
		\item If the functions $R^0_{\gamma_1}(x)$ and $R^{\gamma_1}_{\gamma_2}(x)$ are both well-defined in a neighbourhood of $x=x_0$, then $R^{\gamma_1}_{\gamma_2}(x_0)>0$, and $R^{\gamma_1}_{\gamma_2}(x)$ is continuous at $x_0$.
		\item If the function $R^0(x)$ is well-defined in a neighbourhood of \(x_0\), then \(R^0(x)\) is continuous in a neighbourhood of \(x=x_0\) such that \(R^0(x)\neq0\) for all \(x\) near \(x_0\).
	\end{enumerate}	
\end{proposition}
\begin{proof}
	By ``well-defined'', we mean that the image $R^{\gamma_1}_{\gamma_2}(x)$ is a real number.
	According to \cite[Theorem 8]{bhargava2013davenport}, we have
	\[N_{2,0}(t)\sim c_0t\]
	where $c_0>0$ is a constant.
	This shows that $R^0_{\gamma_1}(t)$, if well-defined in a neighbourhood of $t_0$, must be positive.
	Otherwise, this would imply that
	\[c_0t\sim N_{2,0}(t)=N_{x_0,0}(t)=o(N_{x_0,\gamma_1}(t)),\]
	which contradicts our estimates of counting fields in Lemma~\ref{lemma:S3 estimate x>1}.
	So, there exist some $c_1>0$ such that
	\[N_{x_0,\gamma_1}(t)\sim c_1t.\]
	By similar arguments, we know that there exists some constant $c_2$ such that
	\[N_{x_0,\gamma_2}(t)\sim c_2t.\]
	Of course, this implies that $R^{\gamma_1}_{\gamma_2}(x_0)=c_1/c_{2}>0$.
	Now, for a small $\delta>0$, and $j=1,2$, we compare $N_{x_0,\gamma_j}(t)$ and $N_{x_0+\delta,\gamma_j}(t)$.
	First note that for all $t>0$ we have
	\begin{equation*}
		\begin{aligned}
			\{d\in S(x)\mid d<t\}=&\{(d_1,d_2)\in\mathbb{N}^2\mid d_1<t,d_2^x>\frac{t}{d_1}\}\\
			\supseteq&\{(d_1,d_2)\in\mathbb{N}^2\mid d_1<t,d_2^{x+\delta}<\frac{t}{d_1}\}\\
			=&\{d\in S(x+\delta)\mid d<t\}.
		\end{aligned}
	\end{equation*}
	This already shows that
	\[N_{x_0,\gamma_j}(t)\geq N_{x_0+\delta,\gamma_j}(t).\]
	Define the following function
	\[D_{\delta,\gamma_j}(t):=\sum_{\substack{
			d_1d_2^x<t\\
			\omega(d_2)=\gamma_j
	}}1-\sum_{\substack{
			d_1d_2^{x+\delta}<t\\
			\omega(d_2)=\gamma_j
	}}1,\]
    where \(\omega(n)\) is the number of different prime factors of \(n\).
	For each $d=d_1d_2^x$, resp. $d=d_1d_2^{x+\delta}$, there are at most $2^{\gamma_j}$ many different cubic fields $K_3\in\mathcal{S}$ with $\Theta_x(K_3)=d$, resp. $\Theta_{x+\delta}(K_3)=d$.
	Because if there exists at least one \(K_3\) such that \(\Theta_x(K_3)=d\), then $d_1$ is the discriminant of the associated quadratic number field $K_2$, and $d_2$ becomes the product of totally ramified primes in \(K_3/\mathbb{Q}\) which corresponds to the primes that are ramified in \(K_6/K_2\).
	Since there are at most $2^{\gamma_j}$ different \(S_3\)-fields $K_6$ lying above $K_2$, we obtain the upper bound for the number of \(K_3\) such that \(\Theta_x(K_3)=d=d_1d_2^x\).
	So, 
	\[N_{x_0,\gamma_j}(t)-N_{x_0+\delta,\gamma_j}(t)\leq2^{\gamma_j}D_{\delta,\gamma_j}(x),\]
	or we could say 
	\[N_{x_0,\gamma_j}(t)-2^{\gamma_j}D_{\delta,\gamma_j}(x)\leq N_{x_0+\delta,\gamma_j}(t).\]
	According to Proposition~\ref{prop:count (q,a) primes} and Theorem~\ref{thm:delange}, we know that there exists some regular function $g_j(s)$ in the closed half-plane $\sigma\geq1$ such that
	\[\begin{aligned}
		N_{x_0+\delta,\gamma_j}(t)\geq&N_{x_0,\gamma_j}(t)-2^{\gamma_j}D_{\delta,\gamma_j}(t)\\
		\sim&\left(c_j+(g_j(x_0)-g_j(x_0+\delta))\right)t.
	\end{aligned}\]
	Finally, we can give the following estimate of $R^{\gamma_1}_{\gamma_2}(x_0+\delta)$.	
	Since
	\[\begin{aligned}
		\frac{N_{x_0,\gamma_1}(t)-2^{\gamma_1}D_{\delta,\gamma_1}(t)}{N_{x_0,\gamma_2}(t)}&\leq\frac{N_{x_0+\delta,\gamma_1}(t)}{N_{x_0+\delta,\gamma_2}(t)}\\
		&\leq\frac{N_{x_0,\gamma_1}(t)}{N_{x_0,\gamma_2}(t)-2^{\gamma_2}D_{\delta,\gamma_2}(x)},
	\end{aligned}\]
	as \(t\to\infty\), we have
	\[\frac{c_1+(g_1(x)-g_1(x_0+\delta))}{c_2}\leq R^{\gamma_1}_{\gamma_2}(x_0+\delta)\leq\frac{c_1}{c_{2}+(g_2(x)-g_{2}(x+\delta))}.\]
	Since the functions $g_1,g_{2}$ are both regular in the closed half-plane $\Re(s)\geq1$, we know that for each $\epsilon>0$, when $\delta$ is small enough, $\lvert R^{\gamma_1}_{\gamma_2}(x_0+\delta)-R^{\gamma_1}_{\gamma_2}(x_0)\rvert<\epsilon$ holds.
	Similar argument works for $R^{\gamma_1}_{\gamma_2}(x_0-\delta)$.
	And this shows that $R^{\gamma_1}_{\gamma_2}(x)$ is continuous at $x_0$, as required.
	
	Now assume that \(R^0(x)\) is well-defined in a neighbourhood of \(x_0\).
	Then by Lemma~\ref{lemma:S3 estimate x>1}, we know that  \(R^0(x)\) must be non-zero near \(x_0\), i.e., \(N_{x}(t)\sim c(x)t\) near \(x_0\), where \(c(x)\) is a positive number dependent on \(x\).
	Since for a fixed \(t>0\) and for all \(x,x'>0\) we have \(N_{x,0}(t)=N_{x',0}(t)\), the continuity of \(R^0(x)\) reduces to the continuity of the function \(\lim_{t\to\infty}t^{-1}N_{x}(t)\).
	Let \(\delta\) be a small positive number.
	Define
	\begin{equation*}
		D_{\delta}(t):=\sum_{j=1}^\infty 2^jD_{\delta,j}(t).
	\end{equation*}
	For each \(t>0\), the expression \(D_{\delta}(t)\) is actually a finite sum.
	Again
	\begin{equation*}
		N_{x_0}(t)-N_{x_0+\delta}(t)\leq D_{\delta}(t).
	\end{equation*}
	Moreover, we can compute \(D(t)\) using the generating series
	\begin{equation*}
		\mathfrak{D}(s):=\sum_{K_2=\mathbb{Q}(\sqrt{D})}\operatorname{Disc}(K_2)^{-s}(\prod_{p\nmid\infty}(1+2p^{-x_0s})-\prod_{p\nmid\infty}(1+2p^{-(x_0+\delta)s})).
	\end{equation*}
	In other words, there exists some regular function \(h(s)\) in the closed half-plane \(\Re(s)\geq1\) such that
	\begin{equation*}
		D_{\epsilon}(t)\sim h(x_0+\delta)t.
	\end{equation*}
	Then we see that
	\begin{equation*}
		N_{x_0}(t)-N_{x_0+\delta}(t)\leq D_{\delta}(t)\sim h(x_0+\delta)t
	\end{equation*}
	which implies that for all \(\epsilon>0\), take \(\delta\) to be a small enough number, we have that 
	\begin{equation*}
		\lim_{t\to\infty}t^{-1}(N_{x_0}(t)-N_{x_0+\delta}(t))=h(x_0+\delta)<\epsilon
	\end{equation*}
	Similar result holds for \(N_{x_0-\delta}(t)\).
	And we are done.
\end{proof}
\subsection{Proof of Theorem~\ref{thm:S3 continuity}}
When \(0<x<1\), using Proposition~\ref{prop:S3 estimate 0<x<1}, for all \(0<\gamma_1<\gamma_2\) we have
\begin{equation*}
	\begin{aligned}
		R^{\gamma_1}_{\gamma_2}(x)=&\lim_{t\to\infty}\frac{N_{x,\gamma_1}(t)}{N_{x,\gamma_2}(t)}\\
		\ll&\lim_{t\to\infty}\frac{t^{x^{-1}}(\log t)^{-1}(\log\log t)^{\gamma_1}}{t^{x^{-1}}(\log t)^{-1}(\log\log t)^{\gamma_2}}=0.
	\end{aligned}
\end{equation*}
Therefore \(R^{\gamma}(x)<R^{\gamma}_{\gamma+1}(x)=0\).
When \(\gamma=0\), we have \(N_{x,0}(t)\sim c_0x=o(N_{x,1}(t))\) with \(c_0\) a non-zero constant.
So \(R^{0}(x)\) is also \(0\).

If \(t(\log\log t)^{\gamma}=o(N_{1}(t))\), then Lemma~\ref{lemma:S3 estimate x>1} implies that
\begin{equation*}
	\begin{aligned}
		R^{\gamma}(1)=&\lim_{t\to\infty}\frac{N_{1,\gamma}(t)}{N_1(t)}\\
		\leq&\lim_{t\to\infty}\frac{c_{\gamma}t(\log\log t)^{\gamma}}{N_1(t)}=0.
	\end{aligned}	
\end{equation*}

If for a fixed \(\gamma\geq0\), the functions \(R^0(x)\) and \(R^\gamma(x)\) are well-defined for all \(x>0\), then first of all Proposition~\ref{prop:S3 continuity} shows that \(R^0(x)\) is continuous and \(R^0(x)\) is non-zero for all \(x>1\).
By the identity
\begin{equation*}
	R^\gamma(x)=R_0^\gamma(x)R^0(x),
\end{equation*}
we see that \(R_0^\gamma(x)\) is well-defined when \(x>1\).
So, Proposition~\ref{prop:S3 continuity} shows that \(R^\gamma_0(x)\) is also non-zero and continuous when \(x>1\), hence \(R^\gamma(x)\) is also continuous and non-zero for all \(\gamma>0\).

\begin{appendices}
	\section{Delange's Theorem}
	For the convenience of the reader, we cite some results of Delange~\cite{Delange54} here.
	Let \(A(t)\) be a real function defined for \(t\geq0\), non-decreasing and non-negative.
	Assume that the integral \(\int_0^\infty e^{-st}A(t)\operatorname{d}t\) is convergent in the half-plane \(\Re(s)>\alpha\), where \(\alpha\) is a positive real number, and we define its value by \(f(s)\).
	The way we state the result is slightly different from Delange's original one.
	
	When we say \(G\) is an analytic continuation of a holomorphic function \(F\) defined on some subsets \(V\) of \(\mathbb{C}\), it means that there exists a holomorphic function \(G(s)\) defined on a subset \(U\) containing \(V\) such that
	\begin{equation*}
		F(s)=G(s)
	\end{equation*}
	for all \(s\in V\).
	If \(U,V\) are connected, then we know that such an analytic continuation \(G\) is unique.
	In this case, we may as well just say that \(F\) is holomorphic on \(U\), and for \(s\in U\backslash V\), its value is given by \(G(s)\).
	
	For example, the Riemann zeta function defined by the series \(\zeta(s)=\sum n^{-s}\) is not well-defined when it comes to \(\Re(s)\leq1\), and in this case we should use its analytic continuation given by the functional equation.
	However, since the analytic continuation of \(\zeta(s)\) is unique, it is common that we still adopt the notation \(\zeta(s)\) and say that \(\zeta(s)\) is a meromorphic function over the whole complex plane.
	Now let's present the results.
	\begin{theorem}\label{Theorem III}\cite[Th\'{e}or\`{e}me III]{Delange54}
		Let \(\beta\) be a real number that is not an integer\(\leq0\).
		If there exists holomorphic functions \(g\) and \(h\) in a neighbourhood of \(\Re(s)\geq\alpha\), and \(g(\alpha)\neq0\), such that
		\begin{equation*}
			f(s)=(s-\alpha)^{-\beta}g(s)+h(s),
		\end{equation*}
		for all \(\Re(s)>\alpha\),
		then as \(t\to\infty\), we have that
		\begin{equation*}
			A(t)\sim\frac{g(\alpha)}{\Gamma(\beta)}e^{\alpha t}t^{\beta-1}.
		\end{equation*}
	\end{theorem}
	\begin{remark}
		\begin{enumerate}
			\item There is another case in the original statement, which could be thought of as treating functions with more subtle analytic properties at \(s=\alpha\).
			But we do not need it here.
			\item The condition essentially says that \(f\) admits an analytic continuation to a neighbourhood of \(\Re(s)\geq\alpha\) that satisfies a certain form.
			The function \(s^\beta\) is understood to be the main brunch if \(\beta\) is not an integer.
			Similarly for the function \(\log s\) showing up below.
		\end{enumerate}		
	\end{remark}
	\begin{theorem}\label{Theroem IV}\cite[Th\'{e}or\`{e}me IV]{Delange54}
		Let \(\beta\) be any real number, and let \(\gamma\) be an integer\(\geq1\).
		If there exists holomorphic functions \(g_j\) and \(h\) in a neighbourhood of \(\Re(s)\geq\alpha\), and \(g_q(\alpha)\neq0\), such that
		\begin{equation*}
			f(s)=(s-\alpha)^{-\beta}\sum_{j=0}^{\gamma}g_j(s)\Bigl(\log\frac{1}{s-\alpha}\Bigr)^j+h(s),
		\end{equation*}
		for all \(\Re(s)>\alpha\), then 
		\begin{enumerate}
			\item when \(\beta\) is not an integer\(\leq0\), we have as \(t\to\infty\) that
			\begin{equation*}
				A(t)\sim\frac{g_\gamma(\alpha)}{\Gamma(\beta)}e^{\alpha t}t^{\beta-1}(\log t)^{\gamma};
			\end{equation*}
			\item when \(\beta=-m\), with \(m\) an integer\(\geq0\), we have as \(t\to\infty\) that
			\begin{equation*}
				A(t)\sim(-1)^m m!\gamma g_{\gamma}(\alpha)e^{\alpha t}t^{-m-1}(\log t)^{\gamma-1}.
			\end{equation*} 
		\end{enumerate}
	\end{theorem}
	
	\section{Class Field Theory}\label{section:CLF}
	
	For the convenience of the reader, we write down here the main results used from Class Field Theory.
	Let \(K\) be any number field.
	Denote by \(\mathbb{A}_K=\prod_{\mathfrak{p}}'K_{\mathfrak{p}}\) the \emph{ring of ad{\`e}les} of \(K\), and by \(J_K:=\mathbb{A}_K^*\) the \emph{id{\`e}le group} of \(K\).
	There exists a diagonal embedding \(K^*\to J_K\), and we call elements of \(K^*\) the \emph{principal id{\`e}les}.
	The quotient group \(\operatorname{C}_K=J_K/K^*\) is the \emph{id{\`e}le class group} of \(K\).
	There is a map \((\alpha_{\mathfrak{p}})_{\mathfrak{p}}\mapsto\prod_{\mathfrak{p}\nmid\infty}\mathfrak{p}^{v_{\mathfrak{p}}(\alpha_{\mathfrak{p}})}\), whose kernel is \(J_K^{S_\infty}:=\prod_{v\mid\infty}K_v^*\times\prod_{v\nmid\infty}\mathscr{O}_v^*\).
	So, we get the following short exact sequence
	\begin{equation}\label{eqn:CFT 1}
		1\to J_K^{S_\infty}K^*/K^*\to\operatorname{C}_K\to\operatorname{Cl}_K\to1
	\end{equation}
	More generally, for any finite set \(S\) of primes containing infinity, we obtain
	\begin{equation}\label{eqn:CFT 2}
		1\to J_K^SK^*/K^*\to\operatorname{C}_K\to\operatorname{Cl}^S_K\to1.
	\end{equation}
	by the similar idea.
	Using the above short exact sequence, we have the following result.
	\begin{lemma}
		If \(S\) is a finite set of primes containing infinity such that \(\operatorname{Cl}^S_K=1\), then \(J_K^SK^*/K^*\cong\operatorname{C}_K\).
	\end{lemma}
	Since \(J_K^S\) in general is easier to work with, let's consider the following short exact sequence.
	\begin{equation}\label{eqn:CFT 3}
		1\to K^S\to J_K^S\to J_K^SK^*/K^*\to1,
	\end{equation}
	where \(K^S\) is the \(S\)-units of \(K\), which could be defined as \(K^*\cap J_K^S\).
	Since \(J_K^SK^*/K^*=J_K^S/J_K^S\cap K^*\), the notation itself justifies the sequence.
	When \(K=\mathbb{Q}\), we take \(S=S_\infty\), and the short exact sequence says that
	\begin{equation*}
		1\to\{\pm1\}\to J_{\mathbb{Q}}^{S_\infty}\to J_{\mathbb{Q}}^{S_\infty}\mathbb{Q}^*/\mathbb{Q}^*\to1.
	\end{equation*}
	If \(G\) is any finite abelian group, we have
	\begin{equation*}
		1\to\operatorname{Hom}(J_{\mathbb{Q}}^{S_\infty}\mathbb{Q}^*/\mathbb{Q}^*,G)\to\operatorname{Hom}(J_{\mathbb{Q}}^{S_\infty},G)\to\operatorname{Hom}(\{\pm1\},G)\to\cdots
	\end{equation*}
	Since for any map \(\rho:J_{\mathbb{Q}}^{S_\infty}\to G\), either \(\rho(-1)=1\) or \(\rho(-1)\neq1\), we see that a homomorphism \(\rho:\prod_{p}\mathbb{Z}_p^*\to G\) could always be extended to a homomorphism \(\tilde{\rho}:J_{\mathbb{Q}}^{S_\infty}\to G\) such that it is an image of an element of \(\operatorname{Hom}(J_{\mathbb{Q}}^{S_\infty}\mathbb{Q}^*/\mathbb{Q}^*,G)\), by setting up a suitable value of \(\rho_{\infty}(-1)\).
	Clearly such a value is unique.
	In other words, there exists a unique map \(\tilde{\rho}:J_{\mathbb{Q}}^{S_\infty}\mathbb{Q}^*/\mathbb{Q}^*\to G\) such that \(\tilde{\rho}\vert_p=\rho\vert_p\) for all \(p\nmid\infty\).
	So, we have the following result.
	\begin{lemma}\label{lemma:CFT of Q}
		Let \(G\) be an abelian group.
		There is a one-to-one correspondence between \(\operatorname{Hom}(\operatorname{C}_{\mathbb{Q}},G)\) and \(\operatorname{Hom}(\prod_{p\nmid\infty}\mathbb{Z}_p^*,G)\), given by \(\tilde{\rho}\mapsto\prod_{p\nmid\infty}\tilde{\rho}\vert_p\).
		In particular, \(\operatorname{Hom}(\prod_p\mathbb{Z}_p^*,G)\) corresponds to the set of Artin reciprocity maps, i.e., abelian \(G\)-fields, under this map.
	\end{lemma}
	\begin{proof}
		The only thing we need to prove here is that if \(\tilde{\rho}\) is an Artin reciprocity map, then \(\rho:=\prod_{p\nmid\infty}\tilde{\rho}\vert_p\) is already surjective.
		Since \(\tilde{\rho}\) corresponds to a class field \(L\), we know that \(G\cong\operatorname{Gal}(L/\mathbb{Q})\) must be generated by all the inertia subgroups \(I(p)\) where \(p\) runs over all rational primes.
		Using Global-to-Local principal, we know that \(I(p)\) is just given by \(\rho(\mathbb{Z}_p^*)\).
		Therefore, \(\operatorname{im}(\rho)=G\).
		So the image of the set of Artin reciprocity maps under the correspondence \(\tilde{\rho}\mapsto\prod_{p\mid\infty}\tilde{\rho}\vert_p\) is just \(\operatorname{Sur}(\prod_{p\nmid\infty}\mathbb{Z}_p^*,G)\).
	\end{proof}
	\begin{lemma}
		Let \(K\) be a fixed Galois number field with an isomorphism \(\operatorname{Gal}(K/\mathbb{Q})\cong H_1\), and let \(H_2\) be a finite abelian group.
		If \(S\) is a finite set of primes of \(K\) containing the ones at infinity such that \(\mathscr{O}_{K}^S\) has class number \(1\), then there is a one-to-one correspondence between \(\operatorname{Hom}_{H_2}(J_{K}^SK^*/K^*,H_2)\) and \(\operatorname{Hom}_{H_1}(\operatorname{C}_{K},H_2)\), where \(J_K\) is the group of id{\`e}les, and \(\operatorname{C}_K\) is the id{\`e}les class group, and \(H_2\) is equipped with a fixed \(H_1\)-action.
	\end{lemma}
	\begin{proof}
		The correspondence is induced by the isomorphism \(J_K^SK^*/K^*\cong\operatorname{C}_K\).
		Note that since \(\operatorname{C}_K\) is an \(H_1\)-module, we see that \(J_K^SK^*/K^*\) is also a well-defined \(H_1\)-module.
	\end{proof}
	\begin{lemma}
		Let \(K\) be a fixed Galois number field with an isomorphism \(\operatorname{Gal}(K/\mathbb{Q})\cong H_1\), and let \(H_2\) be a finite abelian group, and let \(L/K\) be an \(H_2\)-extension.
		The field extension \(L/\mathbb{Q}\) is Galois with \(G:=\operatorname{Gal}(L/\mathbb{Q})\) if and only if the corresponding Artin reciprocity map \(\rho:\operatorname{C}_{K}\to H_2\) is an \(H_1\)-morphism, in which case \(H_2\) is equipped with the \(H_1\)-action satisfying the short exact sequence
		\begin{equation}\label{eqn:galois action on idele}
			1\to H_2\to G\to H_1\to1.
		\end{equation}
	\end{lemma}
	\begin{proof}
		Let \(L/K\) be an \(H_2\)-extension, and let \(M\) be the Galois closure of \(L/\mathbb{Q}\).
		We treat \(\operatorname{C}_{K},\operatorname{C}_{L}\) as subgroups of \(\operatorname{C}_{M}\).
		
		First assume that \(L/\mathbb{Q}\) is a Galois extension, which means that we already have the short exact sequence (\ref{eqn:galois action on idele}).
		And the action of \(H_1\) on \(H_2\) is exactly the action of conjugation.
		By Class Field Theory we know that \(\operatorname{C}_{K}/N\operatorname{C}_{L}\cong\operatorname{Gal}(L/K)\cong H_2\), where \(N\) means norm \(\operatorname{Nm}_{L/K}\).
		Since \(L\) is Galois, we see that \(N\operatorname{C}_{L}\) is a well-defined \(H_1\)-module, which means that the Artin reciprocity map \(\rho:\operatorname{C}_{K}\to H_2\) is an \(H_1\)-morphism.
		All the \(H_1\)-actions showing up are consistent with the short exact sequence (\ref{eqn:galois action on idele}), which gives the Galois action.
		
		Conversely, let \(\rho:\operatorname{C}_{K}\to H_2\) be the Artin reciprocity map corresponding to \(L/K\).
		If \(L'\neq L\) is a conjugate, then we see that \(N\operatorname{C}_{L}\neq N\operatorname{C}_{L'}\), because they corresponds to different abelian extensions of \(K\).
		In particular, if \(g\in\operatorname{Gal}(M/\mathbb{Q})\) is an element such that \(g L=L'\), then we see that \(g\vert_{K}\) induces the map \(N\operatorname{C}_{L}\to N\operatorname{C}_{L'}\).
		But this already shows that \(\rho\) cannot be an \(H_1\)-morphism.
		And we are done.
	\end{proof}
	In general, we obtain the following results.
	\begin{proposition}\label{prop:CFT obstruction}
		Let \(G\) be a finite abelian group.
		If \(\rho:J_K^{S_\infty}\to G\) be a surjective homomorphism, then there exists at most \(\lvert\operatorname{Hom}(\operatorname{Cl}_K,G)\rvert\) many different Artin reciprocity maps \(\chi:\operatorname{C}_K\to G\) such that \(\chi\vert_{\mathfrak{p}}=\rho\vert_{\mathfrak{p}}\) for all \(\mathfrak{p}\nmid\infty\).
	\end{proposition}
	\begin{proof}
		Using (\ref{eqn:CFT 1}), we see that
		\begin{equation*}
			1\to\operatorname{Hom}(\operatorname{Cl}_K,G)\to\operatorname{Hom}(\operatorname{C}_K,G)\to\operatorname{Hom}(J_K^{S_\infty}K^*/K^*,G)\to\cdots.
		\end{equation*}
		If a homomorphism \(\psi:J_K^{S_\infty}K^*/K^*\) could be lifted to \(\operatorname{Hom}(\operatorname{C}_K,G)\), then size of the preimages is determined by \(\lvert\operatorname{Hom}(\operatorname{Cl}_K,G)\rvert\) according to this long exact sequence.
		Similarly, by (\ref{eqn:CFT 3}), we have
		\begin{equation*}
			1\to\operatorname{Hom}(J_K^{S_\infty}K^*/K^*,G)\to\operatorname{Hom}(J_K^{S_\infty},G)\to\operatorname{Hom}(K^S,G)\to\cdots
		\end{equation*}
		The long exact sequence says that if \(\rho:J_K^{S_\infty}\to G\) is a homomorphism with a lift in \(\operatorname{Hom}(J_K^{S_\infty}K^*/K^*,G)\), then the preimage is unique.
		Combining the two parts, we obtain the result immediately.
	\end{proof}
	Finally, we cite another result from Neukirch here.
	\begin{proposition}\cite[VI.1.4]{neukirch2013algebraic}\label{prop:CFT S-ideles}
		If \(S\) is a sufficiently big finite set of places of \(K\), then \(J_K=J_K^SK^*\), i.e., \(\operatorname{C}_K=J_K^SK^*/K^*\).
	\end{proposition}
\end{appendices}
    
\bibliographystyle{plain}
\bibliography{references}

\begin{thebibliography}{10}

\bibitem{bhargava2013davenport}
Manjul Bhargava, Arul Shankar, and Jacob Tsimerman.
\newblock On the davenport--heilbronn theorems and second order terms.
\newblock {\em Inventiones mathematicae}, 193(2):439--499, 2013.

\bibitem{Davenport1971Cubic}
H.~{Davenport} and H.~{Heilbronn}.
\newblock {On the Density of Discriminants of Cubic Fields. II}.
\newblock {\em Proceedings of the Royal Society of London Series A},
  322(1551):405--420, May 1971.

\bibitem{Delange54}
Hubert Delange.
\newblock G\'en\'eralisation du th\'eor\`eme de {Ikehara}.
\newblock {\em Annales scientifiques de l'\'Ecole Normale Sup\'erieure}, 3e
  s{\'e}rie, 71(3):213--242, 1954.

\bibitem{iwaniec2004analytic}
Henryk Iwaniec and Emmanuel Kowalski.
\newblock {\em Analytic Number Theory}.
\newblock American Mathematical Society Colloquium Publications. American
  Mathematical Society, Providence, Rhode Island, 2004.

\bibitem{montgomery2006multiplicative}
H.L. Montgomery and R.C. Vaughan.
\newblock {\em Multiplicative Number Theory I: Classical Theory}.
\newblock Cambridge Studies in Advanced Mathematics. Cambridge University
  Press, Cambridge, 2006.

\bibitem{narkiewicz2014elementary}
W.~Narkiewicz.
\newblock {\em Elementary and Analytic Theory of Algebraic Numbers}.
\newblock Springer, Berlin Heidelberg, 2014.

\bibitem{neukirch2013algebraic}
J.~Neukirch and N.~Schappacher.
\newblock {\em Algebraic Number Theory}.
\newblock Grundlehren der mathematischen Wissenschaften. Springer, Berlin
  Heidelberg, 2013.

\bibitem{RZ1969ClassRank}
P.~Roquette and H.~Zassenhaus.
\newblock A class rank estimate for algebraic number fields.
\newblock {\em Journal of the London Mathematical Society}, s1-44(1):31--38,
  1969.

\bibitem{Wang2022DistributionOT}
Weitong Wang.
\newblock Distribution of the bad part of the class group.
\newblock 2022.

\bibitem{wood2010probabilities}
Melanie~Matchett Wood.
\newblock On the probabilities of local behaviors in abelian field extensions.
\newblock {\em Compositio Mathematica}, 146(1):102--128, 2010.

\end{thebibliography}

\end{document}